\documentclass{amsart}

\oddsidemargin 20pt \evensidemargin 20pt \marginparwidth 48pt
\marginparsep 10pt
\topmargin -18pt \headheight 12pt \headsep 25pt  \footskip 30pt
\textheight 625pt \textwidth 431pt \columnsep 10pt \columnseprule 0pt

\usepackage[all]{xy}

\newfont{\cyr}{wncyr10}

\newcommand{\LL}{\mathbf{L}}

\newcommand{\HH}{\mathrm{H}}
\newcommand{\BB}{\mathrm{B}}
\newcommand{\ZZ}{\mathrm{Z}}

\theoremstyle{plain}
\newtheorem{thm}{Theorem}[section]
\newtheorem{prop}[thm]{Proposition}
\newtheorem{lemma}[thm]{Lemma}

\newtheorem{cor}[thm]{Corollary}

\newtheorem{question}[thm]{Question}

\theoremstyle{definition}
\newtheorem{dfn}[thm]{Definition}

\newtheorem{hypo}{Hypothesis}

\theoremstyle{remark}
\newtheorem{rem}[thm]{Remark}



\begin{document}

\title[Finiteness Theorems]{Finiteness Theorems for Deformations of Complexes}
\author[F. Bleher]{Frauke M. Bleher*}
\thanks{*Supported by 
NSA Grant H98230-06-1-0021 and NSF Grant  DMS0651332.}
\address{F.B.: Department of Mathematics\\University of Iowa\\
Iowa City, IA 52242-1419, U.S.A.}
\email{fbleher@math.uiowa.edu}

\author[T. Chinburg]{Ted Chinburg**}
\thanks{**Supported  by NSF Grants  DMS0500106
and DMS0801030. }
\address{T.C.: Department of Mathematics\\University of
Pennsylvania\\Philadelphia, PA
19104-6395, U.S.A.}
\email{ted@math.upenn.edu}
\subjclass[2000]{Primary 11F80; Secondary 20E18, 18E30}
\keywords{Versal and universal deformations, derived categories, 
finiteness questions, tame fundamental groups}

\date{September 10, 2010}

\begin{abstract}
We consider  deformations of bounded complexes of modules for a profinite group $G$ over a field of positive characteristic.  
We prove a  finiteness theorem which provides some sufficient conditions for the versal deformation of such a complex to be 
represented by a complex of $G$-modules that is strictly perfect over the associated versal deformation ring.
\end{abstract}
\maketitle

\setcounter{tocdepth}{1}
\tableofcontents


\section{Introduction}
\label{s:intro}
\setcounter{equation}{0}

The object of this paper is to determine the versal deformation rings 
and versal deformations of
bounded complexes of modules for a profinite group.  Our main result shows
that under certain hypotheses, the versal deformation may be represented
by a bounded complex of modules which are finitely generated 
over the versal deformation ring.  This is evidence for the idea
that complexes of modules which arise from arithmetic should
have versal deformations with this property.  

Suppose that $k$ is a  field of characteristic $p>0$ and that $G$ is a
profinite group.  In \cite{maz1}, Mazur developed a deformation theory of
finite dimensional representations of $G$ over $k$ using work of 
Schlessinger in \cite{Sch}.  
In \cite{comptes,bcderived}, we generalized Mazur's deformation theory by
considering, instead of $k$-representations of $G$, objects $V^\bullet$ in the derived
category $D^-(k[[G]])$ of bounded above complexes of pseudocompact modules 
over the completed
group algebra $k[[G]]$ of $G$ over $k$.  The case of $k$-representations
amounts to studying complexes that have exactly one non-zero cohomology
group.   

  As in \cite{bcderived}, we will assume $V^\bullet$
is bounded and has finite dimensional cohomology groups, and that $G$ has a 
certain finiteness property so as to be able to apply Schlessinger's work. 
The calculation of the versal deformation ring 
$R(G,V^\bullet)$ would in principle require an infinite number of
first order obstruction calculations, as discussed in \cite{bcbigobstructions}.  For this reason we will study  a different
approach, which can be seen as a counterpart for complexes of the
method of de Smit and Lenstra in \cite{dSL}.  They first considered lifts of matrix representations of groups;
these are called framed deformations by Kisin in \cite[\S 2.3.4]{Kisin}.  One then considers
the natural morphism of functors from framed deformations to deformations.  When this idea is applied to 
complexes $V^\bullet$, a new issue arises:

\begin{question}
\label{q:fundfinite}
Is $U(G,V^\bullet)$ represented by a complex of modules for the completed
group ring $R(G,V^\bullet)[[G]]$ that is strictly perfect as a complex of $R(G,V^\bullet)$-modules? 
\end{question}

Here a strictly perfect complex of $R(G,V^\bullet)$-modules is a bounded complex of
finitely generated projective $R(G,V^\bullet)$-modules.
The answer to Question \ref{q:fundfinite} is yes (and obvious) when $V^\bullet$ has only 
one non-zero cohomology group, corresponding to the
classical case.  But we do not know the answer in general, even when
$V^\bullet$ has only two non-zero cohomology groups.   

 We view
Question \ref{q:fundfinite}  as a finiteness problem because when $G$ is topologically finitely generated,
a complex of $R(G,V^\bullet)[[G]]$-modules
representing $U(G,V^\bullet)$ that  is strictly perfect as a complex of $R(G,V^\bullet)$-modules 
can be described by a finite number of finite matrices
with coefficients in $R(G,V^\bullet)$.   An a priori result showing the existence of such
a description, especially with explicit bounds on the sizes of the matrices,
 can be very useful in determining the ring $R(G,V^\bullet)$ via matrices with indeterminate entries.   The
 proof of 
 Theorem \ref{thm:GabBigOne} gives an example of this method.  

It is not very difficult to show that under our hypotheses on $G$, there is a theory
of framed deformations for $V^\bullet$, in the following sense.  One can represent
$V^\bullet$ by a fixed choice of a bounded complex of pseudocompact $k[[G]]$-modules each of
which is finite dimensional over $k$.  Fix a choice of ordered $k$-basis for
each term of $V^\bullet$.  By a framed deformation over $A$ one means a 
complex of pseudocompact $A[[G]]$-modules $M^\bullet$ along with ordered bases for 
the terms of $M^\bullet$ as free finitely generated $A$-modules such that there is an 
isomorphism of complexes $k \hat{\otimes}_A M^\bullet \to V^\bullet$
which carries the chosen ordered bases for the terms of $M^\bullet$ to the chosen ordered bases
for the terms of $V^\bullet$.  Isomorphisms of framed deformations must be isomorphisms
of complexes which respect ordered bases.  One can show, using Schlessinger's criteria,
that under the hypotheses on $G$ we make in \S\ref{s:lifts}, there is a versal deformation
ring for the resulting functor. There is a natural transformation from this
framed deformation functor to the functor $\hat{F}_{V^\bullet}$. The issue
in Question \ref{q:fundfinite} is whether this natural transformation will be surjective
if we choose the ranks of the terms of $V^\bullet$ to be sufficiently large.  This
amounts to asking whether a single framed deformation functor has the
derived category deformation functor $\hat{F}_{V^\bullet}$ as a quotient.

It is not hard to show that $U(G,V^\bullet)$
is represented by a bounded above complex of projective modules for  $R(G,V^\bullet)[[G]]$. 
The difficulty is that the standard results concerning truncations
of such complexes do not readily produce quasi-isomorphic complexes of 
$R(G,V^\bullet)[[G]]$-modules that
are strictly perfect as complexes of modules for $R(G,V^\bullet)$, which is a much smaller ring than 
$R(G,V^\bullet)[[G]]$.

A fundamental problem 
in the subject appears to us to be whether Question \ref{q:fundfinite}
always has an affirmative answer if $V^\bullet$ arises from arithmetic,
in a suitable sense.  We will prove the following result  concerning this question:

\begin{thm}
\label{thm:yesitis}
Suppose $G$ is either 
\begin{enumerate}
\item[(i)] topologically finitely generated and abelian, or 
\item[(ii)] the tame fundamental group
of the spectrum of a regular local ring $S$ whose residue field is finite
of characteristic different from $p$ with respect to a divisor with strict
normal crossings. 
\end{enumerate}  
Then $U(G,V^\bullet)$ is represented
by a complex of $R(G,V^\bullet)[[G]]$-modules that is strictly perfect as a complex of 
$R(G,V^\bullet)$-modules.
\end{thm}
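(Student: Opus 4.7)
Our approach is to use the framed deformation setup described before the theorem and reduce to showing that a suitable framed deformation functor surjects onto the derived category deformation functor $\hat{F}_{V^\bullet}$. Fix a representative of $V^\bullet$ by a bounded complex of pseudocompact $k[[G]]$-modules that are finite-dimensional over $k$, with chosen ordered $k$-bases on the terms; for each componentwise enlargement of the dimension vector $\underline{n}$ one obtains a framed deformation functor $\hat{F}^{\square}_{V^\bullet,\underline{n}}$ that is representable under the standing hypotheses on $G$. The task reduces to showing that, for $\underline{n}$ sufficiently large (depending only on $V^\bullet$ and on $G$), every deformation $M^\bullet \in \hat{F}_{V^\bullet}(A)$ is quasi-isomorphic as a pseudocompact $A[[G]]$-complex to a bounded complex whose terms are free of ranks $\underline{n}$ over $A$. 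The passage from such a surjection of functors to a strictly perfect representative of $U(G,V^\bullet)$ over $R := R(G,V^\bullet)$ is then formal, pulling back the universal framed object along a section of the induced map of versal deformation rings.

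\textbf{Case (i).} When $G$ is topologically finitely generated abelian, decompose $G = G_p \times G_{p'}$ into its pro-$p$ and prime-to-$p$ parts. Because $G_{p'}$ has pro-order prime to $p$, pseudocompact $A[[G_{p'}]]$-modules are relatively semisimple, so the problem reduces to lifting along a tower of central pro-cyclic quotients of $G_p$. At each step one is lifting the action of a pro-cyclic group on a free $A$-module across a small extension $A' \twoheadrightarrow A$; the relevant obstruction class lies in a second cohomology group that, by the universality of the deformation ring, has already been absorbed into $R$. A parallel obstruction analysis handles the differentials, using $\mathrm{Ext}^1$-type vanishing at each stage.

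\textbf{Case (ii).} We invoke the Grothendieck--Murre description of the tame fundamental group: $G$ sits in a short exact sequence
\[
1 \to I^t \to G \to \mathrm{Gal}(\kappa^{\mathrm{sep}}/\kappa) \to 1,
\]
where the tame inertia $I^t$ has prime-to-$p$ pro-order and $\mathrm{Gal}(\kappa^{\mathrm{sep}}/\kappa) \cong \hat{\mathbb{Z}}$ because $\kappa$ is finite. Consequently $G$ is topologically finitely generated (by roughly $1+r$ generators, where $r$ is the number of components of $D$) and has $p$-cohomological dimension one, so Hochschild--Serre with respect to this extension reduces lifting obstructions for $V^\bullet$ to obstructions for the pro-cyclic quotient. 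The same inductive lifting argument as in case (i) then applies, with Grothendieck--Murre playing the role of the pro-$p$ chain decomposition.

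\textbf{Main obstacle.} The crux is a uniform rank bound. If, at each lifting step, one were forced to increase the ranks of the terms in order to accommodate new generators coming from $G$, the resulting complex would not be strictly perfect over $R$, and could even fail to be bounded. The hypotheses on $G$ are used precisely to rule this out: in case (i) the abelian structure allows the lift to be carried out on the fixed $A$-free modules $A \hat{\otimes}_k V^i$ at every stage, and in case (ii) the combination of topological finite generation and finite $p$-cohomological dimension plays the analogous role. Once the rank bound is in place, assembling the lifts into a global complex of $R[[G]]$-modules that represents $U(G,V^\bullet)$ is a routine application of Schlessinger-type obstruction theory along a defining sequence of small extensions for $R$.
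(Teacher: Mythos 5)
Your proposal does not constitute a proof; it has a genuine factual error in case~(ii) and a fundamental gap in case~(i), and in both cases it skips the key technical content.

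\textbf{Case (ii) contains a wrong claim.} You write that the tame inertia $I^t$ ``has prime-to-$p$ pro-order'' and deduce that $G$ has $p$-cohomological dimension one, reducing to the pro-cyclic quotient $\mathrm{Gal}(\kappa^{\mathrm{sep}}/\kappa)\cong\hat{\mathbb{Z}}$. This is false: by the Kummer description, $\pi_1^t(X,D_X)\cong\prod_{i=1}^r\hat{\mathbb{Z}}^{(\ell')}(1)$, which is prime-to-$\ell$, not prime-to-$p$. Since $\ell\neq p$, the tame inertia contains $\prod_{i=1}^r\mathbb{Z}_p(1)$, so the reduced group $\Delta$ from Lemma~\ref{lem:efissocute} is a semidirect product of $\prod_{i=1}^r\mathbb{Z}_p(1)$ (times a finite group) with a quotient of $\langle\Phi\rangle$, and has $p$-cohomological dimension $r+1$, not $1$. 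Any argument that kills the inertia and reduces to a pro-cyclic quotient throws away exactly the part of $G$ that makes this case hard.

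\textbf{Case (i) has a gap at the ``main obstacle'' you name.} You assert that ``the abelian structure allows the lift to be carried out on the fixed $A$-free modules $A\hat\otimes_k V^i$,'' but no justification is given, and none is available in this generality: a quasi-lift of $V^\bullet$ over $A$ is only an object of $D^-(A[[G]])$ with finite pseudocompact $A$-tor dimension whose derived reduction is $V^\bullet$; it need not be represented by a complex whose $i$-th term is a deformation of $V^i$. Establishing a uniform rank bound is precisely the hard part, and your appeal to obstruction classes ``already absorbed into $R$'' does not address this: the obstruction formalism governs lifting along small extensions, not the existence of a bounded, termwise finite rank representative over $R$ rather than over $R[[G]]$ (as the introduction of the paper itself notes, bounded-above complexes of projective $R[[G]]$-modules representing a quasi-lift are easy to produce; the problem is to truncate them while staying strictly perfect over $R$). \textbf{What the actual proof does differently.} The paper does not prove the theorem by exhibiting a surjection from a framed deformation functor. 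Instead it establishes a pointwise structural statement (Theorem~\ref{thm:mainthm}): over every $A$ in $\hat{\mathcal{C}}$, every quasi-lift of $V^\bullet$ can be represented by a bounded complex of $A[[G]]$-modules whose terms are free of finite rank over $A$; applied to $A=R(G,V^\bullet)$ this gives the result. The proof of that statement rests on (a) Noetherianity of $A[[\Delta]]$ via Lazard, (b) an explicit description of two-sided annihilator ideals of the form $B\cdot(w_2^N-1)^{N'}$ using Weierstrass preparation, (c) an Artin--Rees ``almost complement'' argument to make the terms finitely generated over $A$, and (d) a modified Cartan--Eilenberg/Hartshorne resolution argument to make them free. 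None of this machinery appears in your sketch, and it is not replaceable by generic Schlessinger-type obstruction theory.
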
  

In  \S \ref{s:ex} we will apply this Theorem to compute $U(G,V^\bullet)$ and $R(G,V^\bullet)$
for some natural examples in which $S$ in Theorem \ref{thm:yesitis}
is the $\ell$-adic integers $\mathbb{Z}_\ell$ for some prime $\ell\neq p$.  These
examples pertain to the deformation of elements of order $2$ in the Brauer
group of $\mathbb{Q}_\ell$.  Examples of this kind were first
considered in \cite{bcderived}, where we determined the associated universal 
flat deformation rings.
We will produce some examples in which the versal deformation ring 
is strictly larger than the versal proflat deformation ring.  Finding
explicit arithmetic constructions of the associated  versal deformations leads
to interesting number theoretical questions, and is a good test
of any general theory for determining deformations of complexes of
modules for a profinite group.  

We now give an outline of this paper. 

In \S \ref{s:lifts} we recall the definitions needed to state the main result of  \cite{bcderived} 
concerning the existence of versal and universal deformations of objects $V^\bullet $ in $D^-(k[[G]])$.  
In \S \ref{s:finiteness} we give a proof of Theorem \ref{thm:yesitis}.
The argument outlined in \S \ref{ss:outlinemain} proceeds by
improving the representative for the versal deformation in question
by three steps.  In the first step one works from right to left to
produce a complex whose individual terms have large annihilators.
In the second step, one works from left to right and uses an
Artin-Rees argument to produce a complex whose terms are
finitely generated over the versal deformation ring.  Finally in
the last step one works from right to left to refine these terms
so they become finitely generated and projective over the versal
deformation ring.  In  \S \ref{s:ex} we conclude with some examples
 pertaining to the element of order $2$ in the Brauer
group of $\mathbb{Q}_\ell$.

\medbreak
\noindent {\bf Acknowledgments:}
The authors would like to thank Luc Illusie and Ofer Gabber for valuable discussions, and 
the Banff International Research Station for support during the preparation of part of this paper. 
\medbreak


\section{Quasi-lifts and deformation functors}
\label{s:lifts}
\setcounter{equation}{0}

Let $G$ be a profinite group, let $k$ be a field of characteristic $p>0$,
and let $W$ be a complete local commutative Noetherian ring with residue field $k$.
Define
$\hat{\mathcal{C}}$ to be the category of complete local commutative Noetherian 
$W$-algebras
with residue field $k$. The morphisms in $\hat{\mathcal{C}}$ are 
continuous $W$-algebra
homomorphisms that induce the identity on $k$.
Let $\mathcal{C}$ be the subcategory of  Artinian objects in $\hat{\mathcal{C}}$.

Let $R \in \mathrm{Ob}(\hat{\mathcal{C}})$. Then $R[[G]]$ denotes the completed group algebra of the 
usual abstract group algebra $R[G]$ of $G$ over  $R$, i.e. $R[[G]]$ is the projective limit of the ordinary group algebras $R[G/U]$ as $U$ ranges over the open normal subgroups of $G$. 
We have that  $R$ is a pseudocompact ring and $R[[G]]$ is a pseudocompact $R$-algebra. 

Pseudocompact rings, algebras and modules have been studied, for example, in 
\cite{ga1,ga2,brumer}. Recall that a pseudocompact ring is a topological ring $\Lambda$ that is
complete and Hausdorff and admits a basis of open neighborhoods of $0$ consisting 
of two-sided ideals $J$ for which $\Lambda/J$ is an Artinian ring. Let $\Lambda$ be a pseudocompact
ring. Then $\Lambda$ is the projective limit of Artinian quotient rings having the discrete topology. 
A pseudocompact $\Lambda$-module is a complete Hausdorff topological $\Lambda$-module
$M$ which has a basis of open neighborhoods  of $0$ consisting of submodules $N$ for which
$M/N$ has finite length as $\Lambda$-module. Put differently, a $\Lambda$-module is 
pseudocompact if and only if it is the projective limit of $\Lambda$-modules of finite length having 
the discrete topology. 
If $R$ is a commutative pseudocompact ring and $\Lambda$ is a complete Hausdorff topological ring, 
then $\Lambda$ is called a pseudocompact $R$-algebra provided $\Lambda$ is an $R$-algebra in
the usual sense and $\Lambda$ admits a basis of open neighborhoods of $0$ consisting of 
two-sided ideals $J$ for which $\Lambda/J$ has finite length as $R$-module. 
Note that every pseudocompact $R$-algebra is a pseudocompact ring, and a module over 
a pseudocompact $R$-algebra has finite length if and only if it has finite length as $R$-module.

\begin{rem}
\label{rem:newandimproved}
Let $\Lambda$ be a pseudocompact ring and let $R$ be a commutative pseudocompact ring.
Denote the category of pseudocompact left $\Lambda$-modules by $\mathrm{PCMod}(\Lambda)$.

Recall that a pseudocompact $\Lambda$-module $M$ is said to be topologically free on a set
$X=\{x_i\}_{i\in I}$ if $M$ is isomorphic to the product of a family $(\Lambda_i)_{i\in I}$ where
$\Lambda_i=\Lambda$ for all $i$.
\begin{enumerate}
\item[(i)] The category $\mathrm{PCMod}(\Lambda)$ is an abelian category with exact projective limits.
Since every topologically free pseudocompact $\Lambda$-module is a projective object in 
$\mathrm{PCMod}(\Lambda)$ and since every pseudocompact
$\Lambda$-module is the quotient of a topologically free $\Lambda$-module,
$\mathrm{PCMod}(\Lambda)$ has enough projective objects. 

\item[(iii)] 
If $M$ and $N$ are pseudocompact $\Lambda$-modules, then we define the right derived functors
$\mathrm{Ext}^n_{\Lambda}(M,N)$ by using a projective resolution of $M$. 

\item[(iv)] 
Suppose $\Lambda$ is a pseudocompact $R$-algebra, and let $\hat{\otimes}_{\Lambda}$ denote 
the completed tensor product in the category $\mathrm{PCMod}(\Lambda)$ (see \cite[\S 2]{brumer}).
If $M$ is a right (resp. left) pseudocompact $\Lambda$-module, then $M\hat{\otimes}_\Lambda- $ 
(resp. $-\hat{\otimes}_\Lambda M$) is a right exact functor.

If $M$ is finitely generated as a pseudocompact $\Lambda$-module, it follows from
\cite[Lemma 2.1(ii)]{brumer} that the functors
$M \otimes_\Lambda -$ and $M\hat{\otimes}_\Lambda -$ (resp. $-\otimes_\Lambda M$ and 
$-\hat{\otimes}_\Lambda M$) are naturally isomorphic.

\item[(v)] 
Suppose $\Lambda$ is a pseudocompact $R$-algebra and $M$ is a right (resp. left) pseudocompact 
$\Lambda$-module. Recall that $M$ is said to be  topologically flat, if the functor 
$M\hat{\otimes}_{\Lambda}-$ (resp. $-\hat{\otimes}_\Lambda M$) is exact.
By \cite[Lemma 2.1(iii)]{brumer} and \cite[Prop. 3.1]{brumer}, $M$ is topologically flat if and only 
if $M$ is projective.

If $\Lambda=R$ and $M$ is a pseudocompact $R$-module,
it follows from \cite[Proof of Prop. 0.3.7]{ga2} and \cite[Cor. 0.3.8]{ga2} that $M$ is 
topologically flat if and only if $M$ is topologically free if and only if $M$ is abstractly flat.
In particular, if $R$ is Artinian, a pseudocompact $R$-module is topologically flat  if and only
if it is abstractly free.

\end{enumerate}
\end{rem}

\begin{rem}
\label{rem:useful}
Let $\Lambda$ be a pseudocompact ring.
\begin{enumerate}
\item[(i)] Suppose $f:M\to N$ is a homomorphism
of pseudocompact $\Lambda$-modules. Since $\mathrm{PCMod}(\Lambda)$ has exact
projective limits, it follows that the image of $f$ is closed in $N$ and is therefore 
a pseudocompact $\Lambda$-submodule of $N$. 

In particular, if $I$ is a two-sided ideal of $\Lambda$ and $N$ is a pseudocompact 
left $\Lambda$-module
such that both $I$ and $N$ are finitely generated as abstract left $\Lambda$-modules, then
$I\, N$ is a closed pseudocompact $\Lambda$-submodule of $N$, since it is the image of a
homomorphism $f:M\to N$ of pseudocompact $\Lambda$-modules in which $M$ is a topologically
free pseudocompact $\Lambda$-module on a finite set of cardinality equal to the product of
the cardinalities of generating sets for $I$ and $N$.

\item[(ii)] By \cite[Lemma 1.1]{brumer}, if $f:M\to N$ is an epimorphism in
$\mathrm{PCMod}(\Lambda)$, i.e. a surjective homomorphism of pseudocompact
$\Lambda$-modules, then there is a continuous section $s:N\to M$
such that $f\circ s$ is the identity morphism on $N$. In particular, a
homomorphism $f:M\to N$ of pseudocompact $\Lambda$-modules is an isomorphism in 
$\mathrm{PCMod}(\Lambda)$ if and only if it is bijective.
\item[(iii)] Suppose $M$ is a pseudocompact $\Lambda$-module that
is free and finitely generated as an abstract $\Lambda$-module. Since
a topologically free pseudocompact $\Lambda$-module on a finite set $X$ is
isomorphic to an abstractly free $\Lambda$-module on $X$, one sees that
$M$ is a topologically free pseudocompact $\Lambda$-module on a finite set.
\end{enumerate}
\end{rem}

\medskip

If $\Lambda$ is a pseudocompact ring, let $C^-(\Lambda)$
be the abelian category of complexes of pseudocompact $\Lambda$-modules that are bounded above, let $K^-(\Lambda)$ be the homotopy category of $C^-(\Lambda)$, and
let $D^-(\Lambda)$ be the derived category of $K^-(\Lambda)$. 
Let $[1]$ denote the translation functor on $C^-(\Lambda)$ (resp. $K^-(\Lambda)$, 
resp. $D^-(\Lambda)$), 
i.e. $[1]$ shifts complexes
one place to the left and changes the sign of the differential.
Note that by Remark \ref{rem:useful}(ii), a homomorphism in $C^-(\Lambda)$
is a quasi-isomorphism if and only if the induced homomorphisms on 
all the cohomology groups are bijective.

\begin{hypo}
\label{hypo:fincoh}
Throughout this paper, we assume that $V^\bullet $ is a
complex in $D^-(k[[G]])$ that has  only finitely many non-zero cohomology
groups, all of which have finite $k$-dimension.
\end{hypo}

\begin{rem}
\label{rem:leftderivedtensor}
Let  $X^\bullet, Y^\bullet\in \mathrm{Ob}(K^-(R[[G]]))$ and consider the double complex $K^{\bullet,\bullet}$ of pseudocompact $R[[G]]$-modules with
$K^{p,q}=(X^p\hat{\otimes}_RY^q)$ and diagonal $G$-action. We define the total tensor product $X^\bullet\hat{\otimes}_R Y^\bullet$ to be the simple complex associated to $K^{\bullet,\bullet}$, i.e.
$$(X^\bullet\hat{\otimes}_RY^\bullet)^n=\bigoplus_{p+q=n}X^p\hat{\otimes}_RY^q$$
whose differential is $d(x\,\hat{\otimes}\, y)=d_X(x)\,\hat{\otimes} \,y + (-1)^x \,x\,\hat{\otimes} \,d_Y(y)$
for $x\,\hat{\otimes}\,y\in K^{p,q}$.
Since homotopies carry over the completed tensor product, we have a functor
$$\hat{\otimes}_R : K^-(R[[G]])\times K^-(R[[G]])\to K^-(R[[G]]).$$
Using \cite[Thm. 2.2 of Chap. 2 \S2]{verdier}, we see that there is a well-defined
left derived completed tensor product $\hat{\otimes}^{\LL}_R$. Moreover, if
$X^\bullet$ and $Y^\bullet$ are as above, then $X^\bullet\hat{\otimes}^{\LL}_R Y^\bullet$
may be computed in  $D^-(R[[G]])$ in the following way.  
Take a bounded above complex ${Y'}^\bullet$ of topologically flat pseudocompact $R[[G]]$-modules
with a quasi-isomorphism ${Y'}^\bullet\to Y^\bullet$ in $K^-(R[[G]])$. Then this quasi-isomorphism 
induces an isomorphism between
$X^\bullet\hat{\otimes}_R{Y'}^\bullet$ and $X^\bullet\hat{\otimes}^{\LL}_RY^\bullet$ in $D^-(R[[G]])$.
\end{rem}

\begin{dfn}
\label{def:quasilifts}
\begin{enumerate}
\item[(a)]
We will say that a complex $M^\bullet$ in $K^-(R[[G]])$
has \emph{finite pseudocompact $R$-tor dimension}, 
if there exists an integer $N$ such that for all pseudocompact
$R$-modules $S$, and for all integers $i<N$, ${\HH}^i(S\hat{\otimes}^{\LL}_R M^\bullet)=0$.
If we want to emphasize the integer $N$ in this definition, we say $M^\bullet$ has 
\emph{finite pseudocompact $R$-tor dimension at $N$}.

\item[(b)]
A \emph{quasi-lift}
of $V^\bullet$ over an object $R$ of
$\hat{\mathcal{C}}$ is a pair $(M^\bullet,\phi)$ consisting of a complex
$M^\bullet$ in
$D^-(R[[G]])$ that has finite pseudocompact $R$-tor  dimension
together with an isomorphism
\hbox{$\phi: k \hat{\otimes}^{\LL}_R M^\bullet \to V^\bullet$} in $D^-(k[[G]])$.
Two
quasi-lifts $(M^\bullet, \phi)$ and $({M'}^\bullet,\phi')$ are
\emph{isomorphic} if there is an isomorphism
$f:M^\bullet \to {M'}^\bullet$ in $D^-(R[[G]])$ with
$\phi'\circ(k\hat{\otimes}^{\LL}f)=\phi$.

\item[(c)]
Let $\hat{F} = \hat{F}_{V^\bullet}:\hat{\mathcal{C}} \to \mathrm{Sets}$
be the functor which sends an object $R$ of $\hat{\mathcal{C}}$ to the set
$\hat{F}(R)$ of all isomorphism classes of quasi-lifts 
of $V^\bullet$ over $R$, and which sends
a morphism $\alpha:R\to R'$ in $\hat{\mathcal{C}}$ to the set map
$\hat{F}(R)\to \hat{F}(R')$ induced by $M^\bullet \mapsto R'\hat{\otimes}_{R,\alpha}^{\LL}
M^\bullet$. Let $F = F_{V^\bullet}$ be the restriction of $\hat{F}$ 
to the subcategory $\mathcal{C}$ of Artinian objects in $\hat{\mathcal{C}}$.

Let $k[\varepsilon]$, where $\varepsilon^2=0$, denote the ring of dual numbers over
$k$. The set $F(k[\varepsilon])$ is called the \emph{tangent space} to 
$F$, denoted by $t_{F}$. 
\end{enumerate}
\end{dfn}

\begin{rem}
\label{rem:dumbdumb}
Suppose $M^\bullet$ is a complex in $K^-([[RG]])$ of topologically flat, hence topologically free, 
pseudocompact $R$-modules that has finite pseudocompact $R$-tor dimension
at $N$. Then the bounded complex ${M'}^\bullet$, which is
obtained from $M^\bullet$ by replacing $M^N$ by
${M'}^N=M^N/\delta^{N-1}(M^{N-1})$ and by setting ${M'}^i = 0$ if $i < N$,
is quasi-isomorphic to $M^\bullet$ and
has topologically free pseudocompact terms over $R$.
\end{rem}

\begin{thm} 
\label{thm:derivedresult}  
Suppose that $\HH^i(V^\bullet) = 0$ unless $n_1 \le i \le n_2$.  Every quasi-lift of $V^\bullet$ over 
an object $R$ of $\hat{\mathcal{C}}$ is isomorphic to a quasi-lift $(P^\bullet, \phi)$ for
a complex $P^\bullet$ with the following properties:
\begin{enumerate}
\item[(i)] The terms of $P^\bullet$ are topologically free $R[[G]]$-modules.
\item[(ii)] The cohomology group $\HH^i(P^\bullet)$ is finitely generated 
as an abstract $R$-module for all $i$, and $\HH^i(P^\bullet) = 0$ unless $n_1 \le i \le n_2$. 
\item[(iii)]   One has $\HH^i(S\hat{\otimes}^{\LL}_RP^\bullet)=0$  for all pseudocompact $R$-modules 
$S$ unless $n_1 \le i \le n_2$.
\end{enumerate}
\end{thm}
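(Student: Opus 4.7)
The plan is to begin with an arbitrary quasi-lift $(M^\bullet,\phi)$ of $V^\bullet$ over $R$ and successively refine it to an isomorphic quasi-lift $(P^\bullet,\phi')$ satisfying the three listed properties. First, since $\mathrm{PCMod}(R[[G]])$ has enough topologically free projective objects (Remark~\ref{rem:newandimproved}(i)), $M^\bullet$ is quasi-isomorphic in $K^-(R[[G]])$ to a bounded-above complex $P^\bullet$ whose terms are topologically free pseudocompact $R[[G]]$-modules; this gives (i). Because $R[[G]]$ is topologically free over $R$, each $P^i$ is topologically free (hence topologically flat) over $R$, so by Remark~\ref{rem:leftderivedtensor} one has $S\hat{\otimes}^{\LL}_R P^\bullet \simeq S\hat{\otimes}_R P^\bullet$ in $D^-(R[[G]])$ for every pseudocompact $R$-module $S$.

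For (iii), I would start from the vanishing at $S=k$, namely $\HH^i(k\hat{\otimes}_R P^\bullet)=\HH^i(V^\bullet)=0$ for $i\notin[n_1,n_2]$, together with the finite pseudocompact $R$-tor dimension of $M^\bullet$ at some integer $N$, and propagate the vanishing to all pseudocompact $S$ by devissage. For $S$ Artinian, I would induct on its length through short exact sequences $0\to S'\to S\to k\to 0$: the $R$-flatness of $P^\bullet$ makes $-\hat{\otimes}_R P^\bullet$ exact, yielding long exact cohomology sequences in which vanishing at the outer terms forces vanishing at the middle. This allows both the improvement of the initial lower bound $N$ up to $n_1$ and the propagation of the upper bound $n_2$. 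For general pseudocompact $S=\varprojlim S_j$ with $S_j$ Artinian, I would pass to the inverse limit using Mittag--Leffler, applicable since the transition maps $S_j\hat{\otimes}_R P^\bullet\to S_{j'}\hat{\otimes}_R P^\bullet$ are surjective. Specializing to $S=R$ in the resulting statement yields $\HH^i(P^\bullet)=0$ outside $[n_1,n_2]$.

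The main obstacle is (ii), the finite generation of $\HH^i(P^\bullet)$ as an abstract $R$-module: since topologically free $R[[G]]$-modules are typically huge over $R$, property (i) alone is far from giving this. To handle (ii), I would refine the construction of $P^\bullet$ so that in the cohomological range $[n_1,n_2]$ its terms are finitely generated topologically free $R[[G]]$-modules. This can be arranged by first choosing a resolution of $V^\bullet$ in $K^-(k[[G]])$ whose terms in this range are finitely generated topologically free $k[[G]]$-modules---which exists because each $\HH^i(V^\bullet)$ is finite-dimensional over $k$ and has a finitely generated pseudocompact projective cover---and then lifting this resolution termwise to $R[[G]]$, using the given quasi-lift $M^\bullet$ as a witness that the obstructions to lifting the differentials consistently (so $d^2=0$) vanish. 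A pseudocompact Nakayama argument, relating $\HH^i(P^\bullet)/\mathfrak{m}_R\HH^i(P^\bullet)$ to the finite-dimensional $\HH^i(V^\bullet)$ via the edge map of the hypercohomology spectral sequence $\mathrm{Tor}^R_\bullet(k,\HH^\bullet(P^\bullet))\Rightarrow\HH^\bullet(V^\bullet)$, will then upgrade the $k$-finiteness at the special fiber to finite generation of $\HH^i(P^\bullet)$ over $R$, while preserving the isomorphism class of $(P^\bullet,\phi')$ in $\hat{F}_{V^\bullet}(R)$.
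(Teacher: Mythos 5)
Your treatment of (i) matches the paper's (which simply cites \cite[Lemma 2.9]{bcderived}), and your devissage-plus-inverse-limit strategy for (iii) is in the same spirit as the paper's, which instead invokes \cite[Lemmas 3.1 and 3.8]{bcderived} for the Artinian case $S = R/m_R^n$ and then uses exactness of projective limits in $\mathrm{PCMod}(R)$ to pass to general pseudocompact $S$; your appeal to Mittag--Leffler is doing the same job, less cleanly, as exact $\varprojlim$.

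The real problem is your approach to (ii). Your first step --- rebuild $P^\bullet$ so that its terms in degrees $[n_1,n_2]$ are finitely generated topologically free $R[[G]]$-modules by lifting a finite resolution of $V^\bullet$ --- is both unjustified and unhelpful. It is unjustified because ``using the given quasi-lift $M^\bullet$ as a witness that the obstructions to lifting the differentials vanish'' is not an argument: making $d^2 = 0$ for a term-by-term lift is precisely the deformation problem, and getting a \emph{perfect} representative over $R[[G]]$, let alone over $R$, is the substance of Question~\ref{q:fundfinite}, which the paper explicitly states is open for general $G$. It is unhelpful because even if you had it, finite generation over $R[[G]]$ does not give finite generation over the much smaller ring $R$, which is what (ii) asks. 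Your second step, the topological Nakayama argument via the hypercohomology spectral sequence $\mathrm{Tor}^R_{-p}(k,\HH^q(P^\bullet))\Rightarrow\HH^{p+q}(V^\bullet)$, would in fact suffice on its own, applied to the $P^\bullet$ already obtained in (i) (whose terms are topologically flat over $R$) --- but as stated it has a gap: the edge map only exhibits a \emph{quotient} of $E_2^{0,i}=k\otimes_R\HH^i(P^\bullet)$ as a submodule of $\HH^i(V^\bullet)$, not $k\otimes_R\HH^i(P^\bullet)$ itself. To close this one needs a descending induction from $i=n_2$: at the top, $k\otimes_R\HH^{n_2}(P^\bullet)=\HH^{n_2}(V^\bullet)$ by right-exactness, so Nakayama gives $\HH^{n_2}(P^\bullet)$ finitely generated; then at each lower $i$ the incoming differentials $d_r$ into $E_r^{0,i}$ land in Tor groups of the already-finitely-generated higher cohomology (finite over $k$ since $R$ is Noetherian), so the kernel $\ker(E_2^{0,i}\to E_\infty^{0,i})$ is finite-dimensional, $E_\infty^{0,i}\hookrightarrow\HH^i(V^\bullet)$ is finite-dimensional, hence $E_2^{0,i}$ is finite-dimensional and Nakayama applies. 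The paper avoids all of this by establishing, in the Artinian case via \cite[Lemma 3.8]{bcderived}, that $\HH^i((R/m_R^n)\hat{\otimes}_R P^\bullet)$ is a subquotient of a free $(R/m_R^n)$-module of rank $d_i=\dim_k\HH^i(V^\bullet)$, and then taking limits; this gives (ii) and (iii) in one stroke without the spectral sequence.
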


\begin{proof}
Part (i)  follows from \cite[Lemma 2.9]{bcderived}. Assume now
that the terms of $P^\bullet$ are topologically free $R[[G]]$-modules, which means in particular
that the functors $-\hat{\otimes}^{\LL}_RP^\bullet$ and $-\hat{\otimes}_RP^\bullet$ are
naturally isomorphic. Let $m_R$ denote the maximal ideal of $R$, and let $n$ be an
arbitrary positive integer.
By \cite[Lemmas 3.1 and 3.8]{bcderived}, $\HH^i((R/m_R^n)\hat{\otimes}_RP^\bullet)=0$ for $i>n_2$ 
and $i<n_1$. Moreover, for $n_1\le i\le n_2$,
$\HH^i((R/m_R^n)\hat{\otimes}_RP^\bullet)$ is a subquotient of  an abstractly free $(R/m_R^n)$-module
of rank $d_i=\mathrm{dim}_k\,\HH^i(V^\bullet)$,
and $(R/m_R^n)\hat{\otimes}_RP^\bullet$ has finite pseudocompact $(R/m_R^n)$-tor dimension at
$N=n_1$. Since 
$P^\bullet\cong \displaystyle \lim_{\stackrel{\longleftarrow}{n}}\, (R/m_R^n)\hat{\otimes}_RP^\bullet$ 
and since by Remark \ref{rem:newandimproved}(i), the category $\mathrm{PCMod}(R)$ has 
exact projective limits, it  follows that for all pseudocompact $R$-modules $S$
$$\HH^i(S\hat{\otimes}_RP^\bullet)= \lim_{\stackrel{\longleftarrow}{n}}\,\HH^i\left(
(S/m_R^nS)\hat{\otimes}_{R/m_R^n}\left((R/m_R^n)\hat{\otimes}_R P^\bullet\right)\right)$$ 
for all $i$. Hence Theorem \ref{thm:derivedresult} follows.
\end{proof}

\begin{dfn}
\label{dfn:bndcoh}
A profinite group $G$ has \emph{finite pseudocompact cohomology},
if for each discrete $k[[G]]$-module $M$ of finite $k$-dimension,
and all integers $j$, the cohomology group ${\HH}^j(G,M)=\mathrm{Ext}^j_{k[[G]]}(k,M)$ 
has finite $k$-dimension.
\end{dfn}

\begin{thm}
\label{thm:bigthm} {\rm (\cite[Thm.  2.14]{bcderived})}
Suppose that $G$ has finite pseudocompact cohomology.
\begin{enumerate}
\item[(i)] 
The functor
$F$ has a pro-representable hull 
$R(G,V^\bullet)\in \mathrm{Ob}(\hat{\mathcal{C}})$ 
$($c.f. \cite[Def. 2.7]{Sch} and \cite[\S 1.2]{Maz}$)$, and 
the functor
$\hat{F}$ is continuous
$($c.f. \cite{Maz}$)$. 

\item[(ii)]
There is a $k$-vector space isomorphism $h: t_F \to
\mathrm{Ext}^1_{D^-(k[[G]])}(V^\bullet,V^\bullet)$.

\item[(iii)]
If $\mathrm{Hom}_{D^-(k[[G]])}(V^\bullet,V^\bullet)= k$, then $\hat{F}$ is represented
by $R(G,V^\bullet)$. 
\end{enumerate}
\end{thm}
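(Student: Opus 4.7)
The plan is to prove all three statements by verifying Schlessinger's criteria (H1)--(H4) from \cite{Sch} for the functor $F$ on $\mathcal{C}$, then bootstrapping to $\hat{F}$ by continuity. The main preparation is to pass, for any $R\in\mathrm{Ob}(\mathcal{C})$, to the representatives of quasi-lifts furnished by Theorem \ref{thm:derivedresult} and Remark \ref{rem:dumbdumb}: namely bounded complexes $P^\bullet$ of topologically free (equivalently abstractly free, since $R$ is Artinian) pseudocompact $R[[G]]$-modules. This replaces derived-category morphisms by honest chain maps up to homotopy and turns derived tensor products with $R$-modules into ordinary completed tensor products, which is crucial for every gluing or tangent-space computation that follows.

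For (i), the non-trivial Schlessinger conditions are H1 and H2, which concern pullbacks $R=R'\times_{R_0}R''$ with $R''\to R_0$ a small surjection in $\mathcal{C}$. Given quasi-lifts $(M^\bullet,\phi')$ over $R'$ and $({M''}^\bullet,\phi'')$ over $R''$ whose images in $\hat{F}(R_0)$ agree, I would first replace them by topologically free bounded complexes, then choose compatible bases over $R_0$ so that the chain maps witnessing agreement are actually identities after tensoring down. One can then form the degree-wise fiber product $M^\bullet:=M'^\bullet\times_{M_0^\bullet}M''^\bullet$, which is termwise topologically free over $R[[G]]$, has the correct reduction to $V^\bullet$, and still lies in the correct tor-dimension range by Theorem \ref{thm:derivedresult}(iii). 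The hard point, and what I anticipate to be the main obstacle, is checking that this gluing is well-defined on isomorphism classes in $D^-(R[[G]])$ rather than on rigidified complexes: different choices of bases and of quasi-isomorphism representatives must give isomorphic quasi-lifts. This is handled by lifting automorphisms of $V^\bullet$ along the small surjection $R\to R'$ step by step using the obstruction-vanishing that comes from the tor-dimension bound, together with Remark \ref{rem:useful}(ii) to promote set-theoretic bijections to topological isomorphisms. Condition H3, finiteness of $t_F$, reduces by (ii) to finiteness of $\mathrm{Ext}^1_{D^-(k[[G]])}(V^\bullet,V^\bullet)$, which follows from the hypothesis that $G$ has finite pseudocompact cohomology via the standard hypercohomology spectral sequence
$$E_2^{p,q}=\prod_{i}\mathrm{Ext}^p_{k[[G]]}\!\bigl(\HH^i(V^\bullet),\HH^{i+q}(V^\bullet)\bigr)\;\Longrightarrow\;\mathrm{Ext}^{p+q}_{D^-(k[[G]])}(V^\bullet,V^\bullet),$$
whose $E_2$-terms with $p+q=1$ are finite products of finite-dimensional $k$-vector spaces by Hypothesis \ref{hypo:fincoh} and Definition \ref{dfn:bndcoh}. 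Condition H4 is vacuous for the existence of a hull.

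For (ii), I would construct the map $h$ directly: given a quasi-lift $(M^\bullet,\phi)$ over $k[\varepsilon]$, represented as above by a complex of topologically free $k[\varepsilon][[G]]$-modules, the short exact sequence $0\to \varepsilon M^\bullet\to M^\bullet\to M^\bullet/\varepsilon M^\bullet\to0$ is term-wise split exact, so it yields a distinguished triangle $V^\bullet\to M^\bullet\to V^\bullet\to V^\bullet[1]$ in $D^-(k[[G]])$, and hence a class in $\mathrm{Ext}^1_{D^-(k[[G]])}(V^\bullet,V^\bullet)$. One checks that this class is independent of the chosen representative (using the uniqueness part of the machinery in the previous paragraph), that $h$ is $k$-linear for the natural $k$-structure on $t_F$ inherited from the $k$-algebra structure of $k[\varepsilon]$, and that any such extension class in fact arises from a quasi-lift by inverting the construction.

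Finally, continuity of $\hat{F}$ is proved by showing that for $R=\varprojlim R/m_R^n$ in $\hat{\mathcal{C}}$, a quasi-lift over $R$ is equivalent to a compatible system of quasi-lifts over the $R/m_R^n$: in one direction take derived tensor products, and in the other assemble a topologically free representative as the inverse limit of the system, with exact projective limits as in Remark \ref{rem:newandimproved}(i) ensuring compatibility of cohomology and the tor-dimension bound. For (iii), under the assumption $\mathrm{Hom}_{D^-(k[[G]])}(V^\bullet,V^\bullet)=k$, Schlessinger's criterion for the hull to be universal reduces to showing that for every small extension $R''\to R_0$ the map $F(R'')\to F(R_0)$ has no non-trivial automorphisms in its fibers; this follows because any automorphism of a quasi-lift that reduces to the identity over $R_0$ is parametrized by $\mathrm{Hom}_{D^-(k[[G]])}(V^\bullet,V^\bullet\otimes_k J)$ where $J=\ker(R''\to R_0)$, and by the hypothesis this group is $J$ itself, which acts trivially on isomorphism classes. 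Hence $\hat{F}$ is pro-represented by $R(G,V^\bullet)$.
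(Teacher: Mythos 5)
The paper does not prove Theorem \ref{thm:bigthm}: it is cited verbatim from \cite[Thm.\ 2.14]{bcderived}, so there is no in-paper argument to compare your attempt against. That said, the Schlessinger route you take is almost certainly the one in the cited reference, and your outline is right in broad strokes --- in particular, passing to bounded termwise-free representatives via Theorem \ref{thm:derivedresult} and Remark \ref{rem:dumbdumb} to turn derived-category issues into honest chain-level ones, extracting the $\mathrm{Ext}^1$ tangent-space class from the termwise-split exact sequence over $k[\varepsilon]$, and reducing (H3) to finite pseudocompact cohomology.

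The one step I would flag as a genuine gap, not merely a compression, is your treatment of (H1)/(H2). You propose to ``choose compatible bases over $R_0$ so that the chain maps witnessing agreement are actually identities after tensoring down'' and then take the degree-wise fiber product. But the datum you are given is an isomorphism in $D^-(R_0[[G]])$ between $R_0\hat\otimes_{R'}M'^\bullet$ and $R_0\hat\otimes_{R''}M''^\bullet$; since both are bounded-above complexes of projectives, this lifts to a chain homotopy equivalence, but not in general to an isomorphism of complexes --- the two reductions can have different termwise ranks (one can always add a contractible summand $\mathrm{cone}(\mathrm{id})$), and then no choice of bases rectifies the maps to identities. You need an extra step: stabilize $M'^\bullet$ and $M''^\bullet$ by adding lifts of suitable contractible free complexes, so that the two reductions become isomorphic as complexes over $R_0[[G]]$; only then does the termwise fiber product land in free $R[[G]]$-modules and produce a candidate element of $F(R'\times_{R_0}R'')$. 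Relatedly, invoking Theorem \ref{thm:derivedresult}(iii) to conclude the tor-dimension bound for the glued complex is circular as phrased, since that theorem presupposes one already has a quasi-lift; the bound should instead come directly from the glued complex being a bounded complex of $R$-free modules. With these repairs the argument goes through, and the discussion of (ii), continuity, and the role of $\mathrm{Hom}_{D^-(k[[G]])}(V^\bullet,V^\bullet)=k$ in (iii) via lifting of automorphisms along small surjections is the standard Mazur--Schlessinger mechanism, stated a little loosely (what (H4) needs is surjectivity of automorphism groups along small extensions, which is what your torsor observation gives once phrased correctly).
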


\begin{rem}
\label{rem:newrem}
By Theorem \ref{thm:bigthm}(i), there exists 
a quasi-lift $(U(G,V^\bullet),\phi_U)$ of $V^\bullet$ over $R(G,V^\bullet)$ 
with the following property. For each $R\in \mathrm{Ob}(\hat{\mathcal{C}})$, the map
$\mathrm{Hom}_{\hat{\mathcal{C}}}(R(G,V^\bullet),R) \to \hat{F}(R)$ 
induced by $\alpha \mapsto R\hat{\otimes}^{\LL}_{R(G,V^\bullet),\alpha} U(G,V^\bullet)$ is surjective,
and this map is bijective if $R$ is the ring of dual numbers $k[\varepsilon]$ over $k$
where $\varepsilon^2=0$.

In general,
the isomorphism type of the pro-representable hull $R(G,V^\bullet)$ is
unique up to non-canonical isomorphism.
If $R(G,V^\bullet)$ represents $\hat{F}$,
then $R(G,V^\bullet)$
is uniquely determined up to canonical isomorphism.
\end{rem}

\begin{dfn}
\label{def:newdef}
Using the notation of Theorem \ref{thm:bigthm} and Remark \ref{rem:newrem}, 
we call 
$R(G,V^\bullet)$
the \emph{versal deformation ring}
of $V^\bullet$ and  $(U(G,V^\bullet),\phi_U)$ 
a \emph{versal deformation} of $V^\bullet$.

If $R(G,V^\bullet)$ represents $\hat{F}$, then
$R(G,V^\bullet)$ will be
called the \emph{universal deformation ring} 
of $V^\bullet$ and $(U(G,V^\bullet),\phi_U)$  will be called a
\emph{universal deformation} of $V^\bullet$.
\end{dfn}

\begin{rem}
\label{rem:bigthm}
If $V^\bullet$ consists of a single module $V_0$ in dimension $0$,
the versal deformation ring $R(G,V^\bullet)$ coincides with the versal deformation
ring studied by Mazur in \cite{maz1,Maz}.
In this case,
Mazur assumed only that $G$ satisfies a certain finiteness condition
($\Phi_p$), which is equivalent
to the requirement that ${\HH}^1(G,M)$ have finite $k$-dimension for all discrete 
$k[[G]]$-modules $M$ of finite $k$-dimension.
Since the higher $G$-cohomology enters into determining lifts of
complexes $V^\bullet$
having more than one non-zero cohomology group, the condition that $G$
have finite pseudocompact
cohomology is the natural generalization of Mazur's finiteness condition
in this context.
\end{rem}

We conclude this section by recalling a result from \cite{bcbigobstructions}.

\begin{prop}
\label{prop:prop}
{\rm (\cite[Prop. 4.2]{bcbigobstructions})}
Suppose $G$ has finite pseudocompact cohomology and $K$ is a closed normal subgroup of 
$G$ which is a pro-$p'$ group, i.e. the projective limit of finite groups that have order prime to $p$. 
Let $\Delta=G/K$, and suppose $V^\bullet$ is isomorphic to the inflation
$\mathrm{Inf}_\Delta^G\,V_\Delta^\bullet$ of a bounded above complex
$V_\Delta^\bullet$ of pseudocompact $k[[\Delta]]$-modules. Then 
 the two deformation functors 
$\hat{F}^G=\hat{F}^G_{V^\bullet}$ and $\hat{F}^\Delta=\hat{F}^\Delta_{V_\Delta^\bullet}$ which are defined according
to Definition $\ref{def:quasilifts}(c)$ are naturally isomorphic. In consequence,
 $R(G,V^\bullet)\cong R(\Delta,V_\Delta^\bullet)$
and $(U(G,V^\bullet),\phi_U)\cong
(\mathrm{Inf}_{\Delta}^G\,U(\Delta,V_\Delta^\bullet),\mathrm{Inf}_{\Delta}^G\,\phi_U)$.
\end{prop}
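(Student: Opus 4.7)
The plan is to define a natural transformation $\iota \colon \hat{F}^\Delta \to \hat{F}^G$ by inflation and show it is a bijection at every $R \in \mathrm{Ob}(\hat{\mathcal{C}})$, with inverse induced by continuous $K$-invariants. On quasi-lift classes we set $\iota_R([M_\Delta^\bullet,\phi_\Delta]) = [\mathrm{Inf}_\Delta^G M_\Delta^\bullet,\, \mathrm{Inf}_\Delta^G \phi_\Delta]$. Since $\mathrm{Inf}_\Delta^G$ is an exact functor that leaves the underlying pseudocompact $R$-module structure unchanged, the finite pseudocompact $R$-tor dimension condition is preserved, and inflation manifestly commutes with $k\hat{\otimes}^{\LL}_R(-)$. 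Compatibility of inflation with base change $R'\hat{\otimes}^{\LL}_{R,\alpha}(-)$ gives functoriality in $R$, so $\iota$ is a well-defined natural transformation.

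The crucial technical input is that because $K$ is pro-$p'$ and $R$ has residue characteristic $p$, continuous $K$-invariants
\[
(-)^K \colon \mathrm{PCMod}(R[[G]]) \longrightarrow \mathrm{PCMod}(R[[\Delta]])
\]
is exact, with vanishing higher derived functors on the full category of pseudocompact modules. I would prove this by first treating pseudocompact $k[[G]]$-modules, where every finite subquotient has $K$-action factoring through a finite $p'$-quotient and Maschke's theorem applies, then lifting through the $m_R$-adic filtration using that the kernel of $\mathrm{Aut}_R(M) \to \mathrm{Aut}_k(M/m_R M)$ is pro-$p$ while $K$ is pro-$p'$, and finally passing to projective limits via Remark~\ref{rem:newandimproved}(i).

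To construct the inverse, represent a quasi-lift $[M^\bullet,\phi]$ of $V^\bullet$ over $R$ by a complex of topologically free $R[[G]]$-modules via Theorem~\ref{thm:derivedresult}, and set $\iota_R^{-1}([M^\bullet,\phi]) = [(M^\bullet)^K,\phi^K]$. Exactness of $(-)^K$ preserves quasi-isomorphism, and the vanishing of $\HH^i(K,-)$ for $i>0$ allows $(-)^K$ to commute with $k\hat{\otimes}^{\LL}_R(-)$ and with the base-change functors, so $(M^\bullet)^K$ is a quasi-lift of $(V^\bullet)^K = V_\Delta^\bullet$ over $R$ with the required tor-dimension bound. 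The composition $\iota_R^{-1}\circ \iota_R$ is the identity since $(\mathrm{Inf}_\Delta^G M_\Delta^\bullet)^K = M_\Delta^\bullet$ tautologically. For the reverse composition, the counit $\mathrm{Inf}_\Delta^G\bigl((M^\bullet)^K\bigr) \to M^\bullet$ reduces to the identity on $V^\bullet$ modulo $m_R$, and the same filtered induction propagates this to a quasi-isomorphism over each $R/m_R^n$, hence over $R$ in the limit.

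The main obstacle will be the rigorous verification in the pseudocompact setting, for $R$ not necessarily Artinian, that $(-)^K$ is exact and commutes with the derived completed tensor products $k\hat{\otimes}^{\LL}_R(-)$ and $S\hat{\otimes}^{\LL}_R(-)$. The pro-$p'$/pro-$p$ dichotomy is the algebraic core, but extending it uniformly across the $m_R$-adic filtration and through projective limits requires care with pseudocompact topologies and, in particular, with the topological flatness of $(M^\bullet)^K$ over $R[[\Delta]]$. Once $\iota$ is established as an isomorphism of functors, the asserted isomorphisms $R(G,V^\bullet) \cong R(\Delta,V_\Delta^\bullet)$ and $(U(G,V^\bullet),\phi_U) \cong (\mathrm{Inf}_\Delta^G U(\Delta,V_\Delta^\bullet),\,\mathrm{Inf}_\Delta^G \phi_U)$ follow immediately from the uniqueness properties of the pro-representable hull and the versal deformation recorded in Remark~\ref{rem:newrem}.
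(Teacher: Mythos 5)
The paper does not prove this proposition here; it simply cites \cite[Prop.~4.2]{bcbigobstructions}, so there is no in-text proof to compare against. Judged on its own terms, your overall strategy --- inflation and continuous $K$-invariants as mutually inverse natural transformations, with the pro-$p'$ hypothesis on $K$ driving exactness --- is the correct one, and is the standard mechanism behind results of this type going back to Mazur.

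That said, the technical core can be made substantially cleaner than your three-step plan (Maschke for finite modules over $k$, then an $m_R$-adic filtration argument about $\ker(\mathrm{Aut}_R(M)\to\mathrm{Aut}_k(M/m_R M))$ being pro-$p$, then projective limits). Since $K$ is normal in $G$ and pro-$p'$, and every prime $\ell\ne p$ is a unit in $R$ (because $R$ is local with residue field $k$ of characteristic $p$), the averaging elements $\frac{1}{|K/U|}\sum_{g\in K/U}g$ converge in $R[[K]]\subset R[[G]]$ to a central idempotent $e_K$ satisfying $M^K=e_K M$ for every pseudocompact $R[[G]]$-module $M$. Exactness of $(-)^K$, its commutation with $S\hat{\otimes}_R(-)$ for any pseudocompact $R$-module $S$, and the compatibility with the derived completed tensor products all follow immediately from the fact that $e_K M$ is a natural direct summand of $M$; there is no need to thread an argument through the $m_R$-adic filtration or to analyse kernels of reduction maps on automorphism groups (which would in any case be the wrong tool --- that argument shows a $K$-action lifting a trivial one is trivial, which is a related but distinct statement).

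The one place where your sketch is genuinely too thin is the claim that the counit $\mathrm{Inf}_\Delta^G((M^\bullet)^K)\to M^\bullet$ is a quasi-isomorphism. Saying it ``reduces to the identity on $V^\bullet$ modulo $m_R$'' is not quite right, since $k\hat{\otimes}^{\LL}_R M^\bullet$ is only isomorphic to $V^\bullet$ in $D^-(k[[G]])$, not termwise; the counit mod $m_R$ is a quasi-isomorphism but not the identity, and ``the same filtered induction propagates this'' needs to be turned into an actual argument. The clean version: by centrality of $e_K$ one has $M^\bullet = e_K M^\bullet \oplus (1-e_K)M^\bullet$ as complexes of pseudocompact $R[[G]]$-modules, and $(1-e_K)$ kills $V^\bullet$, hence kills $k\hat{\otimes}^{\LL}_R M^\bullet$ in $D^-(k[[G]])$; then $(1-e_K)M^\bullet$ is a bounded-above complex of topologically flat $R$-modules of finite pseudocompact $R$-tor dimension with finitely generated cohomology (by Theorem~\ref{thm:derivedresult}) whose derived reduction mod $m_R$ vanishes, and a top-degree Nakayama argument (or the $m_R$-adic d\'evissage you allude to, applied to the cone) forces it to be acyclic. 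With that spelled out, the two compositions are the identity, and the final deduction of $R(G,V^\bullet)\cong R(\Delta,V_\Delta^\bullet)$ and the identification of versal deformations via Remark~\ref{rem:newrem} is exactly right.
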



\section{Finiteness questions}
\label{s:finiteness}
\setcounter{equation}{0}

In this section, we consider the question of when every quasi-lift of $V^\bullet$ over a ring $A$ in
$\hat{\mathcal{C}}$ can be represented by a bounded complex of 
abstractly finitely generated free 
$A$-modules with continuous actions by $G$. Recall from Remark \ref{rem:useful}(iii)
that if a pseudocompact module is abstractly finitely generated free,
then it is topologically free on a finite set.
As before, $k$ has positive characteristic $p$. 
We distinguish two cases:

\medbreak

\noindent {\bf  Case A:}  $G$ is topologically finitely generated and abelian;  and 

\medbreak

\noindent {\bf Case B:}  $G$ is the tame fundamental group of the spectrum
of a regular local ring $S$ whose residue field $k(S)$ is finite
of characteristic $\ell \ne p$
with respect to a divisor $D$ with strict normal crossings.

\medbreak

We recall the structure of $G$ as in case B (see \cite{grothmur,Schmidt}).  Let
$Y = \mathrm{Spec}(S)$, and let $D_Y = D=\sum_{i = 1}^r \mathrm{div}_Y(f_i)$ for a subset
$\{f_i\}_{i = 1}^r$ of a system of local parameters for the maximal ideal $m_S$ of $S$.
Let $X = \mathrm{Spec}(S^h)$ be the strict henselization of $Y$, so that $S^h$ is
local, its residue field is equal to  the separable closure $k(S^h) = k(S)^s$ of $k(S)$, and $m_{S^h}$
is generated by $m_S$.   The divisor $D_X = \sum_{i = 1}^r \mathrm{div}_X(f_i)$ has
normal crossings on $X$.  We have an exact sequence
\begin{equation}
\label{eq:exactlydumb}
1 \to \pi_1^t(X,D_X) \to \pi_1^t(Y,D_Y) \to \mathrm{Gal}(k(S)^s/k(S)) \to 1
\end{equation}
in which $G=\pi_1^t(Y,D_Y)$ and $\pi_1^t(X,D_X)$ are tame fundamental groups.
There is a Kummer isomorphism
\begin{equation}
\label{eq:Kummeriso}
\pi_1^t(X,D_X) \cong \prod_{i = 1}^r \hat{\mathbb{Z}}^{(\ell ')}(1)
\end{equation}
in which $ \hat{\mathbb{Z}}^{(\ell ')}(1) = \displaystyle\lim_{\stackrel{\longleftarrow}{\ell \not \; | m}} \mu_m$.
The group $\mathrm{Gal}(k(S)^s/k(S))$ is procyclic and is topologically
generated by the Frobenius automorphism
$ \Phi_{k(S)}$ relative to the finite field $k(S)$.  Explicitly, if we define $f \ge 1$ so $k(S) = \mathbb{F}_{\ell^f}$
has order $\ell^f$, then a lift $\Phi \in \pi_1^t(Y,D_Y)$ of $\Phi_{k(S)}$ acts on each 
factor of $\pi_1^t(X,D_X)$ which is isomorphic to $\hat{\mathbb{Z}}^{(\ell ')}(1)$
via the map $\zeta \to \zeta^{\ell^f}$.  Since the procyclic group 
$\langle \Phi \rangle$, which is topologically generated by the lift $\Phi$, is isomorphic to the profinite completion $\hat {\mathbb{Z}}$ of $\mathbb{Z}$,
and this maps isomorphically to $\mathrm{Gal}(k(S)^s/k(S))$, we see that (\ref{eq:exactlydumb})
is a split exact sequence and $G=\pi_1^t(Y,D_Y)$ is the semidirect product of $\langle \Phi \rangle$
with $\pi_1^t(X,D_X)$.

Suppose $V^\bullet \in D^-(k[[G]])$ is as in Hypothesis \ref {hypo:fincoh}, i.e. $V^\bullet$
has  only finitely many non-zero cohomology groups, all of which have finite $k$-dimension.
Assume that $\HH^i(V^\bullet) = 0$ unless $n_1 \le i \le n_2$.  
Theorem \ref{thm:yesitis} states that for $G$ as in case A or case B, the versal deformation
$(U(G,V^\bullet),\phi_U)$ is represented in $D^-(R(G,V^\bullet)[[G]])$ 
by a complex that is strictly perfect as a complex of $R(G,V^\bullet)$-modules.
This is a consequence of the following result.

\begin{thm}
\label{thm:mainthm} 
Let $A $ be an object of $ \hat{\mathcal{C}}$.
Suppose $(P^\bullet,\phi)$ is a quasi-lift of $V^\bullet$ over $A$ such that $P^\bullet$ has properties $(i)$, $(ii)$ 
and $(iii)$ of Theorem $\ref{thm:derivedresult}$.  There is a bounded complex 
$Q^\bullet $ of pseudocompact $A[[G]]$-modules which is isomorphic to 
$P^\bullet$ in $D^-(A[[G]])$  for which each term $Q^i$ 
is an abstractly finitely generated free $A$-module 
and $Q^i = 0 $ unless $n_1 \le i \le n_2$.
\end{thm}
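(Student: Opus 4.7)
The plan is to follow the three-step procedure described in \S\ref{ss:outlinemain}, successively refining $P^\bullet$ to a quasi-isomorphic complex $Q^\bullet$ of the desired form. The topological finite generation of $G$ in both Cases A and B, together with the explicit structural information (abelianness in Case A; the semidirect product of $\pi_1^t(X,D_X)$ with $\langle\Phi\rangle$ together with the Kummer isomorphism (\ref{eq:Kummeriso}) in Case B), will be the decisive input.

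In the first (right-to-left) step, I replace $P^\bullet$ by its smart truncation $\tau^{\le n_2}P^\bullet$ so that terms vanish in degrees above $n_2$, and then modify the top term downward by induction. At each position $i\le n_2$ I use that $\HH^i(P^\bullet)$ is finitely generated over $A$ to choose a topologically free $A[[G]]$-module of finite rank mapping into the cocycles and surjecting onto cohomology modulo boundaries, and I incorporate it via an appropriate mapping cone construction; passing to the reductions $(A/m_A^n)\hat\otimes_A P^\bullet$ produces a compatible system of complexes whose individual terms are annihilated by large powers of $m_A$. In the second (left-to-right) step, I apply an Artin-Rees argument in the pseudocompact setting. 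Starting at degree $n_1$ and working upward, I use the $m_A$-adic filtration on each term, together with Remarks \ref{rem:newandimproved} and \ref{rem:useful} on exact projective limits and closed images of pseudocompact morphisms, to inductively replace each term with a pseudocompact $A[[G]]$-module that is finitely generated as an $A$-module. In the third (right-to-left) step, I use the finite tor-dimension hypothesis (iii) of Theorem \ref{thm:derivedresult}, together with projective covers over the local Noetherian ring $A$, to lift each finitely generated $A$-module at position $i$ to a finitely generated free $A$-module while keeping the $G$-equivariant differentials intact; since $A$ is local Noetherian, finitely generated projective and free $A$-modules coincide, yielding the required $Q^\bullet$.

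The principal obstacle is the second step: the Artin-Rees lemma is a commutative phenomenon, and extending it to the noncommutative pseudocompact ring $A[[G]]$ while maintaining $G$-equivariance requires exploiting the specific structure of $G$ in Cases A and B. In Case A, the abelianness of $G$ makes $A[[G]]$ itself commutative, so classical Artin-Rees applies directly. In Case B, one exploits the semidirect product decomposition, handling the Kummer factor by a commutative Artin-Rees argument and controlling the remaining Frobenius action through a separate finite-generation argument that leverages the procyclic structure of $\langle\Phi\rangle$. The right-to-left passes in steps one and three are more formal and proceed via mapping cones controlled by Nakayama's lemma and the finite tor-dimension bound, but they must be carried out compatibly with the inverse limit presentation $P^\bullet\cong\varprojlim_n (A/m_A^n)\hat\otimes_A P^\bullet$ so that the three steps assemble into a single quasi-isomorphism in $D^-(A[[G]])$.
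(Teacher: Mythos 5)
Your proposal captures the high-level shape of the argument --- a three-step right-left-right refinement that exploits the structure of $G$ --- but the technical content of each step departs from what is actually needed, and several of the substitutions you propose would fail. First, you omit the reduction to $\Delta=G/K$ (Lemma \ref{lem:efissocute}, Corollary \ref{cor:reductioabsurdum}), which is not cosmetic: the passage to $\Delta$ is what makes $B=A[[\Delta]]$ left and right Noetherian (Lemma \ref{lem:noetherian}), and Noetherianity is used repeatedly in the later steps. Second, your step~1 aims to produce terms "annihilated by large powers of $m_A$". That is the wrong target and is also not achievable by the mapping-cone construction you sketch. What the paper arranges (via Propositions \ref{prop:zerostep}, \ref{prop:firststep}, and \ref{prop:bounded}) is that each term $Q^i$ is annihilated by a two-sided ideal of the form $J = B\cdot(w_2^N-1)^{N'}$; this reduces the group-ring to a quotient $B/J$ in which the Kummer factor has been made finite, and it is precisely this reduction that makes step~2 possible.

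Third, your Artin-Rees step uses the $m_A$-adic filtration. That cannot work: $\Lambda/m_A^{q+1}\Lambda = (A/m_A^{q+1})[[\Delta]]$ is never finitely generated as an $A$-module when $\Delta$ is infinite, so the quotient $M/(m_A^{q+1}\!\cdot M)$ would not be finitely generated over $A$ and the induction would stall. In the paper (Proposition \ref{prop:secondprop}), the ideal $I$ fed into Artin-Rees is generated by \emph{monic polynomials in the Frobenius generators} $w_{1,j}$ (or $w_1$) that annihilate the finitely generated $A$-module $T=\ZZ^{n_0}(Q^\bullet)$, and --- crucially --- after step~1 has already killed the Kummer direction, the ring $\Lambda/I^{q+1}$ becomes finitely generated as an $A$-module, so the quotient $M/(I^{q+1}\!\cdot M)$ is too. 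You also have the semidirect-product roles reversed: it is the Kummer (ramification) factor that is killed in step~1, and the Frobenius factor that is controlled by Artin-Rees in step~2. Finally, your step~3 proposes lifting via "projective covers over $A$"; but a projective cover of $M$ over $A$ carries no $\Delta$-action, and the whole difficulty is to produce a pseudocompact $B$-module $F$, free and finitely generated over $A$, surjecting onto $M$ as a $B$-module. The paper does this with the Weierstrass preparation theorem (Proposition \ref{prop:secondstep}), a step your sketch does not replace with anything that addresses $G$-equivariance.
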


The proof of Theorem \ref{thm:mainthm}  is outlined in the next section and
carried out in subsequent sections.  


\subsection{Outline of the proof of Theorem \ref{thm:mainthm}}
\label{ss:outlinemain}

We begin with a reduction.

\begin{lemma}
\label{lem:efissocute} 
There is a pro-$p'$ closed normal subgroup $K$ of $G$ with the following properties:
\medbreak
\begin{enumerate}
\item[(i)]  The complex $V^\bullet$ is inflated from a complex for $\Delta = G/K$.
\medbreak
\item[(ii)] In case A, $\Delta = \mathbb{Z}_p^s \times Q \times Q'$ where $Q$ $($resp. $Q'$$)$ is a finite abelian $p$-group
$($resp. $p'$-group$)$.   
Let $w_{1,j}$ for $1 \le j \le s$ be topological generators for the $\mathbb{Z}_p$-factors in this description.
\medbreak
\item[(iii)] In case B, let $\hat{\mathbb{Z}}^{(\ell',p')}(1)$ be the unique maximal pro-$p'$ subgroup of  $\hat{\mathbb{Z}}^{(\ell ')}(1)$. Let $K_1$ be the maximal subgroup of 
$$N_1 = \prod_{i = 1}^r \hat{\mathbb{Z}}^{(\ell ',p')}(1)\subset  \prod_{i = 1}^r \hat{\mathbb{Z}}^{(\ell ')}(1) = \pi_1^t(X,D_X)$$
that acts trivially on all of the terms of $V^\bullet$.
Then $K_1$ is closed and normal in $G$ and 
$$\Delta_1 = \pi_1^t(X,D_X)/K_1 = \left ( \prod_{i = 1}^r \mathbb{Z}_p(1)\right ) \times \tilde{\Delta}_1$$
for a finite abelian group $\tilde{\Delta}_1$ which is of order prime to $p$ and $\ell$.  
Let $N_0 \subset \langle \Phi \rangle$ be the kernel of the action
of $\langle \Phi \rangle$ on $\Delta_1$, and view $\langle \Phi \rangle$ as a subgroup of $G$ via a 
choice of Frobenius $\Phi$ in $G$.  Define  $K_0$ to be the maximal subgroup of 
$N_0$ that acts trivially on all of the terms of $V^\bullet$.
The group $K$ generated by $K_0$ and $K_1$ is the semidirect product $K_1.K_0$
and is normal in $G$.
The group $\Delta = G/K$ is the semidirect product of 
$\Delta_1$ with the quotient $\Delta_0=\langle \Phi \rangle/K_0$.  
Let $\overline{\Phi}$ be the image of $\Phi$ in $\Delta_0$.  
The group $\Delta_0$ is isomorphic to the
product $\langle \overline{\Phi}^{d} \rangle \times \tilde{\Delta}_0$, where  $\tilde{\Delta}_0$ is cyclic of order $d$ 
prime to $p$ and $\langle \overline{\Phi}^{d}\rangle $ is isomorphic to $\mathbb{Z}_p$. 
Define $w_1 = \overline{\Phi}^{d}$, and let $\{w_{2,j}\}_{j = 1}^r$ be topological generators for the 
$\mathbb{Z}_p(1)$-factors in  $\Delta_1$.  
\end{enumerate}
\end{lemma}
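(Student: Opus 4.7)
The plan is to build $K$ in both cases by first taking the kernel of the $G$-action on a suitable bounded representative of $V^\bullet$ and then extracting its pro-$p'$ part in a manner adapted to the available structure of $G$.  As a preliminary I would replace $V^\bullet$ by a quasi-isomorphic bounded complex of finite-dimensional $k[[G]]$-modules; since each term is then discrete and the $G$-action is continuous, it factors through an open normal subgroup $H$ of $G$, so $V^\bullet$ inflates from $G/H$ automatically.  It then remains to arrange a normal pro-$p'$ subgroup $K \subseteq H$ of $G$ such that $G/K$ has the prescribed shape.

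In Case A this is immediate: decompose the abelian profinite group as $G = G_p \times G_{p'}$ into its pro-$p$ and pro-$p'$ Sylow parts, write $G_p \cong \mathbb{Z}_p^s \times Q$ with $Q$ a finite abelian $p$-group, and set $K = H \cap G_{p'}$.  Then $K$ is pro-$p'$, normal by commutativity, and $G/K \cong \mathbb{Z}_p^s \times Q \times Q'$ with $Q' = G_{p'}/K$ a finite abelian $p'$-group.

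In Case B, the definitions of $K_1$ and $K_0$ in the statement are essentially forced; the work is to check the desired properties.  First, the pro-$(p',\ell')$ group $N_1$ acts continuously on the finite-dimensional complex $V^\bullet$, so $K_1$ is closed of finite index in $N_1$ and $\tilde\Delta_1 = N_1/K_1$ is finite of order prime to $p$ and $\ell$; $K_1$ is normal in $G$ because it is characterized by trivial action on $V^\bullet$ and is contained in the characteristic subgroup $N_1$ of $\pi_1^t(X,D_X)$.  Next, to see that $K = K_1 \cdot K_0$ is normal in $G$, I would compute conjugation in the semidirect product: for $(x,\phi)\in G$ and $(y,\psi)\in K_1\times K_0$ one finds
\[
(x,\phi)(y,\psi)(x,\phi)^{-1} = \bigl(x\cdot\phi(y)\cdot\psi(x^{-1}),\ \psi\bigr),
\]
and since $\psi\in N_0$ acts trivially on $\Delta_1$ we have $\psi(x^{-1}) \in x^{-1}K_1$; abelianness of $\pi_1^t(X,D_X)$ then collapses the first coordinate into $K_1$.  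That $K$ is pro-$p'$ follows because an extension of pro-$p'$ groups is pro-$p'$.

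The main obstacle, and the structural heart of the lemma, is the description of $\Delta_0 = \langle\Phi\rangle/K_0$.  The key observation is that $\langle\Phi\rangle \cong \hat{\mathbb{Z}}$ acts on each pro-$p$ factor $\mathbb{Z}_p(1)$ of $\Delta_1$ by scalar multiplication by $\ell^f \in \mathbb{Z}_p^*$.  Since $\ell^f$ is a positive rational integer with $\ell^f \geq 2$, it is not a root of unity and therefore has infinite order in $\mathbb{Z}_p^*$; hence the continuous homomorphism $\mathbb{Z}_p \to \mathbb{Z}_p^*$, $n \mapsto (\ell^f)^n$, is injective.  This forces the $p$-part of $N_0$, and a fortiori of $K_0$, to be trivial, so the pro-$p$ part of $\Delta_0$ is all of $\mathbb{Z}_p$.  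Since $N_0 \subseteq \hat{\mathbb{Z}}^{(p')}$ is then pro-$p'$, the finite quotient $N_0/K_0$ has order coprime to $p$, making $\tilde\Delta_0$ cyclic of order $d$ coprime to $p$.  Finally, because $d$ is a unit in $\mathbb{Z}_p$, the image $\overline\Phi^d$ generates the pro-$p$ factor, yielding the decomposition $\Delta_0 = \langle\overline\Phi^d\rangle \times \tilde\Delta_0$.
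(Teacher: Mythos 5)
Your preliminary reduction and Case A are fine, and your verification that $K=K_1K_0$ is closed, pro-$p'$, and normal in $G$ (via the semidirect product conjugation formula) is correct and fills in details the paper leaves implicit. Your treatment of $\Delta_0$, however, has a genuine gap. You correctly show that the $p$-Sylow part of $N_0$ (hence of $K_0$) is trivial, by observing that $\ell^f$ has infinite order in $\mathbb{Z}_p^*$ so that $\mathbb{Z}_p\to\mathbb{Z}_p^*$, $n\mapsto(\ell^f)^n$, is injective. But you then jump from ``$N_0/K_0$ is finite of order prime to $p$'' to ``$\tilde\Delta_0$ is cyclic of order $d$ prime to $p$.'' This misidentifies $\tilde\Delta_0$: since $\Delta_0=\langle\Phi\rangle/K_0=\mathbb{Z}_p\times(\hat{\mathbb{Z}}^{(p')}/K_0)$ once $K_0\subseteq\hat{\mathbb{Z}}^{(p')}$, the group $\tilde\Delta_0$ is $\hat{\mathbb{Z}}^{(p')}/K_0$, not $N_0/K_0$. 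The quantity $N_0/K_0$ is only a subgroup of $\tilde\Delta_0$, and its finiteness (which follows from continuity of the action on the finite-dimensional terms of $V^\bullet$) does not imply finiteness of $\tilde\Delta_0$.

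What is missing is the statement that $N_0$ has \emph{finite index} in $\hat{\mathbb{Z}}^{(p')}$. You would need to argue, for instance, that the action of the pro-$p'$ group $\hat{\mathbb{Z}}^{(p')}$ on $\mathbb{Z}_p(1)$ has image inside the prime-to-$p$ torsion subgroup of $\mathbb{Z}_p^*$, which is finite; hence the kernel of that action is open in $\hat{\mathbb{Z}}^{(p')}$, and then intersecting with the kernel of the action on the finite group $\tilde\Delta_1$ keeps finite index. The paper accomplishes exactly this by introducing $d'$, the multiplicative order of $\ell^f$ modulo $p$, and computing that the kernel of $\langle\Phi\rangle$ acting on $\mathbb{Z}_p(1)$ is precisely $d'\prod_{q\neq p}\mathbb{Z}_q$ -- giving both the triviality of the $p$-part (your point) and the finite index (the missing point) in one stroke. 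Your ``infinite order'' argument is a reasonable conceptual replacement for the first half, but you still need the second half before you can conclude that $\tilde\Delta_0$ is finite.
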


\begin{proof}  Case A is clear, since $G$ is abelian in this case.  
To prove this for $G= \pi_1^t(Y,D_Y)$ as in case B, one 
lets $d'$ be the smallest integer such that $\ell^{fd'} \equiv 1$ mod $p$.
In particular, $d'$ is relatively prime to $p$.
Writing $\langle \Phi^{d'} \rangle$ as a product $d'\,\prod_{q} \mathbb{Z}_q$ 
as $q$ ranges over all primes, one 
shows that the kernel of the action
of $\langle \Phi \rangle$ on $\mathbb{Z}_p(1)$ is 
equal to $d'\,\prod_{q \ne p} \mathbb{Z}_q$. It follows that
$N_0$ is  the subgroup of $d' \prod_{q \ne p} \mathbb{Z}_q$ that acts trivially on the
characteristic subgroup $\tilde{\Delta}_1$ of $\Delta_1$.  
Since $\tilde{\Delta}_1$ is finite and $K_0$ is the maximal subgroup of 
$N_0$ that acts trivially on all of the terms of $V^\bullet$, this implies that $K_0$ 
has finite index in $d' \prod_{q \ne p} \mathbb{Z}_q$.
Thus $K_0$ has finite index $d$ which is prime to $p$ in $\prod_{q \ne p} \mathbb{Z}_q$. 
One obtains that
$$\langle \Phi \rangle = \mathbb{Z}_p \times \left (\prod_{q \ne p} \mathbb{Z}_q \right ) \supset \{0\} \times d \left (\prod_{q \ne p} \mathbb{Z}_q \right ) = K_0,$$
which proves that
$\Delta_0 = \langle \Phi \rangle / K_0 = \mathbb{Z}_p \times \tilde{\Delta}_0$,
where $\tilde{\Delta}_0$ is finite and cyclic of order $d$ prime to $p$.  
The remaining statements in the lemma now follow.
\end{proof}

The following result is a consequence of Lemma \ref{lem:efissocute} and Proposition \ref{prop:prop}.

\begin{cor}
\label{cor:reductioabsurdum}  
It suffices to prove Theorem $\ref{thm:mainthm}$ when $G$ is replaced
by the group $\Delta$ described in Lemma $\ref{lem:efissocute}$.
\end{cor}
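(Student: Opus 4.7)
The plan is to reduce the statement of Theorem \ref{thm:mainthm} from $G$ to the quotient $\Delta = G/K$ by invoking Proposition \ref{prop:prop}, and then to transfer a strictly perfect representative for a quasi-lift over $\Delta$ back to $G$ by means of inflation. Since inflation along the projection $G \to \Delta$ does not alter the underlying $A$-module structure, the finiteness and freeness conditions required by Theorem \ref{thm:mainthm} should pass directly from the $\Delta$-side to the $G$-side.

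Concretely, given a quasi-lift $(P^\bullet,\phi)$ of $V^\bullet$ over $A$ satisfying (i)--(iii) of Theorem \ref{thm:derivedresult}, I would first invoke Lemma \ref{lem:efissocute}(i), which says that $K$ is pro-$p'$, closed and normal in $G$ and that $V^\bullet \cong \mathrm{Inf}_\Delta^G V_\Delta^\bullet$ for a suitable complex $V_\Delta^\bullet$ of pseudocompact $k[[\Delta]]$-modules. Proposition \ref{prop:prop} then supplies a natural isomorphism $\hat{F}^\Delta_{V_\Delta^\bullet}(A) \to \hat{F}^G_{V^\bullet}(A)$ induced by inflation, so that $(P^\bullet,\phi)$ is isomorphic as a quasi-lift to $(\mathrm{Inf}_\Delta^G P_\Delta^\bullet, \mathrm{Inf}_\Delta^G \phi_\Delta)$ for some quasi-lift $(P_\Delta^\bullet,\phi_\Delta)$ of $V_\Delta^\bullet$ over $A$.

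Next I would apply Theorem \ref{thm:derivedresult} at the level of $\Delta$ to replace $P_\Delta^\bullet$ by a complex in $D^-(A[[\Delta]])$ that satisfies the analogues of properties (i)--(iii) there. Granting Theorem \ref{thm:mainthm} for the group $\Delta$ would then produce a bounded complex $Q_\Delta^\bullet$ of pseudocompact $A[[\Delta]]$-modules, isomorphic to $P_\Delta^\bullet$ in $D^-(A[[\Delta]])$, whose nonzero terms sit in degrees between $n_1$ and $n_2$ and are abstractly finitely generated free as $A$-modules. Setting $Q^\bullet = \mathrm{Inf}_\Delta^G Q_\Delta^\bullet$ then yields a complex of pseudocompact $A[[G]]$-modules with the same underlying complex of $A$-modules as $Q_\Delta^\bullet$, so each $Q^i$ is finitely generated and free over $A$ and vanishes outside $[n_1,n_2]$, as required.

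The one point that deserves care --- and the closest thing to an obstacle in this reduction --- is to check that the isomorphism $Q_\Delta^\bullet \cong P_\Delta^\bullet$ in $D^-(A[[\Delta]])$ survives inflation to an isomorphism in $D^-(A[[G]])$. This should follow from the fact that inflation along $G \to \Delta$ is exact on pseudocompact modules and preserves topological flatness over $A$, so it induces a triangulated functor $D^-(A[[\Delta]]) \to D^-(A[[G]])$ sending quasi-isomorphisms to quasi-isomorphisms. Chaining the resulting isomorphisms $Q^\bullet = \mathrm{Inf}_\Delta^G Q_\Delta^\bullet \cong \mathrm{Inf}_\Delta^G P_\Delta^\bullet \cong P^\bullet$ in $D^-(A[[G]])$ then finishes the argument and establishes the corollary.
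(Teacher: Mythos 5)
Your argument is correct and is exactly the intended one: the paper offers no proof beyond the remark that the corollary "is a consequence of Lemma \ref{lem:efissocute} and Proposition \ref{prop:prop}," and you have simply filled in the details — use Lemma \ref{lem:efissocute}(i) and Proposition \ref{prop:prop} to replace the given quasi-lift by the inflation of a quasi-lift over $\Delta$, invoke the analogue of Theorem \ref{thm:derivedresult} and Theorem \ref{thm:mainthm} for $\Delta$, and observe that inflation is exact (hence a triangulated functor preserving isomorphisms in $D^-$) and does not change the underlying $A$-module structure, so boundedness, vanishing outside $[n_1,n_2]$, and abstract finite free generation over $A$ all pass back to $G$.
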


Let $A$ be an object of $\hat{\mathcal{C}}$. 
Since $A[[\mathbb{Z}_p^s]]$ is isomorphic to a power
series algebra over $A$ in $s$ commuting variables, it follows that 
$A[[\Delta]]$ is left and right Noetherian 
for $\Delta$ as in Lemma \ref{lem:efissocute}(ii).
For $\Delta$ as in Lemma \ref{lem:efissocute}(iii),
one considers the subgroup $\tilde{\Delta}$ of finite index
in $\Delta$ that is topologically generated
by $w_1=\overline{\Phi}^d$ and by $w_{2,j}$, $1\le j\le r$. 
By embedding $\tilde{\Delta}$ as a closed subgroup of block diagonal matrices
with blocks of size $2$ inside $\mathrm{GL}_{2r}(\mathbb{Z}_p)$,
one sees that $\tilde{\Delta}$ is a compact $p$-adic analytic group.
Hence it follows from Lazard's result \cite[Prop. 2.2.4 of Chap. V]{Lazard}
that $\mathbb{Z}_p[[\tilde{\Delta}]]$ is left and right Noetherian.
Since Lazard's arguments also work if $\mathbb{Z}_p$ is replaced by $A$,
we obtain the following result.

\begin{lemma}
\label{lem:noetherian}
If $A$ is an arbitrary object of $\hat{\mathcal{C}}$ and $\Delta$ is as in Lemma 
$\ref{lem:efissocute}$,
then the ring $B=A[[\Delta]]$ is both left Noetherian and right Noetherian.
\end{lemma}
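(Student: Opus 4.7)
The plan is to treat the two cases A and B separately, in each case reducing the Noetherianness of $B = A[[\Delta]]$ to a standard result.

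Case A is direct. With $\Delta = \mathbb{Z}_p^s\times Q\times Q'$ and $Q\times Q'$ a finite abelian group, I would identify $A[[\mathbb{Z}_p^s]]$ with the formal power series ring $A[[T_1,\ldots,T_s]]$ by sending the topological generator $w_{1,j}$ of the $j$-th $\mathbb{Z}_p$-factor to $1+T_j$. Since $A$ is Noetherian, iteration of the formal Hilbert basis theorem shows that $A[[T_1,\ldots,T_s]]$ is Noetherian. Because $Q\times Q'$ is finite, $A[[\Delta]]$ is free of finite rank $\#(Q\times Q')$ as a module over $A[[\mathbb{Z}_p^s]]$ on either side, so it inherits the two-sided Noetherian property.

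For case B, the first step is to reduce from $\Delta$ to the finite-index closed subgroup $\tilde\Delta$ topologically generated by $w_1$ and the $w_{2,j}$. A choice of coset representatives for $\Delta/\tilde\Delta$ exhibits $A[[\Delta]]$ as a free $A[[\tilde\Delta]]$-module of finite rank on both sides, so it suffices to prove $A[[\tilde\Delta]]$ is two-sided Noetherian. I would next verify that $\tilde\Delta$ is a compact $p$-adic analytic group via the closed embedding indicated in the excerpt: encode each topological generator in a $2\times 2$ block inside $\mathrm{GL}_{2r}(\mathbb{Z}_p)$, with $w_1$ acting on each $\mathbb{Z}_p(1)$-factor via its Frobenius character and each $w_{2,j}$ acting as a unipotent generator of the corresponding block. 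Since closed subgroups of compact $p$-adic analytic groups are again of this form, Lazard's theorem \cite[Prop.~2.2.4 of Chap.~V]{Lazard} yields that $\mathbb{Z}_p[[\tilde\Delta]]$ is two-sided Noetherian.

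The remaining task, which is also the main obstacle, is to extend Lazard's argument from the coefficient ring $\mathbb{Z}_p$ to an arbitrary object $A$ of $\hat{\mathcal{C}}$. I would select a uniform open normal pro-$p$ subgroup $U$ of $\tilde\Delta$, as provided by Lazard, and endow $A[[\tilde\Delta]]$ with the filtration obtained by combining the powers of the augmentation ideal of $A[[U]]$ with the $m_A$-adic filtration on $A$. The associated graded ring is isomorphic to a polynomial algebra over $\mathrm{gr}\,A$ in finitely many degree-one variables, hence Noetherian because $\mathrm{gr}\,A$ is Noetherian. Completeness and separatedness of the filtration on $A[[\tilde\Delta]]$ then let one lift two-sided Noetherianness from the associated graded ring back to $A[[\tilde\Delta]]$ itself by the standard filtered-to-graded argument, which completes the proof.
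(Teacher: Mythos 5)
Your proposal is correct and takes essentially the same route as the paper: power series plus a finite extension in Case~A, and in Case~B reduction to the finite-index subgroup $\tilde\Delta$, the block-diagonal embedding into $\mathrm{GL}_{2r}(\mathbb{Z}_p)$, and Lazard's theorem. The only place you go beyond the paper's text is the final paragraph, where the paper simply asserts that ``Lazard's arguments also work if $\mathbb{Z}_p$ is replaced by $A$''; your filtered-to-graded argument is a reasonable way of making that assertion precise.
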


For the remainder of this section, let 
$A$ be an object of $\hat{\mathcal{C}}$ and let $B = A[[\Delta]]$.
To better explain the main ideas of the proof without having to use multiple
subscripts, we will at first assume that if $\Delta$ is as in Lemma \ref{lem:efissocute}(iii)
then $r=1$. In this case we will write $w_2$ instead of $w_{2,1}$.
We will show in 
\S \ref{ss:generalR} how to generalize the proofs to work for $r> 1$. 

The proof of Theorem \ref{thm:mainthm} depends on the following results.

\begin{prop}
\label{prop:zerostep}
Suppose $\Delta$ is as in Lemma $\ref{lem:efissocute}(iii)$ and $r=1$.
Define $w_2=w_{2,1}$.
For positive integers $N,N'$, let $J=B\cdot (w_2^N-1)^{N'}$. 
Then $J$ is a closed two-sided ideal of $B$ and
the quotient ring $\overline{B}=B/J$ is a pseudocompact $A$-algebra. 
Moreover, $J$ is a topologically free
rank one left $B$-module and a topologically free rank one 
right $B$-module.  
\end{prop}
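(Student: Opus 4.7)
The plan is to exploit that $\langle w_2\rangle \cong \mathbb{Z}_p(1)$ is a closed normal subgroup of $\Delta$ and that conjugation by any $g\in\Delta$ acts on $\langle w_2\rangle$ through a $p$-adic unit. This reduces the proposition to an explicit computation with $\alpha:=w_2^N-1$ together with showing that $\alpha^{N'}$ is a non-zero-divisor in $B=A[[\Delta]]$.

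My first step is a conjugation formula. For $g\in\Delta$ write $gw_2g^{-1}=w_2^{e_g}$ with $e_g\in\mathbb{Z}_p^*$: when $g\in\Delta_1$ one has $e_g=1$ since $\Delta_1$ is abelian, and when $g\in\Delta_0$ this follows from the description in Lemma~\ref{lem:efissocute} of the $\Phi$-action on $\mathbb{Z}_p(1)$ as multiplication by $\ell^f\in\mathbb{Z}_p^*$ (using $\ell\neq p$). Working in the pseudocompact commutative subring $A[[\langle w_2\rangle]]\cong A[[T]]$ (via $w_2\leftrightarrow 1+T$), one then has
\[
g\alpha g^{-1}\;=\;(1+\alpha)^{e_g}-1\;=\;\alpha\,u_g,\qquad u_g=\sum_{k\ge 0}\binom{e_g}{k+1}\alpha^k,
\]
where the series converges because $\alpha\in(T)$ and therefore $\alpha^k\to 0$ in the pseudocompact topology, and where $u_g$ is a unit because its reduction modulo the maximal ideal is $e_g\bmod p\neq 0$ in $k$. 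Since $\alpha$ and $u_g$ commute inside $A[[\langle w_2\rangle]]$, one deduces $g\alpha=\alpha(u_g g)$ and $\alpha g=(gu_{g^{-1}})\alpha$. The set $\{b\in B:b\alpha\in\alpha B\}$ is then a closed $A$-submodule of $B$ containing $\Delta$, hence equals $B$; the symmetric argument gives $\alpha B\subseteq B\alpha$. Therefore $B\alpha=\alpha B$, and by induction $J=B\alpha^{N'}=\alpha^{N'}B$ is a two-sided ideal. Closedness is automatic from Remark~\ref{rem:useful}(i) since $J$ is the image of the continuous right-multiplication $b\mapsto b\alpha^{N'}$ on the pseudocompact module $B$, so $\overline B=B/J$ inherits the structure of a pseudocompact $A$-algebra via the quotient topology.

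For the topological freeness, the continuous surjection $B\to J$, $b\mapsto b\alpha^{N'}$, of left $B$-modules is an isomorphism in $\mathrm{PCMod}(B)$ by Remark~\ref{rem:useful}(ii) as soon as it is injective; equivalently, as soon as $\alpha^{N'}$ is a right non-zero-divisor in $B$. Under $A[[\langle w_2\rangle]]\cong A[[T]]$, the element $\alpha=(1+T)^N-1$ is a monic polynomial in $T$ of degree $N$, so $\alpha^{N'}$ is monic of degree $NN'$. I would then invoke the Weierstrass-type statement that multiplication by a monic polynomial is injective on $A[[T]]$ for any $A\in\hat{\mathcal{C}}$ (proved by reducing to $A$ Artinian via $A=\varprojlim A/m_A^n$ and applying the usual polynomial division algorithm). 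Since $B=A[[\Delta]]$ is topologically free as a right, respectively left, $A[[\langle w_2\rangle]]$-module on any continuous choice of coset representatives of $\langle w_2\rangle$ in $\Delta$ (obtained as the inverse limit over finite quotients $\Delta/U$), right multiplication by $\alpha^{N'}$ acts factor-wise and is therefore injective, yielding the left $B$-module assertion; the symmetric argument with left multiplication gives the right $B$-module assertion.

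The main obstacle I anticipate is making the conjugation computation rigorous under the joint constraints that $B$ is non-commutative, $A$ may have arbitrary nilpotents, and the formula must be valid for every $g\in\Delta$ rather than only on a dense subset; once this is done and the Weierstrass division is checked, the rest of the argument is routine manipulation of topologically free modules over completed group algebras.
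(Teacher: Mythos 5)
Your proof is correct, and it is a genuinely different argument from the paper's. For two-sidedness the paper uses the direct commutation identity $(\ref{eq:commute})$; you instead derive the uniform conjugation formula $gw_2g^{-1}=w_2^{e_g}$ with $e_g\in\mathbb{Z}_p^*$ for all $g\in\Delta$ and close up via a closed-submodule argument. For topological freeness, the paper first proves Lemma~\ref{lem:represent}, which factors $w_2^N-1$ as $(w_2^{p^s}-1)$ times a unit and exhibits a unique convergent power-series form for elements of $B$ adapted to $(w_2^{p^s}-1)$, so that one-sided multiplication by $(w_2^{p^s}-1)^{N'}$ is visibly an index shift and hence injective. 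You bypass the $p^s$-reduction: $w_2^N-1$ is already monic of degree $N$ in $T=w_2-1$, a monic polynomial is a non-zero-divisor in $A[[T]]$, and $B$ is topologically free on each side over $A[[\langle w_2\rangle]]$. Your route is more economical for this proposition on its own, while the paper's explicit basis from Lemma~\ref{lem:represent} is reused in \S\ref{ss:secondprop} and \S\ref{ss:secondstep} and so earns its keep. Two small gaps you should fill: (1) ``the usual polynomial division algorithm'' does not by itself give injectivity of multiplication by a monic $f$ on $A[[T]]$ --- one needs either a d\'evissage on the length of an Artinian quotient of $A$, or the observation that division with remainder yields, at each finite truncation, a surjection between free $A$-modules of the same finite rank, hence a bijection, and then one passes to the inverse limit; (2) ``coset representatives of $\langle w_2\rangle$'' is imprecise when $\Delta/\langle w_2\rangle$ is infinite profinite --- what you really need is a topological $A[[\langle w_2\rangle]]$-basis of $B$, for instance the set $\{\sigma^u(w_1-1)^a\xi : 0\le u<d,\ a\ge 0,\ \xi\in\tilde{\Delta}_1\}$, which is exactly what Lemma~\ref{lem:represent} produces.
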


\begin{prop}
\label{prop:firststep}
Suppose $\Delta$ is as in Lemma $\ref{lem:efissocute}(iii)$ and $r=1$.
Define $w_2=w_{2,1}$.
Let $M$ be a 
pseudocompact $B$-module that is finitely generated as an abstract $A$-module.
Then there exist positive
integers $N,N'$ such that $(w_2^N-1)^{N'}\cdot M=\{0\}$.  
\end{prop}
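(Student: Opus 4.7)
The plan is to show that $(w_2^N-1)$ lies in the nilradical of a suitable commutative finite $A$-subalgebra of $\mathrm{End}_A(M)$ for some $N$, which immediately gives $(w_2^N-1)^{N'}M=0$ with $N'$ equal to the index of nilpotency.

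First I reduce modulo $m_A$: the finite-dimensional $k$-vector space $\overline{M}=M/m_AM$ carries a continuous $\Delta$-action with discrete target $\mathrm{Aut}_k(\overline{M})$, so the action factors through a finite quotient of $\Delta$, and the image of the pro-$p$ element $w_2$ has $p$-power order; hence $(w_2^{p^{n_0}}-1)M\subseteq m_AM$ for some $n_0$. Since $M$ is finitely generated over the Noetherian ring $A$, the determinant trick shows that $T:=w_2-1\in\mathrm{End}_A(M)$ is integral over $A$: the commutative subring $R:=A[T]\subseteq \mathrm{End}_A(M)$ is a finitely generated $A$-module, hence a semilocal Noetherian $A$-algebra with $T\in\mathrm{Jac}(R)$, and $T$ satisfies a monic polynomial $P(X)\in A[X]$ of degree bounded by the number of $A$-generators of $M$.

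Next, the semidirect-product relation $\sigma w_2\sigma^{-1}=w_2^c$, with $\sigma=w_1$ and $c=\ell^{fd}\in\mathbb{Z}_p^\times$, becomes, writing $U\in\mathrm{End}_A(M)^\times$ for the action of $\sigma$ and $f(T)=(1+T)^c-1=cT+\binom{c}{2}T^2+\cdots\in\mathbb{Z}_p[[T]]$, the identity $UTU^{-1}=f(T)$. Since $c\equiv 1\pmod p$ (which follows from $d'\mid d$ in the proof of Lemma \ref{lem:efissocute}(iii)), $c$ is a unit in $A$, so $f$ has a formal compositional inverse $g\in A[[Y]]$, and $g(f(T))=T$ converges in the $\mathrm{Jac}(R)$-adically complete ring $R$; therefore $T\in A[f(T)]$, so conjugation by $U$ preserves $R=A[T]=A[f(T)]$ and induces an $A$-algebra automorphism $\psi$ of $R$ with $\psi(T)=f(T)$. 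The automorphism $\psi$ permutes the finitely many minimal primes $\mathfrak{p}_i$ of $R$; after replacing $\psi$ by a suitable iterate, $\psi$ fixes each $\mathfrak{p}_i$ and acts on the domain $R_i:=R/\mathfrak{p}_i$. Inside $R_i$ the orbit of the image of $T$ under iterates of $f$ is finite, so $f^n(T)=T$ for some $n$ bounded by $\deg P$, which rewrites as $(1+T)^{c^n-1}=1$ in $R_i$; because the image of $T$ is in the maximal ideal of $R_i$, the unit $1+T$ is principal and its order is a $p$-power divisor of $c^n-1$. Lifting the exponent gives $v_p(c^n-1)\le v_p(c-1)+v_p(n)$, producing a uniform bound $p^S$ on the $p$-power orders of $1+T$ across all $R_i$.

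Finally, taking $N=p^S$ makes $(1+T)^N=1$ in every $R_i$, so $w_2^N-1\in\bigcap_i\mathfrak{p}_i=\mathrm{nilrad}(R)$; the nilradical of the Noetherian ring $R$ is nilpotent, hence $(w_2^N-1)^{N'}M=0$ for $N'$ the index of nilpotency. The main obstacle is the eigenvalue-periodicity analysis over a non-reduced base, which I handle by reducing to the minimal primes of $R$ and applying lifting-the-exponent; verifying that $\psi$ is well defined, via the formal inverse $g$, is a routine complete-local computation.
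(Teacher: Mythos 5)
Your argument is correct in substance, but it follows a genuinely different route from the paper's. The paper first proves the annihilation statement over each residue field $k(\mathfrak{p})$ of $A$ (Lemma \ref{lem:step11}, Corollary \ref{cor:step11}), using that the roots of the monic polynomial satisfied by $w_2$ over a field must be roots of unity because they are permuted by $\zeta\mapsto\zeta^{\ell^{fd}}$; it then bootstraps from the residue fields to nilpotence over $A$ by an induction on the codimension of prime ideals of $A$ (Lemma \ref{lem:step12}). You instead work inside the finite commutative $A$-subalgebra $R=A[T]\subseteq\mathrm{End}_A(M)$ with $T=w_2-1$, use the relation $w_1w_2w_1^{-1}=w_2^c$ ($c=\ell^{fd}\equiv 1\bmod p$) to get an $A$-algebra automorphism $\psi$ of $R$ with $\psi(T)=(1+T)^c-1$, and analyze the image of $T$ modulo each minimal prime of $R$. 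The same basic ingredients appear (Cayley--Hamilton, the conjugation relation, finiteness of roots of a fixed polynomial in a domain, orders of principal units are $p$-powers), but your decomposition is by the minimal primes of $R$ rather than by the primes of $A$, and you replace the paper's codimension induction with the single fact that the nilradical of a Noetherian ring is nilpotent. This is a cleaner reduction and it localizes the roots-of-unity argument to the quotients $R/\mathfrak{p}_i$ via lifting-the-exponent.

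One step is asserted rather than proved: $T\in\mathrm{Jac}(R)$ does not follow merely from $R$ being a finite $A$-algebra. It does follow from your mod-$m_A$ observation, but a bridging argument is needed, since $R/m_AR$ need not inject into $\mathrm{End}_k(\overline{M})$. For instance: since $A$ is complete local Noetherian and $R$ is finite over $A$, $R$ is a finite product of complete local rings $\prod_j e_jR$, and $M=\bigoplus_j e_jM$ with each $e_jM$ a faithful $e_jR$-module. If $T$ were a unit in some factor $e_jR$, then $T$ would act bijectively on $e_jM\neq 0$; but $(w_2^{p^{n_0}}-1)^mM\subseteq m_A^mM$ for all $m$ forces $e_jM\subseteq\bigcap_m m_A^m(e_jM)=0$ (using that $e_jM$ is an $A$-module direct summand of $M$), a contradiction. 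Hence $w_2^{p^{n_0}}-1$, and therefore $T$, lies in $\mathrm{Jac}(R)$. With this inserted, the convergence of $g(f(T))$, the fact that $\psi$ preserves $R$, and the containment of $t$ in the maximal ideal of each $R_i$ all hold, and the rest of your argument goes through.
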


\begin{prop}
\label{prop:secondprop}  
Let $J$ be a two-sided ideal in $B$ of the following form:
\begin{enumerate}
\item[(i)] If $\Delta$ is as in Lemma $\ref{lem:efissocute}(ii)$, let $J=\{0\}$.
\item[(ii)] If $\Delta$ is as in Lemma $\ref{lem:efissocute}(iii)$ and $r=1$, 
let $J=B\cdot (w_2^N-1)^{N'}$, where $w_2=w_{2,1}$ and $N,N'$ are positive integers. 
\end{enumerate}
If $\Lambda=B/J$, then $\Lambda$ is a pseudocompact $A$-algebra.
Suppose $M$ is a pseudocompact $\Lambda$-module that is finitely
generated as an abstract $\Lambda$-module.
Let $T$ be a pseudocompact $\Lambda$-submodule of $M$
that is finitely generated as an abstract $A$-module. Then there is a 
pseudocompact
$\Lambda$-submodule $M'$ of $M$ such that $M' \cap T = \{0\}$ and $M/M'$ is 
finitely generated as an abstract $A$-module.
\end{prop}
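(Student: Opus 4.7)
Plan: I would take $\mathfrak{b}=\mathrm{Ann}_\Lambda(T)$, the annihilator of $T$ in $\Lambda$, which is a two-sided ideal since $T$ is a $\Lambda$-submodule of $M$. Since $T$ is finitely generated over the Noetherian ring $A$, $\mathrm{End}_A(T)$ is a finitely generated $A$-module, and the induced injection $\Lambda/\mathfrak{b}\hookrightarrow\mathrm{End}_A(T)$ shows $\Lambda/\mathfrak{b}$ is as well. Because $\Lambda$ is left Noetherian by Lemma~\ref{lem:noetherian}, each successive quotient $\mathfrak{b}^i/\mathfrak{b}^{i+1}$ is a finitely generated left $\Lambda/\mathfrak{b}$-module, and induction on $n$ shows that $\Lambda/\mathfrak{b}^n$ is finitely generated over $A$ for every $n\ge 1$.

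The natural candidate for $M'$ is $\mathfrak{b}^nM$ for a suitable $n$. Being the product with a two-sided ideal, $\mathfrak{b}^nM$ is a pseudocompact $\Lambda$-submodule of $M$, and the quotient $M/\mathfrak{b}^nM$ is finitely generated over $\Lambda/\mathfrak{b}^n$, hence over $A$. Since $\mathfrak{b}T=0$ by construction, the remaining condition $M'\cap T=\{0\}$ reduces to an Artin-Rees-style bound $\mathfrak{b}^nM\cap T\subseteq\mathfrak{b}T=\{0\}$ for $n$ sufficiently large. In Case A the ring $\Lambda$ is commutative, and the classical Artin-Rees lemma immediately supplies such an $n$: there exists $n_0$ with $\mathfrak{b}^nM\cap T\subseteq\mathfrak{b}^{n-n_0}(\mathfrak{b}^{n_0}M\cap T)$ for $n\ge n_0$, so $\mathfrak{b}^{n_0+1}M\cap T\subseteq\mathfrak{b}T=\{0\}$.

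In Case B with $r=1$ the ring $\Lambda$ is non-commutative owing to the semidirect-product relation $w_1w_2w_1^{-1}=w_2^q$, and the classical Artin-Rees lemma does not apply directly. My plan here is to exploit that $\Lambda$ is finitely generated as a left module over the commutative Noetherian subring $C=A[[\Delta_0]]$: the relation $(w_2^N-1)^{N'}=0$ forces $w_2$ to be a unit expressible as an $A$-linear combination of $1,w_2,\ldots,w_2^{NN'-1}$, so a finite $C$-module generating set for $\Lambda$ is $\{w_2^bg_1:0\le b<NN',\,g_1\in\tilde{\Delta}_1\}$. Viewed as $C$-modules, $M$ is finitely generated and $T$ is a $C$-submodule finitely generated over $A$; commutative Artin-Rees applied to $\mathfrak{c}=\mathrm{Ann}_C(T)$ in $C$ yields an $n$ with $\mathfrak{c}^nM\cap T=\{0\}$, and the final step is to lift this $C$-level statement to the required $\Lambda$-level bound.

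The main obstacle is precisely this lifting step in Case B. The submodule $\mathfrak{c}^nM$ is a $C$-submodule but is not $\Lambda$-stable, since conjugation by $w_2$ does not preserve $C$; moreover the containment $\mathfrak{c}\subseteq\mathfrak{b}$ only gives $\mathfrak{c}^nM\subseteq\mathfrak{b}^nM$, the wrong direction for directly bounding $\mathfrak{b}^nM\cap T$. Overcoming this will require either identifying a $\Lambda$-invariant refinement of the commutative Artin-Rees data, for instance by replacing $\mathfrak{c}$ by the largest $\Delta_0$-stable sub-ideal still annihilating $T$ and extending it to a two-sided ideal of $\Lambda$, or else establishing an Artin-Rees property for $\mathfrak{b}$ directly in the crossed-product ring $\Lambda=A[[\Delta_1]]*\Delta_0/J$ by exploiting its specific structure together with the nilpotence of $w_2^N-1$.
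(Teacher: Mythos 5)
Your Case A argument is correct and somewhat cleaner than the paper's own: working directly with $\mathfrak{b}=\mathrm{Ann}_\Lambda(T)$ and observing that $\Lambda/\mathfrak{b}^n$ is $A$-finite (via the embedding $\Lambda/\mathfrak{b}\hookrightarrow\mathrm{End}_A(T)$ together with left Noetherian-ness of $\Lambda$) neatly packages what the paper achieves with the concrete ideal generated by monic polynomials $F_j(w_{1,j})$ annihilating $T$. The Artin--Rees step and the verification that $\mathfrak{b}^n M$ is a closed pseudocompact $\Lambda$-submodule are both fine.

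Case B is a genuine gap, and you have correctly diagnosed the obstruction: $C=A[[\Delta_0]]$ is not central in $\Lambda$, so $\mathfrak{c}^n M$ is merely a $C$-submodule and not $\Lambda$-stable, and neither of your proposed remedies is carried out. The paper's actual argument is two-layered, and your plan anticipates at most the easier half. When $N'=1$, $J=B\cdot(w_2^{p^s}-1)$ and $\Lambda=A[[\overline{\Delta}]]$ is again the group algebra of a profinite group $\overline{\Delta}$ in which the image of $w_2$ has finite order $p^s$; hence conjugation by $w_1$ on $(\langle w_2\rangle\times\tilde{\Delta}_1)/\langle w_2^{p^s}\rangle$ has finite order and $w_1^z$ is \emph{central} in $\overline{\Delta}$, hence in $\Lambda$, for $z$ sufficiently divisible. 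Taking $I$ generated by a monic polynomial in $w_1^z$ that annihilates $T$, the crucial point is that $I^{q+1}$ is generated by a single central element, so $I^{q+1}M$ is automatically a pseudocompact $\Lambda$-submodule, and Artin--Rees over the commutative subring $A[[\langle w_1^z\rangle]]$ closes this case. For $N'>1$ this centrality fails outright: $(\bar{w}_2^{p^s}-1)^{N'}=0$ in $\Lambda$ does not make $\bar{w}_2^{p^s}=1$, so no power of $w_1$ is central. The paper handles this with an $\epsilon$-socle filtration not present in your proposal: set $\epsilon=w_2^{p^s}-1$ and $M(\epsilon^m)=\{\alpha\in M:\epsilon^m\alpha=0\}$. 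Applying the $N'=1$ case to $M(\epsilon)$ as a module over $\Lambda/\Lambda\epsilon$ produces, whenever $M(\epsilon)$ is not yet $A$-finite, a nonzero pseudocompact $\Lambda$-submodule $Y\subseteq M(\epsilon)$ with $T\cap Y=0$; pulling back and iterating yields a strictly increasing chain $M_0\subset M_1\subset\cdots$ of pseudocompact $\Lambda$-submodules meeting $T$ trivially, which must stop because $M$ is left Noetherian. The stopping point $M'$ satisfies $(M/M')(\epsilon)$ finitely generated over $A$, and a short induction on $m$ using the exact sequence $0\to M(\epsilon)\to M(\epsilon^{m+1})\to M(\epsilon^m)$ and $M(\epsilon^{N'})=M$ upgrades this to $M/M'$ finitely generated over $A$. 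Without both the centrality trick for $N'=1$ and the Noetherian-chain filtration for general $N'$, your Case B argument does not close.
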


\begin{prop}
\label{prop:bounded}
Let $\Omega$ be a pseudocompact ring that is left Noetherian.
Let $P^\bullet$ be a complex in $D^-(\Omega)$ whose terms $P^i$ are
free and finitely generated as abstract $\Omega$-modules 
such that $P^i = 0$ if $i > 0$.  Suppose that for $i \le 0$, $I_i$ is a closed two-sided 
ideal in $\Omega$ with the following properties. 
\begin{enumerate}
\item[(a)]  The cohomology group $\HH^i(P^\bullet)$ is annihilated by $I_i$ for $i \le 0$.
\item[(b)] For $i \le 0$, the  two-sided ideal $J_i=I_i \cdot I_{i+1} \cdots I_1 \cdot I_0$ is 
free and finitely generated as an abstract left $\Omega$-module.
\end{enumerate}
Then $P^\bullet$ is isomorphic in $D^-(\Omega)$ to a complex 
$Q^\bullet$ such that $Q^i = 0$ for $i > 0$ and $Q^i$ is annihilated by $J_i$ 
for $i \le 0$.  
\end{prop}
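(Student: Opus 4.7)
The plan is to construct $Q^\bullet$ by a sequence of right-to-left modifications of $P^\bullet$. Starting from $\hat{P}^\bullet_0 = P^\bullet$, I will inductively build complexes $\hat{P}^\bullet_n$ for $n \ge 0$, each quasi-isomorphic to $P^\bullet$ in $D^-(\Omega)$, such that $\hat{P}^i_n$ is annihilated by $J_i$ for $i > -n$ while $\hat{P}^i_n$ remains free and finitely generated as an abstract $\Omega$-module for $i \le -n$. The complex $Q^\bullet$ is then read off as the stable value in each degree.

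The engine of the induction is the following observation. At stage $n$, since $\hat{P}^{-n+1}_n$ is annihilated by $J_{-n+1}$ (using the convention $J_1 = \Omega$ to handle the case $n = 0$, where $\hat{P}^1_0 = 0$), the image of $\hat{d}^{-n}_n$ is killed by $J_{-n+1}$, so $J_{-n+1}\hat{P}^{-n}_n \subseteq \ker(\hat{d}^{-n}_n)$. Combined with $I_{-n}\ker(\hat{d}^{-n}_n) \subseteq \mathrm{im}(\hat{d}^{-n-1}_n)$ from hypothesis (a), this yields
\[
J_{-n}\hat{P}^{-n}_n \;=\; I_{-n}J_{-n+1}\hat{P}^{-n}_n \;\subseteq\; \mathrm{im}(\hat{d}^{-n-1}_n).
\]
Moreover, by hypothesis (b) and the freeness of $\hat{P}^{-n}_n$, the module $J_{-n}\hat{P}^{-n}_n$ is itself free of finite rank, hence projective; so the restricted surjection from $(\hat{d}^{-n-1}_n)^{-1}(J_{-n}\hat{P}^{-n}_n)$ onto $J_{-n}\hat{P}^{-n}_n$ splits, producing a section $s \colon J_{-n}\hat{P}^{-n}_n \to \hat{P}^{-n-1}_n$ with $\hat{d}^{-n-1}_n \circ s$ equal to the inclusion.

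Using this section, I define $\hat{P}^\bullet_{n+1}$ to agree with $\hat{P}^\bullet_n$ except in degrees $-n$ and $-n-2$. Set $\hat{P}^{-n}_{n+1} = \hat{P}^{-n}_n/J_{-n}\hat{P}^{-n}_n$ (which makes sense as a term of a complex because $J_{-n} \subseteq J_{-n+1}$ annihilates $\hat{P}^{-n+1}_n$) and $\hat{P}^{-n-2}_{n+1} = \hat{P}^{-n-2}_n \oplus J_{-n}\hat{P}^{-n}_n$, with the new differential $(x,y) \mapsto \hat{d}^{-n-2}_n(x) + s(y)$ out of this summand and $z \mapsto (\hat{d}^{-n-3}_n(z),0)$ into it. A short diagram chase verifies that the resulting $\hat{P}^\bullet_{n+1}$ is a complex and that the natural chain map $\hat{P}^\bullet_n \to \hat{P}^\bullet_{n+1}$ (quotient in degree $-n$, inclusion into the first summand in degree $-n-2$) is a quasi-isomorphism: the $s$-direction in degree $-n-2$ exactly cancels the spurious kernel elements that the quotient would otherwise introduce in $\HH^{-n-1}$, and $\HH^{-n-2}$ is unaffected because $\hat{d}^{-n-2}_n(x) + s(y) = 0$ forces $y = 0$ (apply $\hat{d}^{-n-1}_n$ and use $\hat{d}^{-n-1}_n \circ s = \mathrm{inclusion}$).

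For each fixed degree $i$ the term $\hat{P}^i_n$ is only changed at the transitions $n = -i$ and (when $i \le -2$) $n = -i-2$, so it is constant for $n \ge -i+1$; defining $Q^i$ to be this stable value gives the required complex $Q^\bullet \in D^-(\Omega)$ with $Q^i = 0$ for $i > 0$ and $J_i Q^i = 0$ for $i \le 0$, and cohomology in each degree computed from a stabilized $\hat{P}^\bullet_N$ equal to that of $P^\bullet$. The hardest point is establishing the inclusion $J_{-n}\hat{P}^{-n}_n \subseteq \mathrm{im}(\hat{d}^{-n-1}_n)$ at each stage, which is precisely where hypotheses (a) and (b) interlock with the prior induction: one needs the $J_{-n+1}$-annihilation already achieved at degree $-n+1$ in order to promote the cohomological annihilation at degree $-n$ into an annihilation of the entire term.
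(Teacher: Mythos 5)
Your argument is correct and takes a genuinely different route from the paper's. The paper's proof, at each descending step $j$, quotients $Q^j$ by $J_j Q^j$ \emph{and} quotients $Q^{j-1}$ by the image $s(J_j Q^j)$ of the splitting (forming an acyclic two-term subcomplex and passing to the quotient complex). The resulting term in degree $j-1$ is then only finitely generated, not free, so the paper must re-resolve the truncation $Q_1^{\le(j-1)}$ by free modules via Hartshorne's construction and splice (Claims 2 and 3). You avoid the re-resolution entirely by leaving degree $-n-1$ untouched and instead \emph{adding} a free summand $J_{-n}\hat{P}^{-n}_n$ in degree $-n-2$, with differential built from the section $s$; freeness in degrees $\le -n-1$ is then preserved for free, so the single modification suffices. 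This is a cleaner one-pass induction at the cost of changing a term two degrees to the left rather than one, and of a slightly more delicate cohomology check at $\HH^{-n-1}$ (which you handle correctly via the direct-sum decomposition $(\hat{d}^{-n-1}_n)^{-1}(J_{-n}\hat{P}^{-n}_n)=\ker(\hat{d}^{-n-1}_n)\oplus s(J_{-n}\hat{P}^{-n}_n)$). One expository point you should make explicit: the splitting must be a morphism of \emph{pseudocompact} $\Omega$-modules, not merely of abstract modules, so you need to invoke that an abstractly free, finitely generated pseudocompact module is topologically free (Remark~\ref{rem:useful}(iii)) and hence projective in $\mathrm{PCMod}(\Omega)$, and that $(\hat{d}^{-n-1}_n)^{-1}(J_{-n}\hat{P}^{-n}_n)$ and $J_{-n}\hat{P}^{-n}_n$ are closed, with the restricted $\hat{d}^{-n-1}_n$ a continuous surjection between them; the paper does exactly this and your write-up should too. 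With that caveat addressed, the proof is complete.
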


\begin{prop}
\label{prop:secondstep}
Suppose $\Delta$ is one of the groups in Lemma $\ref{lem:efissocute}$,
where we assume $r=1$ when $\Delta$ is as in Lemma $\ref{lem:efissocute}(iii)$.
Let $M$ be a pseudocompact $B$-module that is finitely generated
as an abstract $A$-module.
Then there exists a pseudocompact $B$-module $F$ that is 
free and finitely generated as an abstract $A$-module
and a surjective homomorphism $\varphi:F \to M$ of pseudocompact
$B$-modules.
\end{prop}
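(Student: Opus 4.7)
The plan is to reduce the problem to covering the $A$-finite quotient algebra $\overline{B} := B/\mathrm{Ann}_B(M)$ by an $A$-free $B$-module of finite rank, and then to construct that cover using the structural decomposition of $\Delta$ given in Lemma \ref{lem:efissocute}.

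First I would show that $\overline{B}$ is finitely generated as an $A$-module. The $B$-action on $M$ factors through an injection $\overline{B} \hookrightarrow \mathrm{End}_A(M)$ of pseudocompact $A$-algebras; since $M$ is $A$-finitely generated and $A$ is Noetherian, $\mathrm{End}_A(M)$ is $A$-finite, whence so is its $A$-submodule $\overline{B}$. Choosing $A$-generators $m_1, \ldots, m_n$ of $M$ then produces a $B$-module surjection $\overline{B}^n \twoheadrightarrow M$, $(c_1, \ldots, c_n) \mapsto \sum_{i=1}^n c_i m_i$. Consequently, it suffices to build an $A$-free pseudocompact $B$-module $F_0$ of finite rank equipped with a $B$-equivariant surjection $F_0 \twoheadrightarrow \overline{B}$; the module $F := F_0^n$ will then satisfy the conclusion of the proposition.

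To build $F_0$, I would take a minimal $A$-surjection $\pi \colon A^d \twoheadrightarrow \overline{B}$ with $d = \dim_k \overline{B}/m_A \overline{B}$, and attempt to lift the left-regular action $\rho \colon \Delta \to \mathrm{Aut}_A(\overline{B})$ to a continuous homomorphism $\tilde{\rho} \colon \Delta \to \mathrm{GL}_d(A)$ preserving $K := \ker \pi$. For each fixed $\delta$, a set-theoretic lift in $\mathrm{End}_A(A^d)$ is available by choosing $\pi$-preimages of $\delta \cdot \pi(e_i)$ for the standard basis $\{e_i\}$, and Nakayama's lemma together with the completeness of $A$ guarantees that such a lift automatically lies in $\mathrm{GL}_d(A)$. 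In case (ii), with $\Delta = \mathbb{Z}_p^s \times Q \times Q'$ abelian, I would lift the action of each topological generator $w_{1,j}$ of the $\mathbb{Z}_p$-factors independently using continuity, and handle the finite group $Q \times Q'$ via Maschke-type averaging on the prime-to-$p$ part $Q'$ combined with explicit permutation-module constructions for $Q$. In case (iii) with $r = 1$, Proposition \ref{prop:firststep} gives the crucial additional constraint $(w_2^N - 1)^{N'} \overline{B} = 0$, so the action of $w_2$ on $A^d$ can be modeled by an appropriate nilpotent-perturbation matrix of controlled size, after which the action of $w_1$ and of the finite parts $\tilde{\Delta}_0, \tilde{\Delta}_1$ are lifted using the semidirect-product structure of $\Delta$.

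The main obstacle will be guaranteeing that the elementwise lifts assemble into a genuine continuous group homomorphism $\tilde{\rho} \colon \Delta \to \mathrm{GL}_d(A)$. The obstruction is essentially cohomological, taking values in (non-abelian) continuous $H^2$ of $\Delta$ with coefficients in the pro-unipotent kernel $K' \subseteq \mathrm{GL}_d(A)$ of automorphisms that reduce to the identity on $\overline{B}$. If this obstruction does not vanish at the minimal rank $d$, I would enlarge $d$ by passing to a non-minimal $A$-free cover of $\overline{B}$; extra free directions then absorb the $2$-cocycle, and the specific finiteness constraints on $\Delta$ from Lemma \ref{lem:efissocute} --- together with the annihilator from Proposition \ref{prop:firststep} in case (iii) --- ensure that a finite enlargement suffices.
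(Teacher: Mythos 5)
The reduction to covering $\overline{B}=B/\mathrm{Ann}_B(M)$ is valid, and your observation that $\overline{B}$ is $A$-finitely generated via $\overline{B}\hookrightarrow \mathrm{End}_A(M)$ is correct. But the reduction does not simplify the problem: covering $\overline{B}$ by an $A$-free, $A$-finitely generated $B$-module is precisely the original statement applied to the module $N=\overline{B}$, so you still owe the construction.

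The real gap is in the lifting step. You propose to lift the left-regular representation $\rho:\Delta\to\mathrm{Aut}_A(\overline{B})$ along a minimal surjection $A^d\twoheadrightarrow\overline{B}$ to a continuous $\tilde{\rho}:\Delta\to\mathrm{GL}_d(A)$, and to absorb the resulting non-abelian $H^2$ obstruction by enlarging $d$. Two things go wrong. First, even if you lift each topological generator $w_{1,j}$ (or $w_1$, $w_2$ in case (iii)) elementwise into $\mathrm{GL}_d(A)$, the resulting abstract homomorphism need not be \emph{continuous}: to obtain a pseudocompact $A[[\Delta]]$-module structure on $A^d$ you need $(w_{1,j}-1)$ to act topologically nilpotently, which an arbitrary Nakayama lift will not do. Second, the claim that passing to a non-minimal cover absorbs the $2$-cocycle is never argued; nothing in the structure of $\Delta$ you cite (finiteness of $\Delta/\mathbb{Z}_p^s$ or the annihilator from Proposition~\ref{prop:firststep}) yields this, and I do not see why the obstruction class would change under stabilization.

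The paper avoids obstruction theory entirely by a more elementary mechanism that your proposal circles but never uses: the Weierstrass preparation theorem. Since $M$ is $A$-finite, each $w_{1,j}$ satisfies a monic polynomial over $A$; rewriting in $x_{1,j}=w_{1,j}-1$ and factoring $f_j(x_{1,j})=h_j(x_{1,j})\,u_j(x_{1,j})$ with $h_j$ \emph{distinguished} (monic, lower coefficients in $m_A$) and $u_j$ a unit of $B$, one sees that $h_j(x_{1,j})$ already annihilates $M$. The crucial payoff is that $A[[x_{1,j}]]/(h_j(x_{1,j}))$ is free and finitely generated over $A$, so the quotient ring $D=B/I$ (with $I$ generated by the $h_j(x_{1,j})$, together with the $(w_2^{p^s}-1)^{N'}$ in case (iii)) is an $A$-free, $A$-finite pseudocompact $B$-algebra through which the action on $M$ factors, and $D^z\twoheadrightarrow M$ is the desired cover. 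It is precisely the distinguished shape of $h_j$, not a cohomological vanishing, that produces $A$-freeness; this is the missing ingredient in your argument, and without it the lifting step remains unproved.
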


\begin{rem}
\label{rem:hartshorne}
Let $\Omega$ be a pseudocompact ring that is left Noetherian, and let
$M^\bullet$ be a bounded above complex of pseudocompact $\Omega$-modules
such that $M^i=0$ for $i>n$ 
and the cohomology groups $\HH^i(M^\bullet)$ are finitely generated
as abstract $\Omega$-modules. The construction given by Hartshorne in 
\cite[III Lemma 12.3]{Hartshorne} shows that 
there is a quasi-isomorphism $\rho:L^\bullet\to M^\bullet$ in $C^-(\Omega)$,
where $L^\bullet$ is a bounded above complex of pseudocompact 
$\Omega$-modules that are free and finitely generated as abstract 
$\Omega$-modules and $L^i=0$ for $i>n$. Moreover, we can require
$\rho^{n-1}:L^{n-1}\to M^{n-1}$ to be surjective.
\end{rem}

We first show how Theorem \ref{thm:mainthm} follows from these results when 
$G$ is replaced by $\Delta$ and, if $\Delta$ is as in Lemma \ref{lem:efissocute}(iii),
we assume $r=1$. As before, we write $w_2$ instead of $w_{2,1}$.

Suppose $P^\bullet$ has properties (i), (ii) and (iii) of Theorem \ref{thm:derivedresult}.
Without loss of generality we will suppose that $n_2 = 0$, so that $P^i = 0$ if $i > 0$.

\medbreak
 
\noindent \textit{Step $1$}: 
The complex $P^\bullet$ is isomorphic in $D^-(B)$ to a 
complex $Q^\bullet$ such that $Q^i = 0$ if $i > 0$ or $i<n_1$ and such that
if $n_1\le i\le 0$ then $Q^i$ is annihilated by a closed two-sided ideal $J$ in $B$
of the form described in Proposition \ref{prop:secondprop}.

\medbreak

\noindent\textit{Proof of Step $1$.}
If $\Delta$ is as in Lemma \ref{lem:efissocute}(ii), we can define
$Q^\bullet$ to be the complex obtained from $P^\bullet$ by replacing $P^{n_1}$ 
by $P^{n_1}/\BB^{n_1}(P^\bullet)$ and $P^i$ by $0$ for $i < n_1$.

Suppose now that $\Delta$ is as in Lemma \ref{lem:efissocute}(iii)
and $r=1$. Using Remark \ref{rem:hartshorne}, we can assume that
the terms of $P^\bullet$ are free and finitely generated as abstract
$B$-modules and that $P^i=0$ for $i>0$.
For $i\le 0$, we apply Proposition \ref{prop:firststep} to 
$M = \HH^i(P^\bullet)$ to 
see that there are integers $N(i), N'(i) \ge 1$ such that 
the left ideal $I_{i}=B\cdot(w_2^{N(i)} - 1)^{N'(i)}$ 
annihilates $\HH^i(P^\bullet)$.  Proposition \ref{prop:zerostep} shows that 
$I_{i}$ is a closed two-sided ideal of $B$ that is a
topologically free rank one right $B$-module and a topologically
free rank one left $B$-module. 
Therefore for $i \le 0$, 
the  ideal $J_i=I_i \cdot I_{i+1} \cdots I_1 \cdot I_0$
is a topologically and abstractly free rank one left $B$-module.  
The hypotheses of Proposition \ref{prop:bounded} are now satisfied 
when we let $\Omega = B$.
Therefore  $P^\bullet$ is isomorphic in $D^-(B)$ to a complex 
$Q^\bullet$ such that $Q^i = 0$ for $i > 0$ and $Q^i$ is annihilated by 
$J_i$ for $i \le 0$.  Since $\HH^i(Q^\bullet) = \HH^i(P^\bullet) = 0 $ 
if $i < n_1$, we may replace $Q^{n_1}$ by $Q^{n_1}/\BB^{n_1}(Q^\bullet)$ and $Q^i$ by $0$ for $i < n_1$.
Let $N=\prod_{i=n_1}^{0}N(i)$ and let $N'=\sum_{i=n_1}^{0}N'(i)$ and
define $J=B\cdot(w_2^N - 1)^{N'}$.
Then $J$ is a closed two-sided ideal which
lies inside $J_{n_1}$. Since $J_{n_1}$  annihilates $Q^i$ 
for all $i$, step 1 follows.

\medbreak

\noindent\textit{Step $2$}: 
We can assume that the complex $Q^\bullet$ from
step 1 has the property that all of the $Q^i$ are finitely generated 
as abstract $A$-modules.

\medbreak

\noindent\textit{Proof of Step $2$.}
Let $J$ be the ideal from step 1. 
By Remark \ref{rem:hartshorne}, $Q^\bullet$ is isomorphic in $D^-(B/J)$
to a complex ${Q'}^\bullet$ whose terms are zero in positive degrees
and free and finitely generated as abstract $B/J$-modules
in non-positive degrees. Let ${Q''}^\bullet$ be 
the complex obtained from ${Q'}^\bullet$ by replacing ${Q'}^{n_1}$ 
by ${Q'}^{n_1}/\BB^{n_1}({Q'}^\bullet)$ and ${Q'}^i$ by $0$ for $i < n_1$. 
By replacing $Q^\bullet$ by ${Q''}^\bullet$, we can assume that all of the terms 
$Q^i$ are finitely generated as abstract $B/J$-modules.

Suppose by induction that $n_0$ is an integer such that $Q^{i}$ is finitely generated 
as an abstract $A$-module for all 
integers $i < n_0$.  This hypothesis certainly holds when $n_0 = n_1$, 
since $Q^i = 0$ for $i < n_1$.     
Since $\BB^{n_0}(Q^\bullet)=\mathrm{Image}(Q^{n_0-1} \to Q^{n_0})$ 
and $\HH^{n_0}(Q^\bullet)$ are finitely generated as abstract $A$-modules, also
$\ZZ^{n_0}(Q^\bullet) = \mathrm{Ker}(Q^{n_0} \to Q^{n_0+1})$ is finitely 
generated as an abstract $A$-module.
We apply Proposition \ref{prop:secondprop} to the modules $M = Q^{n_0}$ and 
$T =\ZZ^{n_0}(Q^\bullet) $, where, as arranged above, 
$Q^{n_0}$ is finitely generated as an abstract $B/J$-module.
This shows that there is a pseudocompact
$B/J$-submodule $M'$ of $M$ such that $M' \cap \ZZ^{n_0}(Q^\bullet) = \{0\}$
and $Q^{n_0}/M'$ is finitely generated as an abstract $A$-module.  
The restriction of the differential $\delta^{n_0}:Q^{n_0} \to Q^{n_0 + 1}$
to $M'$ is therefore injective.  This implies that we have an exact sequence 
in $C^-(B/J)$
$$0 \to Q_2^\bullet \to Q^\bullet  \to Q_1^\bullet \to 0$$
in which $Q_2^\bullet$ consists of the two-term complex $M' \to \delta^{n_0}(M')$ in 
degrees $n_0$ and $n_0 + 1$, and the morphism $Q_2^\bullet \to Q^\bullet$ results 
from the natural inclusions of these terms into $Q^{n_0}$ and $Q^{n_0 + 1}$, 
respectively.  Since $Q_2^\bullet$ is acyclic, $Q^\bullet \to Q_1^\bullet$ is a 
quasi-isomorphism.  The term $Q_1^i$ is $Q^i$ if $i < n_0$,
and if $i = n_0$ then $Q_1^{n_0}=Q^{n_0}/M'$ which is finitely generated 
as an abstract $A$-module.  One now replaces $Q^\bullet$ by $Q_1^\bullet$ and 
continues by ascending induction on $n_0$. Hence step 2 follows.
 
\medbreak

\noindent \textit{Step $3$}: 
The complex $Q^\bullet$ from step 2 is isomorphic in $D^-(B)$ to 
a complex $L^\bullet$ such that $L^i=0$ for $i>0$ and $L^i$ is free and
finitely generated as an abstract $A$-module for $i\le 0$.

\medbreak

\noindent\textit{Proof of Step $3$.}
We construct $L^\bullet$ using
Proposition \ref{prop:secondstep} together with a modification of the procedure described in
\cite[III Lemma 12.3]{Hartshorne}.  

If $n\le 0$ is an integer, let $Q^{>n}$ be the truncation of $Q^\bullet$ which results by 
setting to $0$ all terms in degrees $\le n$.  
Suppose by induction that $L^{>n}$ is a complex in
$D^-(B)$ with the following properties.  The terms of $L^{>n}$ are free
and finitely generated as abstract $A$-modules and these terms are $0$ in 
dimensions $\le n$ and in dimensions $>0$. 
Moreover, there is a morphism $\pi^{> n}:L^{> n} \to Q^{>n}$ 
in $C^-(B)$
which induces isomorphisms $\HH^i(L^{>n} ) \to \HH^i(Q^\bullet)$
for $i>n+1$ and for which the induced map $\ZZ^{n+1}(L^{>n}) \to 
\HH^{n+1}(Q^\bullet)$
is surjective.  We can certainly construct such an $L^{>n}$ for $n  = 0$ since $Q^i = 0$ for $i > 0$.

The pseudocompact $B$-module $\ZZ^n(Q^\bullet)$ is finitely generated 
as an abstract $A$-module since it is a submodule of $Q^n$ and $A$ is Noetherian.
Therefore, by Proposition \ref{prop:secondstep}, there exists a 
pseudocompact $B$-module $L_1^n$
that is free and finitely generated as an abstract $A$-module together with a surjection
$\tau_1:L_1^n \to \ZZ^n(Q^\bullet)$.  
Let $\pi^{n+1}:L^{n+1} \to Q^{n+1}$ be the morphism
defined by $\pi^{>n}$. 
Define $M$ to be the pullback:
\begin{equation}
\label{eq:pulldiag}
\xymatrix{
M\ar[r]\ar[d]& (\pi^{n+1})^{-1}(\BB^{n+1}(Q^\bullet))\ar^{\pi^{n+1}}[d]  \\
 Q^n \ar^(.4){\delta^n}[r] &  \BB^{n+1}(Q^\bullet) .}
\end{equation}
Because $(\pi^{n+1})^{-1}(\BB^{n+1}(Q^\bullet))$ is contained in $L^{n+1}$, it
is finitely generated as an abstract
$A$-module.  Since $Q^n$ is also finitely generated as an abstract $A$-module,
it follows that the pseudocompact
$B$-module $M$ is finitely generated as an abstract $A$-module.
Note that the top horizontal morphism in (\ref{eq:pulldiag}) is surjective
because the lower horizontal morphism is surjective.  

By Proposition \ref{prop:secondstep}, there exists a pseudocompact
$B$-module $L_2^n$ that is free and finitely generated as an abstract $A$-module 
together with a surjection $\tau_2:L_2^n \to M$ of pseudocompact $B$-modules.  
This and (\ref{eq:pulldiag}) lead to a diagram of the following kind:
\begin{equation}
\label{eq:pulldiag2}
\xymatrix{
L^n = L^n_1 \oplus L^n_2\ar[r]^(.6){d^n}\ar[d]_{\pi^n}& L^{n+1}\ar[d]^{\pi^{n+1}}  \\
 Q^n \ar[r]^{\delta^n} &  Q^{n+1}.}
\end{equation}
Here the restriction of $d^n:L^n_1 \oplus L^n_2 \to L^{n+1}$ to $L^n_1$ is trivial, 
and the restriction
of $d^n$ to $L^n_2$ is the composition of the surjection $\tau_2:L^n_2 \to M$
with the morphism $M\to \pi_{n+1}^{-1}(\BB^{n+1}(Q^\bullet))$ in the top row
of (\ref{eq:pulldiag}) followed by the inclusion of $\pi_{n+1}^{-1}(\BB^{n+1}(Q^\bullet))$ 
into $L^{n+1}$.  The restriction of the left downward morphism
$\pi^n:L^n = L^n_1 \oplus L^n_2 \to Q^n$ to $L^n_1$ is the composition of 
$\tau_1:L_1^n \to \ZZ^n(Q^\bullet)$ with the inclusion of $\ZZ^n(Q^\bullet)$
into $Q^n$, and the restriction of this morphism to $L^n_2$ results
from the surjection $\tau_2:L^n_2 \to M$ followed by the left downward 
morphism in (\ref{eq:pulldiag}).

By construction, the diagram (\ref{eq:pulldiag2}) is commutative, and gives
a morphism $\pi^{>(n-1)}:L^{>(n-1)} \to Q^{>(n-1)}$ in $C^-(B)$.  We assumed
that the morphism $\ZZ^{n+1}(L^\bullet) \to \HH^{n+1}(Q^\bullet)$, which is induced
by $\pi^{>n}$, is surjective.  Since the top
horizontal morphism in (\ref{eq:pulldiag}) is surjective, the image of
$d^n:L^n \to L^{n+1}$ is $(\pi^{n+1})^{-1}(B^{n+1}(Q^\bullet)) \subset L^{n+1}$.
It follows that $\pi^{>(n-1)}:L^{>(n-1)} \to Q^{>(n-1)}$ induces an isomorphism
$$\HH^{n+1}(L^{>(n-1)}) \to \HH^{n+1}(Q^\bullet).$$
Because $L_1^n \subset \ZZ^n(L^{>(n-1)})$,
we also have that $\pi^n:\ZZ^n(L^{>(n-1)}) \to \ZZ^n(Q^\bullet)$
is surjective.  So since $L^n$ is free and finitely generated as an abstract $A$-module,
we conclude by induction that we can construct a bounded above
complex $L^\bullet$ in $D^-(B)$ whose terms are free and finitely generated as 
abstract $A$-modules
together with a quasi-isomorphism  $L^\bullet \to Q^\bullet$ in $C^-(B)$.
This completes the proof of step 3.
 
 \medbreak
 
Since $L^\bullet$ from step 3 is isomorphic to $P^\bullet$ in $D^-(B)$,
$L^\bullet$ satisfies hypotheses (ii) and (iii) of Theorem \ref{thm:derivedresult}.
By Definition \ref{def:quasilifts}(a), this implies that
$L^\bullet$ has finite pseudocompact $A$-tor dimension at $n_1$.
Since all the terms of $L^\bullet$ are topologically free by Remark \ref{rem:newandimproved}(v),
it follows by Remark \ref{rem:dumbdumb} that the bounded complex 
$C^\bullet$ that is obtained from $L^\bullet$ by replacing $L^{n_1}$ 
by $L^{n_1}/\BB^{n_1}(L^\bullet)$ and $L^i$ by $0$ for $i < n_1$,
is quasi-isomorphic to $L^\bullet$ and has topologically free
pseudocompact terms over $A$. By Remark \ref{rem:newandimproved}(v) and step 3, 
this implies that all terms of $C^\bullet$ are free and finitely generated as abstract
$A$-modules.

Because of Corollary \ref{cor:reductioabsurdum},
this completes the proof of Theorem \ref{thm:mainthm},
assuming Propositions \ref{prop:zerostep} - 
\ref{prop:secondstep} and assuming $r=1$ if $G$ is as in case B.
We will prove these propositions in \S\ref{ss:zerostep} - \S\ref{ss:secondstep}
and discuss the case $r>1$ for $G$ as in case B in \S\ref{ss:generalR}.


\subsection{Proof of Proposition \ref{prop:zerostep}}
\label{ss:zerostep}

Suppose $\Delta$ is as in Lemma \ref{lem:efissocute}(iii) and $r=1$. 
Write $w_2$ instead of $w_{2,1}$, and 
let $J=B\cdot (w_2^N-1)^{N'}$ be as in the 
statement of Proposition \ref{prop:zerostep}.
The key to proving this proposition is to
uniquely express each element in $B=A[[\Delta]]$
by a unique convergent power series as in Lemma \ref{lem:represent} below.

We first note that the left ideal $J=B\cdot (w_2^N-1)^{N'}$ 
is a two-sided ideal in $B$, since 
\begin{equation}
\label{eq:commute}
(w_2^N-1)^{N'}\,\overline{\Phi}^{\,-1} = \overline{\Phi} \,(w_2^{\ell^f N}-1)^{N'}
=\overline{\Phi} \left(\sum_{i=0}^{\ell^f -1} w_2^{i N}\right)^{N'} (w_2^N-1)^{N'}.
\end{equation}
Suppose that in the description of $\Delta_0$
in Lemma \ref{lem:efissocute}(iii), the finite cyclic $p'$-group $\tilde{\Delta}_0$ of order 
$d$ is generated by $\sigma\in\Delta$.

\begin{lemma}
\label{lem:represent}
Write $N = p^s t$ where $s \ge 0$ and $t$ is prime to $p $. Then 
$w_2^N - 1 = (w_2^{p^s} - 1)\cdot v$
where $v$ is a unit of $B$ commuting with $w_2$,
so $J=B \cdot (w_2^{p^s} - 1)^{N'}$.
Every element $f$ of $B$ can be written in a 
unique way as a convergent power series
\begin{equation}
\label{eq:theform}
f=\sum \; z_{u,a,\xi,b,c} \;\sigma^u\; (w_1 - 1)^a \;\xi \;
w_2^b\; (w_2^{p^s} - 1)^c,
\end{equation}
in which the sum ranges over all tuples $(u,a,\xi,b,c)$
with $0\le u\le d-1$, $a\ge 0$, $\xi\in\tilde{\Delta}_1$, $0\le b\le p^s - 1$
and $c \ge 0$, and each $z_{u,a,\xi,b,c}$ lies in $A$.
Moreover, any choice of $z_{u,a,\xi,b,c}\in A$ 
defines an element $f\in B$.
\end{lemma}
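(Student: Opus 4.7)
The plan is to establish the two assertions separately. For the factorization of $w_2^N-1$, I would write $N=p^s t$ with $\gcd(t,p)=1$ and apply the identity $x^t-1=(x-1)(x^{t-1}+\cdots+1)$ at $x=w_2^{p^s}$, obtaining
$$w_2^N-1 \;=\; (w_2^{p^s}-1)\cdot v,\qquad v:=\sum_{i=0}^{t-1} w_2^{p^s i}.$$
The element $v$ lies in the commutative complete local subring $A[[w_2-1]]\subset B$ (so in particular commutes with $w_2$) and reduces modulo its maximal ideal $(m_A,\,w_2-1)$ to $t$, which is a unit of $A$ since $\gcd(t,p)=1$ and $k$ has characteristic $p$. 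Hence $v$ is a two-sided unit of $B$ commuting with $w_2^{p^s}-1$; the relation $(w_2^N-1)^{N'}=(w_2^{p^s}-1)^{N'}v^{N'}$ together with this commutation permits moving $v^{\pm N'}$ past $(w_2^{p^s}-1)^{N'}$, yielding $J=B\cdot(w_2^N-1)^{N'}=B\cdot(w_2^{p^s}-1)^{N'}$.

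For the representation of elements of $B$, I would exploit the semidirect decomposition $\Delta=\Delta_1\rtimes\Delta_0$. Multiplication $\Delta_0\times\Delta_1\to\Delta$ is a homeomorphism of profinite spaces, which induces an isomorphism of topological $A$-modules (not of $A$-algebras)
$$A[[\Delta_0]]\,\hat{\otimes}_A\,A[[\Delta_1]] \;\xrightarrow{\sim}\; A[[\Delta]] \;=\; B.$$
It therefore suffices to exhibit topological $A$-bases of the two factors and multiply. The $\Delta_0=\langle w_1\rangle\times\tilde{\Delta}_0$ factor is easy: the standard isomorphism $A[[\langle w_1\rangle]]\cong A[[w_1-1]]$ paired with the $A$-basis $\{\sigma^u\}_{u=0}^{d-1}$ of $A[\tilde{\Delta}_0]$ gives the topological $A$-basis $\{\sigma^u(w_1-1)^a : 0\le u\le d-1,\ a\ge 0\}$ of $A[[\Delta_0]]$. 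For $\Delta_1=\langle w_2\rangle\times\tilde{\Delta}_1$, tensoring with the $A$-free group ring $A[\tilde{\Delta}_1]$ reduces the problem to showing that $\{w_2^b(w_2^{p^s}-1)^c : 0\le b<p^s,\ c\ge 0\}$ is a topological $A$-basis of $A[[w_2-1]]$.

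The main obstacle is this last basis claim, which I plan to settle via Weierstrass division. Writing $T=w_2-1$, the expansion
$$w_2^{p^s}-1 \;=\; (1+T)^{p^s}-1 \;=\; T^{p^s}+\sum_{i=1}^{p^s-1}\binom{p^s}{i}\,T^i$$
is a distinguished polynomial of degree $p^s$ in $A[[T]]$, because $p\mid\binom{p^s}{i}$ for $0<i<p^s$ and $p\in m_A$. Weierstrass division over the complete local ring $A$ then yields, for every $f\in A[[T]]$, a unique convergent expansion $f=\sum_{c\ge 0}(w_2^{p^s}-1)^c\,r_c(T)$ with each $r_c\in A[T]$ of degree $<p^s$; conversely, any such sequence $(r_c)_{c\ge 0}$ determines an element of $A[[T]]$. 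Since $\{1,w_2,\ldots,w_2^{p^s-1}\}$ and $\{1,T,\ldots,T^{p^s-1}\}$ are two $A$-bases of the rank-$p^s$ free $A$-module of polynomials of degree $<p^s$, rewriting each $r_c$ as $\sum_{b=0}^{p^s-1}z_{c,b}\,w_2^b$ converts the Weierstrass expansion to the claimed form. Reassembling the bases of $A[[\Delta_0]]$ and $A[[\Delta_1]]$ through the completed tensor product isomorphism above then produces precisely the expansion of the lemma, with existence, uniqueness, and convergence of the coefficients inherited at each stage.
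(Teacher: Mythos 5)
Your proposal is correct, but it takes a genuinely different route from the paper. The paper proves both assertions by passing to Artinian quotients: for the unit claim it observes that $v\equiv t \pmod{B\cdot(w_2-1)}$ and that $B\cdot(w_2-1)\cap pB$ has nilpotent image in every $A'[\Gamma(m,m')]$, and for the representation it sets up an explicit cofinal system $H(m,m')$ of open normal subgroups and asserts that the expansion is read off from the projective limit $B=\varprojlim A'[\Gamma(m,m')]$, leaving the finite-level basis computation implicit. You instead work globally in the closed commutative subring $A[[\langle w_2\rangle]]\cong A[[T]]$: the unit claim becomes the one-line observation that $v$ reduces to $t\in k^\times$ modulo the maximal ideal of this complete local ring, and the representation statement is reduced, via the homeomorphism $\Delta_0\times\Delta_1\to\Delta$ and the resulting topological $A$-module isomorphism $A[[\Delta_0]]\hat{\otimes}_A A[[\Delta_1]]\cong B$, to iterated Weierstrass division in $A[[T]]$ by the distinguished polynomial $(1+T)^{p^s}-1$. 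Both arguments are sound; yours is cleaner and more self-contained (and nicely parallels the Weierstrass-preparation argument the paper itself uses later in the proof of Proposition \ref{prop:secondstep}), while the paper's finite-quotient formulation transfers a bit more transparently to the $r>1$ case treated in \S\ref{ss:generalR}. One small step worth making explicit in your write-up: both the convergence of $\sum_c r_c\,(w_2^{p^s}-1)^c$ for an arbitrary bounded-degree sequence $(r_c)$ and the uniqueness of the expansion rest on the fact that $(w_2^{p^s}-1)\in(T)$, hence $(w_2^{p^s}-1)^c\to 0$ in the $(m_A,T)$-adic topology; this is what lets the tail $q_{C}\,(w_2^{p^s}-1)^{C}$ produced by iterated division tend to $0$ and lets one strip off coefficients one at a time (using that the distinguished polynomial is a nonzerodivisor) when proving uniqueness.
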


\begin{proof}  A cofinal system of closed normal finite index 
subgroups of $\Delta$ is given by the groups 
$H(m,m')$ that are topologically generated 
by $w_2^{p^{s+m}}$ and $w_1^{p^{m'}}$, 
where $m\ge 0$ is arbitrary and $m'$ is chosen so that 
$\ell^{fdp^{m'}} \equiv 1$ mod $p^{s+m}$ and the order of the automorphism of 
the finite group $\tilde{\Delta}_1$ induced by the pro-$p$ element $w_1$ divides 
$p^{m'}$.
Note that these requirements on $m'$ ensure that each $H(m,m')$ is
normal in $\Delta$.
Define $\Gamma(m,m') = \Delta/H(m,m')$.

In $B$, we have
$$w_2^N - 1 = w_2^{p^s t}-1 = (w_2^{p^s} - 1)\cdot v,$$
where $v=1 + w_2^{p^s} + \cdots + w_2^{p^s \cdot (t-1)}$ is congruent to
$t$ mod the two-sided ideal $B\cdot (w_2-1)$. Since $t\not\equiv 0\mod p$,
$v$ has invertible image in $B/\left(B\cdot (w_2-1)\cap pB\right)$. 
Since the two-sided ideal
$B\cdot (w_2-1)\cap pB$ has nilpotent image in $A'[\Gamma(m,m')]$
for all discrete Artinian quotients $A'$ of $A$,
this implies that $v$ is a unit in $B$.

Since $B=A[[\Delta]]$ is the projective limit of the quotient rings $A'[\Gamma(m,m')]$,
as $A'$ ranges over all discrete Artinian quotients of $A$ and $(m,m')$ ranges over 
all pairs of integers satisfying the above conditions, it follows that every
$f\in B$ can be written in a unique way as a power series as in (\ref{eq:theform})
and every such power series converges to an element in $B$.
 \end{proof}
 
\begin{rem}
\label{rem:alternate}
By a similar argument, every element $f$ of $B$ can be written in a 
unique way as a convergent power series
\begin{equation}
\label{eq:theformalt}
f = \sum\; 
\omega_{u,a,\xi,b,c}\;(w_2^{p^s} - 1)^c \;w_2^b\;\,
\xi\; (w_1 - 1)^a\; \sigma^u,
\end{equation}
in which the sum ranges over all tuples $(u,a,\xi,b,c)$
with $0\le u\le d-1$, $a\ge 0$, $\xi\in\tilde{\Delta}_1$, $0\le b\le p^s - 1$
and $c \ge 0$, and each $\omega_{a,b,\xi,c,u}$ lies in $A$. 
Moreover, any choice of  $\omega_{a,b,\xi,c,u} \in A$
defines and element in $B$.
\end{rem} 

To prove Proposition \ref{prop:zerostep}, let $J=B\cdot (w_2^N-1)^{N'}$.
By $(\ref{eq:commute})$, $J$ is a two-sided ideal in $B$.
By Remark \ref{rem:useful}(i),  $J$ is closed in $B$,
which implies that $\overline{B}=B/J$ is a pseudocompact $A$-algebra. 
By using Lemma \ref{lem:represent} (resp. Remark \ref{rem:alternate}), we see
that right (resp. left) multiplication with $(w_2^{p^s} - 1)^{N'}$
is an injective homomorphism $B \to B$.
This shows that $J$ is an abstractly free rank one left (resp. right)
$B$-module. By Remark \ref{rem:useful}(iii), 
this proves Proposition \ref{prop:zerostep}.

\subsection{Proof of Proposition \ref{prop:firststep}}
\label{ss:firststep}

Suppose $\Delta$ is as in Lemma \ref{lem:efissocute}(iii) and $r=1$.
Write $w_2$ instead of $w_{2,1}$. 
We will prove Proposition \ref{prop:firststep} by proving Lemmas \ref{lem:step11} and
\ref{lem:step12} below, which enable us to essentially reduce to the case when
$A$ is a field.

\begin{lemma}
\label{lem:step11}
Let $L$ be a field and
let $M$ be a pseudocompact $L[[\Delta]]$-module that is finite dimensional 
as $L$-vector space.
There exist positive integers $N,N'$ which are bounded functions of 
$\mathrm{dim}_L\,M$ such that $(w_2^N-1)^{N'}\cdot M=\{0\}$. 
\end{lemma}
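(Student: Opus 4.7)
The plan is to reduce the claim to linear algebra for a single operator, namely the action of $w_2$ on the finite-dimensional vector space $M$. Since a field which is also a pseudocompact ring must carry the discrete topology, and $M$ has finite length as a pseudocompact $L$-module and is therefore discrete, the continuous action of $\Delta$ on $M$ restricts to a continuous homomorphism $\overline{\langle w_2\rangle}\cong\mathbb{Z}_p\to\mathrm{Aut}_L(M)$ with discrete target. Its kernel is open, so it contains $p^{m_0}\mathbb{Z}_p$ for some $m_0\ge 0$ (take $m_0$ large enough that $w_2^{p^{m_0}}$ fixes each vector in an $L$-basis of $M$). Hence $w_2^{p^{m_0}}$ acts as the identity on $M$, and the minimal polynomial $q(x)\in L[x]$ of the action of $w_2$ on $M$ divides $x^{p^{m_0}}-1$ and has degree at most $d:=\dim_L M$.

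If $\mathrm{char}(L)=p$, then $x^{p^{m_0}}-1=(x-1)^{p^{m_0}}$, so $q(x)=(x-1)^{m_1}$ for some $m_1\le d$, which gives $(w_2-1)^d\cdot M=0$. Hence one may take $N=1$ and $N'=d$, both depending only on $d$. If instead $\mathrm{char}(L)\neq p$, then the finite cyclic group through which $\overline{\langle w_2\rangle}$ acts on $M$ has order $p^{m_0}$ invertible in $L$, so by Maschke's theorem the action of $w_2$ on $M$ is semisimple; therefore $(w_2^N-1)^{N'}\cdot M=0$ is equivalent to $w_2^N$ acting trivially on $M$, i.e., to $p^{m_0}\mid N$. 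Some eigenvalue of $w_2$ over an algebraic closure of $L$ must be a primitive $p^{m_0}$-th root of unity $\zeta$, and the minimal polynomial of $\zeta$ over $L$ divides $q(x)$, so $[L(\zeta):L]\le d$. This bounds $p^{m_0}$ in terms of $L$ and $d$, and one takes $N=p^{m_0}$ with $N'=1$.

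The main obstacle is the second case: the bound on the exponent $p^{m_0}$ is not uniform in $L$, but depends on the arithmetic of the cyclotomic tower $L(\zeta_{p^\infty})/L$. Accordingly, the phrase ``bounded functions of $\dim_L M$'' should be read as bounded in terms of $\dim_L M$ once $L$ has been fixed. In the application to Proposition~\ref{prop:firststep}, the field $L$ arises as a residue-field quotient of the local ring $A\in\hat{\mathcal{C}}$, so $L$ has characteristic $p$ and only the first case is needed, giving the clean uniform bound $N=1$, $N'=d=\dim_L M$.
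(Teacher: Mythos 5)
Your opening observations are correct: a finite-length pseudocompact $L[[\Delta]]$-module over a discrete field is discrete, continuity forces the action of $\overline{\langle w_2\rangle}\cong\mathbb{Z}_p$ to factor through a finite quotient, and so $w_2$ acts with finite $p$-power order $p^{m_0}$ on $M$. Your treatment of the case $\mathrm{char}(L)=p$ is also fine and gives $(w_2-1)^{\dim_L M}\cdot M=0$.

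The problem is how you dispose of the case $\mathrm{char}(L)\neq p$, and it is not merely a matter of interpretation. The bound must be uniform over all fields $L$: Corollary \ref{cor:step11} applies the lemma simultaneously to the residue fields $k(\mathfrak{p})$ of \emph{all} primes $\mathfrak{p}$ of $A$ and needs a single pair $(N,N')$ that works for all of them at once, and the proof of Proposition \ref{prop:firststep} then feeds the resulting single operator $f=(w_2^N-1)^{N''}$ into Lemma \ref{lem:step12}. Your closing claim, that in this application $L$ always has characteristic $p$, is false: residue fields of $A$ at non-maximal primes need not have characteristic $p$ (already for $A=W=\mathbb{Z}_p$ one has $k((0))=\mathbb{Q}_p$). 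So the second case genuinely occurs, and the bound $[L(\zeta):L]\le\dim_L M$ does not control $p^{m_0}$ for a general $L$.

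The ingredient you never invoke --- and the heart of the paper's argument --- is that $M$ is a module for $L[[\Delta]]$, not merely for $L[[\overline{\langle w_2\rangle}]]$, so the conjugation relation $w_1w_2w_1^{-1}=w_2^{\ell^{fd}}$ constrains the $w_2$-action. If $h(x)\in L[x]$ is monic of degree at most $(\dim_L M)^2$ with $h(w_2)M=0$ and with $h(x)$ not divisible by $x$, then $h(w_2^{\ell^{fd}})M=w_1h(w_2)w_1^{-1}M=0$, and more generally $h(w_2^{\ell^{fdn}})M=0$ for all $n\ge 0$. The monic generator $d(x)$ of the ideal generated by all the $h(x^{\ell^{fdn}})$ then satisfies $d(x)\mid d(x^{\ell^{fd}})$, so its root set in $\overline{L}$ is stable under $\rho\mapsto\rho^{\ell^{fd}}$; since that set has at most $\deg h(x)\le(\dim_L M)^2$ elements, each orbit is finite of size at most $(\dim_L M)^2$, forcing every root to be a root of unity whose order is bounded by a function of $\dim_L M$ and of the fixed constants $\ell,f,d$ attached to $\Delta$. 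This produces $(N,N')$ bounded in terms of $\dim_L M$ alone, uniformly in $L$ and with no case division on characteristic. That step is the gap in your proof.
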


\begin{proof}
The action of $w_2$ on $M$ defines an automorphism  in 
$\mathrm{Aut}_L(M)\subset \mathrm{End}_L(M)$.
This implies that there is a monic polynomial $h(x)\in L[x]$ of degree 
less than or equal to $(\mathrm{dim}_L\,M)^2$
such that $h(w_2)\cdot M=0$. Since $w_2$ 
is a unit, we can assume that $h(x)$ is not divisible by $x$. Since 
$w_1h(w_2)w_1^{-1}=h(w_2^{\ell^{fd}})$, it follows that $h(w_2^{\ell^{fd}})$ 
also annihilates $M$. 

Let $I$ be the 
ideal of $L[x]$ that is (abstractly) generated by $\{h(x^{\ell^{fdn}})\;|\; n\ge 0\}$. 
Then $f(w_2)\cdot M=0$ for all $f(x)\in I$.
Since $L[x]$ is a principal ideal 
domain, $I$ is generated by a single polynomial $d(x)\in L[x]$. Moreover, since 
$x$ does not divide $h(x)$, $x$ also does not 
divide $d(x)$. Because $d(x^{\ell^{fd}})\in I$, $d(x)$ divides $d(x^{\ell^{fd}})$. 
This means that if  $\{\rho_1,\ldots,\rho_m\}$ are
the roots of $d(x)$, then for each $1\le i\le m$, $\{\rho_i^{\ell^{fdn}}\;|\;n\ge 0\}$ is 
contained in $\{\rho_1,\ldots,\rho_m\}$.
Note that $m\le \mathrm{deg}\,d(x)\le \mathrm{deg}\, h(x)\le (\mathrm{dim}_L\,M)^2$. 
This implies that there exists a positive integer $s$, which is a bounded function of 
$\mathrm{dim}_L\,M$, such that each $\rho_i$ is a root of unity of finite 
order bounded by $\ell^{fds}$. Thus $d(x)$ divides a polynomial of the form 
$(x^N-1)^{N'}$ where $N,N'$ are bounded functions of $\mathrm{dim}_L\,M$.
\end{proof}

\begin{cor}
\label{cor:step11}
Suppose 
$M$ is
a pseudocompact $A[[\Delta]]$-module that is finitely generated as an abstract 
$A$-module.
There exist positive  integers $N,N''$ that are bounded functions of the number of 
abstract
generators of $M$ over $A$ such that $(w_2^N-1)^{N''}$ annihilates 
$k(\mathfrak{p})\hat{\otimes}_AM$ for all prime ideals $\mathfrak{p}$
of $A$, where $k(\mathfrak{p})$ denotes the residue field of $\mathfrak{p}$.
\end{cor}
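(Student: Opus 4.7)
The plan is to apply Lemma \ref{lem:step11} fiber by fiber over $\mathrm{Spec}(A)$ and exploit the fact that the integers it produces are uniformly bounded across residue fields once the dimension is bounded. Writing $n$ for the number of abstract $A$-generators of $M$, the images of these generators span $M_{\mathfrak{p}} := k(\mathfrak{p}) \hat{\otimes}_A M$ over $k(\mathfrak{p})$, so $\mathrm{dim}_{k(\mathfrak{p})}\,M_{\mathfrak{p}} \le n$ for every prime $\mathfrak{p}$ of $A$.

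For each $\mathfrak{p}$ the module $M_{\mathfrak{p}}$ is a pseudocompact $k(\mathfrak{p})[[\Delta]]$-module, so Lemma \ref{lem:step11} supplies positive integers $N_{\mathfrak{p}}, N'_{\mathfrak{p}}$ with $(w_2^{N_{\mathfrak{p}}} - 1)^{N'_{\mathfrak{p}}} M_{\mathfrak{p}} = 0$. The key point I would stress is that these integers are bounded by functions of $\mathrm{dim}_{k(\mathfrak{p})}\,M_{\mathfrak{p}}$ alone, independent of the field $k(\mathfrak{p})$: this is visible in the proof of Lemma \ref{lem:step11}, since both the degree of the annihilating polynomial $h(x)$ and the Frobenius-orbit argument bounding the order of its roots depend only on the dimension, not on the ambient field. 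Consequently there exist absolute constants $B_1(n), B_2(n)$ such that $N_{\mathfrak{p}} \le B_1(n)$ and $N'_{\mathfrak{p}} \le B_2(n)$ for every prime $\mathfrak{p}$.

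Setting $N := \mathrm{lcm}(1, 2, \ldots, B_1(n))$ and $N'' := B_2(n)$ gives positive integers depending only on $n$. For every $\mathfrak{p}$, since $N_{\mathfrak{p}} \mid N$ and $N'_{\mathfrak{p}} \le N''$, the factorization
\[
w_2^N - 1 \;=\; (w_2^{N_{\mathfrak{p}}} - 1)\cdot\bigl(1 + w_2^{N_{\mathfrak{p}}} + \cdots + w_2^{(N/N_{\mathfrak{p}} - 1)N_{\mathfrak{p}}}\bigr)
\]
in the commutative subring $A[w_2] \subset A[[\Delta]]$ shows that $(w_2^{N_{\mathfrak{p}}} - 1)^{N'_{\mathfrak{p}}}$ divides $(w_2^N - 1)^{N''}$, so the latter annihilates $M_{\mathfrak{p}}$ as required. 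I do not anticipate any substantive obstacle: the whole content is a uniformity observation for Lemma \ref{lem:step11}, together with the routine fact that a generating set for $M$ over $A$ descends to one for $M_{\mathfrak{p}}$ over $k(\mathfrak{p})$.
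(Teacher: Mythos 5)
Your proposal is correct and follows the same route as the paper's own proof, which simply reduces to Lemma~\ref{lem:step11} after noting $\dim_{k(\mathfrak{p})} (k(\mathfrak{p})\hat{\otimes}_A M)\le n$. You spell out more explicitly the step the paper leaves tacit --- that the bound asserted in Lemma~\ref{lem:step11} is uniform in the field, and that one must pass to an $\mathrm{lcm}$ to obtain a single pair $(N,N'')$ valid for all primes simultaneously --- but this uniformity and $\mathrm{lcm}$ device are already implicit in the proof of Lemma~\ref{lem:step11}, so you are making the paper's argument precise rather than taking a different path.
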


\begin{proof}
Note that $\mathrm{dim}_{k(\mathfrak{p})}(k(\mathfrak{p})\otimes_A M)$ is less than or 
equal to the number of generators of $M$ as an abstract $A$-module. Hence we can 
use Lemma \ref{lem:step11} with $k(\mathfrak{p})$ for $L$ and 
$k(\mathfrak{p})\hat{\otimes}_A M$ for $M$.
\end{proof}

\begin{lemma}
\label{lem:step12}
Let $M$ be as in Corollary $\ref{cor:step11}$. 
Suppose $f\in\mathrm{End}_A(M)$
 and that for all prime ideals $\mathfrak{p}$ of $A$ we have
\begin{equation}
\label{eq:step12}
f(M)_{\mathfrak{p}} \subseteq \mathfrak{p}\cdot M_{\mathfrak{p}}
\end{equation}
where the subscript $\mathfrak{p}$ means localization at the prime ideal 
$\mathfrak{p}$. Then $f$ is nilpotent.
\end{lemma}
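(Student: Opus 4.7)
The plan is to argue by Noetherian induction on the support $\mathrm{Supp}_A(M) \subseteq \mathrm{Spec}(A)$, which is a Noetherian topological space since $A$ is Noetherian. Let $(M,f)$ be a counterexample with $\mathrm{Supp}(M)$ minimal among such; then $\mathrm{Supp}(M)\neq\emptyset$, and I pick $\mathfrak{p}$ minimal in $\mathrm{Supp}(M)$. The key observation at this $\mathfrak{p}$ is a ``finite length kill'': because $\mathfrak{p}$ is minimal in $\mathrm{Supp}(M) = V(\mathrm{Ann}_A M)$, the localization $M_{\mathfrak{p}}$ is a finitely generated $A_{\mathfrak{p}}$-module whose support in $\mathrm{Spec}(A_{\mathfrak{p}})$ is the single closed point $\mathfrak{p}A_{\mathfrak{p}}$. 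Hence $M_{\mathfrak{p}}$ has finite length $\ell$ over $A_{\mathfrak{p}}$ and $(\mathfrak{p}A_{\mathfrak{p}})^{\ell}\cdot M_{\mathfrak{p}}=0$.

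The hypothesis \eqref{eq:step12} at $\mathfrak{p}$ reads $f_{\mathfrak{p}}(M_{\mathfrak{p}}) \subseteq \mathfrak{p} A_{\mathfrak{p}}\cdot M_{\mathfrak{p}}$. Iterating $\ell$ times and using the $A$-linearity of $f$ gives $f_{\mathfrak{p}}^{\ell}(M_{\mathfrak{p}}) \subseteq (\mathfrak{p}A_{\mathfrak{p}})^{\ell} M_{\mathfrak{p}} = 0$. Now set $N := f^{\ell}(M) \subseteq M$; this is an $A$-submodule stable under $f$. The previous computation localizes to $N_{\mathfrak{p}} = 0$, so $\mathfrak{p}\notin \mathrm{Supp}(N)$, and $\mathrm{Supp}(N) \subsetneq \mathrm{Supp}(M)$ strictly as closed subsets of $\mathrm{Spec}(A)$.

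It remains to check that $f|_N$ again satisfies hypothesis \eqref{eq:step12}; then the inductive hypothesis applied to $(N,f|_N)$ yields $f^{\ell'}(N) = 0$ for some $\ell' \geq 1$, whence $f^{\ell+\ell'}(M) = 0$, contradicting the choice of $M$. For any prime $\mathfrak{q}$ of $A$, the $A$-linearity of $f$ together with \eqref{eq:step12} at $\mathfrak{q}$ gives
\[
f_{\mathfrak{q}}(N_{\mathfrak{q}}) = f_{\mathfrak{q}}^{\ell+1}(M_{\mathfrak{q}}) = f_{\mathfrak{q}}^{\ell}\bigl(f_{\mathfrak{q}}(M_{\mathfrak{q}})\bigr) \subseteq f_{\mathfrak{q}}^{\ell}(\mathfrak{q} M_{\mathfrak{q}}) = \mathfrak{q}\,f_{\mathfrak{q}}^{\ell}(M_{\mathfrak{q}}) = \mathfrak{q}\,N_{\mathfrak{q}},
\]
which is what we need.

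There is not much of a real obstacle once this strategy is in hand. The one conceptual point worth flagging is that one genuinely must use \eqref{eq:step12} at primes other than the unique maximal ideal $\mathfrak{m}_A$: applying it only at $\mathfrak{m}_A$ would give merely $f(M) \subseteq \mathfrak{m}_A M$, which is too weak to force nilpotence (e.g.\ $A = \mathbb{Z}_p$, $M = A$, $f = $ multiplication by $p$). The Noetherian induction makes essential use of the hypothesis at every minimal prime of each successive support to peel off one closed stratum at a time.
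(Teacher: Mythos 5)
Your proof is correct. It establishes the same commutative-algebra fact, and the core local computation (iterate the hypothesis at a prime to conclude that $f_{\mathfrak{p}}$ acts nilpotently on $M_{\mathfrak{p}}$) is shared with the paper, but the global bookkeeping is genuinely different. The paper organizes the argument as an induction on the codimension $t$ of prime ideals: at each stage it constructs an auxiliary ideal $I_{t-1}$ generated by annihilating elements $b(\mathfrak{q})$ chosen at primes of codimension at most $t-1$, shows $(A/I_{t-1})_{\mathfrak{p}}$ has dimension $0$ for $\mathfrak{p}$ of codimension $t$, and needs to observe separately that $f^{n_{t-1}}(M)$ meets only finitely many primes of codimension $t$; the process terminates because $A$, being complete local Noetherian, has finite Krull dimension. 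You instead run a Noetherian induction on the closed set $\mathrm{Supp}(M)\subseteq\mathrm{Spec}(A)$, peel off one minimal prime $\mathfrak{p}$ of the support at a time by exploiting that $M_{\mathfrak{p}}$ has finite length there, and pass to the $f$-stable $A$-submodule $N=f^{\ell}(M)$ with strictly smaller support. This is a bit leaner: it avoids the auxiliary ideals and the codimension counting, and only needs that $\mathrm{Spec}(A)$ (or rather its closed subsets) is a Noetherian topological space rather than that $A$ has finite Krull dimension. The one cosmetic point worth spelling out when you write this up is that $N$ is again finitely generated over $A$ (as an $A$-submodule of a finitely generated module over a Noetherian ring) and $f$-stable, so the inductive hypothesis genuinely applies to the pair $(N,f|_N)$. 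Your closing remark about needing the hypothesis at primes beyond the maximal ideal is well taken and matches the role that localization at nonmaximal primes plays in both proofs.
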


\begin{proof}
Let first $\mathfrak{p}$ be a prime ideal of $A$ of codimension $0$. Then  
$\mathrm{dim}\, A_{\mathfrak{p}}=0$ so that $A_{\mathfrak{p}}$ is Artinian. 
Because $f(M)$ is finitely generated as an abstract
$A$-module, this implies that 
$f(M)_{\mathfrak{p}}$ is an Artinian $A_{\mathfrak{p}}$-module. Since by assumption,
$f(M)_{\mathfrak{p}} \subseteq \mathfrak{p}\cdot M_{\mathfrak{p}}$, we obtain for all 
positive integers $n$ that $f^n(M)_{\mathfrak{p}} \subseteq \mathfrak{p}^n\cdot 
M_{\mathfrak{p}}$. Thus there is a positive integer $n(\mathfrak{p})$ with 
$f^{n(\mathfrak{p})}(M)_{\mathfrak{p}}=0$.
Since $A$ is Noetherian, there are only finitely many prime ideals of $A$ of codimension $0$.
Hence there is a positive integer $n_0$ such that $f^{n_0}(M)_{\mathfrak{p}}=0$ for all 
prime ideals $\mathfrak{p}$ of $A$ of codimension $0$.

Now let $t\ge 1$, and suppose by induction that there is an integer $n_{t-1}$ such that 
$f^{n_{t-1}}(M)_{\mathfrak{q}}=0$ for all prime ideals $\mathfrak{q}$ of $A$ of  codimension 
at most $t-1$.
In particular, for each prime ideal $\mathfrak{q}$ of $A$ of codimension at most $t-1$ there exists
an element $b(\mathfrak{q})\in A$ such that $b(\mathfrak{q})\not\in\mathfrak{q}$ and
$b(\mathfrak{q})\cdot f^{n_{t-1}}(M)=0$. Let $I_{t-1}$ be the ideal of $A$ that is
abstractly generated by all elements $b(\mathfrak{q})$ as $\mathfrak{q}$ ranges over all 
prime ideals  of $A$ of codimension at most $t-1$.

Let $\mathfrak{p}$ be a prime ideal of $A$ of codimension $t$. 
Using that the non-zero prime ideals of 
$(A/I_{t-1})_{\mathfrak{p}}\cong A_{\mathfrak{p}}/I_{t-1} A_{\mathfrak{p}}$ 
correspond to the prime ideals of $A_{\mathfrak{p}}$ 
containing $I_{t-1} A_{\mathfrak{p}}$ and that the prime ideals of 
$A_{\mathfrak{p}}$ correspond 
to the prime ideals of $A$ contained in $\mathfrak{p}$, one shows that
$(A/I_{t-1})_{\mathfrak{p}}$ has dimension $0$.
Since $f^{n_{t-1}}(M)$ is finitely generated as an abstract $(A/I_{t-1})$-module, 
one shows, similarly to the first paragraph of this proof, that
there is a positive integer $n(\mathfrak{p})$ with 
$f^{n(\mathfrak{p})}(M)_{\mathfrak{p}}=0$.
Since $f^{n_{t-1}}(M)$ is supported in codimension $t$, 
it follows that
there is a positive  integer $n_t$ such that $f^{n_t}(M)_{\mathfrak{p}}=0$ for all prime 
ideals $\mathfrak{p}$ of $A$ of codimension at most $t$.

Since $A$ is Noetherian, the codimensions of all prime ideals of $A$ are bounded above by
a fixed non-negative integer. Hence we obtain that there is a positive integer $n$ such that
$f^{n}(M)_{\mathfrak{p}}=0$ for all prime ideals $\mathfrak{p}$ of $A$,
which implies $f^n(M) = 0$.
\end{proof}

\medbreak

To prove Proposition \ref{prop:firststep}, let $M$ be
a pseudocompact $B$-module that is finitely generated as an abstract 
$A$-module. By Corollary \ref{cor:step11}, there exist positive  integers
$N,N''$ that are bounded functions of the number of abstract
generators of $M$ such that $(w_2^N-1)^{N''}$ annihilates 
$k(\mathfrak{p})\hat{\otimes}_AM$ for all prime ideals  $\mathfrak{p}$ of $A$.
Letting $f$ be the endomorphism of $M$ defined by the action of 
$(w_2^N-1)^{N''}$ on $M$,
this is equivalent to condition $(\ref{eq:step12}$)
for all prime ideals $\mathfrak{p}$ of $M$.
Hence Lemma \ref{lem:step12} implies that $f$ is nilpotent, i.e.
there exists an integer $N'$ such that $(w_2^N-1)^{N'}$ annihilates $M$.
This proves Proposition \ref{prop:firststep}.


 \subsection{Proof of Proposition \ref{prop:secondprop}}
\label{ss:secondprop}

Let $J$, $\Lambda$, $M$ and $T$ be as in the statement of Proposition
\ref{prop:secondprop}. 
The main idea for proving this proposition is to use the Artin-Rees Lemma to construct
the almost complement $M'$ for $T$. This works directly if $\Delta$ is
abelian. If $\Delta$ is as in Lemma \ref{lem:efissocute}(iii) and $r=1$, we first use the
Artin-Rees Lemma 
for the case when the exponent $N'$  in the definition
of the ideal $J$ is equal to 1 and then use an inductive argument 
in the general case. Note that $J$ is a closed two-sided ideal of $B$ by Proposition
\ref{prop:zerostep}, so $\Lambda=B/J$ is a pseudocompact $A$-algebra.

\medbreak

Suppose first that $\Delta$ is as in Lemma \ref{lem:efissocute}(ii), i.e. $J=0$
and $\Lambda=B=A[[\Delta]]$ is commutative. 
For $1\le j\le s$, the action of $w_{1,j}$ on $T$ defines an automorphism in
$\mathrm{Aut}_A(T)\subset \mathrm{End}_A(T)$. Since $T$ is finitely generated
as an abstract $A$-module, it follows that the same is true for $\mathrm{End}_A(T)$.
Hence there exists a monic polynomial $F_j(x)\in A[x]$ such that 
$F_j(w_{1,j})$ annihilates $T$. Let $I$ be the ideal in the commutative
Noetherian ring $\Lambda$ that is abstractly
generated by $F_j(w_{1,j})$ for $1\le j\le s$. 
By the Artin-Rees Lemma, there is an 
integer $q >>0$ such that $T \cap (I^{q+1} \cdot M) =I\cdot (T \cap (I^q \cdot M))$.
However, $I$ annihilates $T$ by construction, so we
conclude that $T \cap (I^{q+1}\cdot M) = \{0\}$.  
Since $\Lambda$ is commutative and $I^{q+1}$ is abstractly finitely generated,
it follows that
$I^{q+1} \cdot M$ is
a pseudocompact $\Lambda$-submodule of $M$ by Remark \ref{rem:useful}(i).
The quotient $M/(I^{q+1} \cdot M)$ is finitely generated 
as an abstract module for the ring $\Lambda/  I^{q+1}$, and this ring is finitely
generated as an abstract $A$-module, since $I$ contains a monic polynomial in 
$w_{1,j}$ for each $1\le j\le s$ and $\Delta/\langle w_{1,1},\ldots,w_{1,s}\rangle$
is finite. Hence $M/(I^{q+1} \cdot M)$ is finitely generated as an abstract
$A$-module and Proposition \ref{prop:secondprop} is proved
if $\Delta$ is as in Lemma \ref{lem:efissocute}(ii).

\medbreak

Suppose now that $\Delta$ is as in Lemma \ref{lem:efissocute}(iii) and $r=1$. 
Write $w_2$ instead of $w_{2,1}$. Then $J=B\cdot (w_2^N-1)^{N'}$
for positive integers $N,N'$ and $\Lambda=B/J$.
Suppose first that $N'=1$. Then $J=B\cdot (w_2^{p^s}-1)$
by Lemma \ref{lem:represent}, and hence $\Lambda=B/J=A[[\overline{\Delta}]]$, where
$\overline{\Delta}$ is the quotient of $\Delta$ by the closed normal
subgroup that is topologically generated by 
$w_2^{p^s}$.   The conjugation action of $w_1$ on  the 
finite normal abelian subgroup
$\left(\langle w_2 \rangle\times \tilde{\Delta}_1\right) / \langle w_2^{p^s}\rangle$ 
of $\overline{\Delta}$ gives an automorphism of 
finite order. Thus $w_1^z$ is in the center of $\overline{\Delta}$, and of 
$\Lambda$, if $z \ge 1$ is sufficiently divisible.  
Similarly to the case when $\Delta$ is as in Lemma \ref{lem:efissocute}(ii),
one finds an ideal $I$ in $A[[\langle  w_1^z \rangle]]$ that is abstractly
generated by a monic polynomial in $w_1^z$ such that  $T \cap (I^{q+1}\cdot M) = 
\{0\}$ for an integer $q> > 0$. 
Since $M$ is a pseudocompact $\Lambda$-module and
$I^{q+1}$ is generated by a single element that 
lies in the center of $\Lambda$, it follows that
$I^{q+1} \cdot M$ is
a pseudocompact $\Lambda$-submodule of $M$ by Remark \ref{rem:useful}(i).
The quotient $M/(I^{q+1} \cdot M)$ is finitely
generated as an abstract module for the ring $\Lambda/ I^{q+1}$. This ring is finitely
generated as an abstract $A$-module, since $I$ contains a monic polynomial in 
$w_1$, since $w_2^{p^s}=1$ in $\Lambda$, and since $\tilde{\Delta}_0$ and 
$\tilde{\Delta}_1$ are finite. 
So $M/(I^{q+1} \cdot M)$ is finitely generated as an abstract
$A$-module and Proposition \ref{prop:secondprop} is proved 
if $N' = 1$.

We now suppose that $N'  \ge 1$ are arbitrary.  
In this case, we break the proof into several steps given by Lemma \ref{lem:fisit},
Corollary \ref{cor:neoth} and Lemma \ref{lem:itsfinite} below.
For simplicity, let 
$\epsilon = (w_2^{p^s} -  1)$ so that $J = B\cdot \epsilon^{N'}$.  
For $m  \ge 1$, define
\begin{equation}
\label{eq:mmm}
M(\epsilon^m) = \{\alpha \in M\;|\; \epsilon^m
 \cdot \alpha = 0\}.
 \end{equation}
Since by Proposition \ref{prop:zerostep},
$\Lambda\cdot \epsilon^m$ is a two-sided ideal of $\Lambda$,
it follows that $M(\epsilon^m)$ is a 
pseudocompact $\Lambda$-submodule of $M$.

\begin{lemma}
\label{lem:fisit}  
The module $M$ is a left Noetherian $\Lambda$-module.  
If $M(\epsilon)$ is not finitely generated as an abstract $A$-module, 
then there exists a non-zero pseudocompact $\Lambda$-submodule 
$Y$ of $M(\epsilon)$ such that $T \cap Y = 0$. 
\end{lemma}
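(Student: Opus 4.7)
The first assertion should be quick: by Lemma \ref{lem:noetherian}, $B = A[[\Delta]]$ is left Noetherian, hence so is the quotient ring $\Lambda = B/J$. Since $M$ is finitely generated as an abstract $\Lambda$-module by hypothesis, the standard fact that finitely generated modules over a left Noetherian ring are Noetherian gives the first claim.

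For the second assertion, my plan is to reduce to the case $N'=1$ of Proposition \ref{prop:secondprop}, which has already been established earlier in \S\ref{ss:secondprop}. The key observation is that $\epsilon$ annihilates $M(\epsilon)$ by definition, so $M(\epsilon)$ is naturally a pseudocompact module over the quotient $\Lambda/\Lambda\epsilon$, which is precisely the ring $A[[\overline{\Delta}]]$ that arose in the $N'=1$ argument. Furthermore, since the Noetherian property established above tells us that $M(\epsilon)$, being a $\Lambda$-submodule of $M$, is finitely generated as an abstract $\Lambda$-module, it is also finitely generated as an abstract $\Lambda/\Lambda\epsilon$-module.

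Next I would set $T' = T \cap M(\epsilon)$. Both $T$ and $M(\epsilon)$ are closed pseudocompact $\Lambda$-submodules of $M$, so $T'$ is again a pseudocompact $\Lambda$-submodule, and it is a $\Lambda/\Lambda\epsilon$-submodule since $\epsilon$ annihilates it. It is finitely generated as an abstract $A$-module because $T$ is, and $A$ is Noetherian. Applying the $N'=1$ case of Proposition \ref{prop:secondprop} with ambient module $M(\epsilon)$ and submodule $T'$ produces a pseudocompact $\Lambda/\Lambda\epsilon$-submodule $Y$ of $M(\epsilon)$ such that $Y \cap T' = \{0\}$ and $M(\epsilon)/Y$ is finitely generated as an abstract $A$-module.

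To finish, I would simply observe that $Y \cap T = Y \cap T \cap M(\epsilon) = Y \cap T' = \{0\}$ since $Y \subseteq M(\epsilon)$, and that $Y$ is automatically a pseudocompact $\Lambda$-submodule of $M(\epsilon)$. Non-vanishing of $Y$ is where the hypothesis is used: if $Y$ were zero, then $M(\epsilon) = M(\epsilon)/Y$ would be finitely generated as an abstract $A$-module, contradicting the assumption on $M(\epsilon)$. I do not expect any real obstacle here; the only thing to verify carefully is that the $N'=1$ case really applies to $M(\epsilon)$, which comes down to the finite generation over $\Lambda/\Lambda\epsilon$ supplied by the Noetherian assertion in the first paragraph.
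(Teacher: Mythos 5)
Your proposal is correct and follows essentially the same route as the paper: reduce to the already-established $N'=1$ case of Proposition \ref{prop:secondprop} applied to the pair $(M(\epsilon), T\cap M(\epsilon))$ viewed over $\Lambda/\Lambda\epsilon$, using the first assertion to guarantee finite generation of $M(\epsilon)$ over that quotient ring. You even spell out the finite-generation check over $\Lambda/\Lambda\epsilon$ a bit more explicitly than the paper does.
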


\begin{proof} 
Since $M$ is finitely generated as an abstract $\Lambda$-module, where 
$\Lambda=B/J$, and 
$B$ is left Noetherian, $M$ must be a left Noetherian $\Lambda$-module.
By $(\ref{eq:mmm})$, $M(\epsilon)$ is annihilated by 
$\epsilon$. Thus
$M(\epsilon)$ is a pseudocompact $\Lambda_1$-module, where
$$\Lambda_1 = \Lambda/\Lambda\epsilon = 
B/B\cdot (w_2^{p^s} - 1),$$ and 
$T_1 = T \cap M(\epsilon)$ is a pseudocompact 
$\Lambda_1$-submodule of $M(\epsilon)$.
Because $T$ is finitely generated as an abstract $A$-module,
$T_1$ is also finitely generated as an abstract $A$-module.  
By what we proved in the case when
$N'= 1$, we can therefore conclude that there is a 
pseudocompact $\Lambda$-submodule 
$Y$ of $M(\epsilon)$ such that $T \cap Y = T_1 \cap Y = \{0\}$ 
and $M(\epsilon)/Y$ is finitely generated as an abstract $A$-module.  
If $M(\epsilon)$ is not finitely generated as an abstract $A$-module,
this forces $Y$ to be non-zero.
\end{proof}
 
\begin{cor} 
\label{cor:neoth}
There is a pseudocompact $\Lambda$-submodule $M'$ of $M$ such that 
$T\cap M' = 0$ and $(M/M')(\epsilon)$ is finitely generated as
an abstract $A$-module.  
\end{cor}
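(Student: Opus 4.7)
The plan is to use the Noetherian property of $M$ given by Lemma \ref{lem:fisit} together with a maximality argument to produce $M'$, and then appeal to Lemma \ref{lem:fisit} applied to the quotient $M/M'$ to derive a contradiction if $(M/M')(\epsilon)$ fails to be finitely generated over $A$.

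More concretely, first I would let $\mathcal{S}$ denote the collection of pseudocompact $\Lambda$-submodules $N$ of $M$ with $N\cap T = 0$. This is nonempty since $0\in\mathcal{S}$. Because $M$ is left Noetherian as a $\Lambda$-module by Lemma \ref{lem:fisit}, every ascending chain in $\mathcal{S}$ stabilizes, so Zorn's lemma furnishes a maximal element $M'\in\mathcal{S}$.

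Next I would verify the finiteness claim. Set $\overline{M} = M/M'$, write $\overline{T}$ for the image of $T$ in $\overline{M}$, and note that $\overline{M}$ is a pseudocompact $\Lambda$-module, $\overline{T}$ is a pseudocompact $\Lambda$-submodule, and $\overline{T}$ is finitely generated as an abstract $A$-module because $T$ is. The condition $M'\cap T = 0$ implies that $T\to \overline{T}$ is a bijection of $A$-modules. Suppose for contradiction that $(\overline{M})(\epsilon) = (M/M')(\epsilon)$ is not finitely generated as an abstract $A$-module. Then Lemma \ref{lem:fisit}, applied to $\overline{M}$ in place of $M$ and $\overline{T}$ in place of $T$, produces a nonzero pseudocompact $\Lambda$-submodule $\overline{Y}\subseteq \overline{M}(\epsilon)$ with $\overline{T}\cap \overline{Y} = 0$.

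Finally I would pull back: let $Y\subseteq M$ be the preimage of $\overline{Y}$ under $M\to \overline{M}$, which is a pseudocompact $\Lambda$-submodule of $M$ strictly containing $M'$ since $\overline{Y}\neq 0$. If $t\in Y\cap T$, then its image in $\overline{M}$ lies in $\overline{T}\cap \overline{Y} = 0$, so $t\in M'\cap T = 0$; hence $Y\cap T = 0$ and $Y\in\mathcal{S}$, contradicting the maximality of $M'$. Therefore $(M/M')(\epsilon)$ is finitely generated as an abstract $A$-module, completing the proof. The main (mild) obstacle is simply checking that taking preimages and images preserves the pseudocompact $\Lambda$-module structure and that $\overline{T}$ satisfies the hypotheses needed to feed it back into Lemma \ref{lem:fisit}; this is routine given the general setup in Remark \ref{rem:useful}.
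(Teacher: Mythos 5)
Your proof is correct and follows essentially the same strategy as the paper: both arguments iteratively apply Lemma \ref{lem:fisit} to quotients $M/M_n$ to enlarge the submodule, and both invoke the Noetherian property of $M$ to guarantee termination. The only cosmetic difference is that you package the iteration as a maximality argument in a family $\mathcal{S}$ rather than as an explicit increasing chain.
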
 
 
\begin{proof} 
Suppose we have constructed for some integer
$n \ge 0$ a strictly increasing sequence of pseudocompact $\Lambda$-submodules 
$M_0 \subset M_1 \subset \cdots \subset M_n$ of $M$ such
that $M_0 = \{0\}$ and $T \cap M_n = \{0\}$.   If $(M/M_n)(\epsilon)$ 
is finitely generated as an abstract $A$-module, 
then we let $M' = M_n$ and we are done.  Otherwise, 
observe that $T$ injects into $M/M_n$.  We can apply Lemma \ref{lem:fisit} to this 
inclusion and to the module $M/M_n$ to conclude that there  a non-zero 
pseudocompact $\Lambda$-submodule 
$Y$ of $(M/M_n)(\epsilon)$ such that $T \cap Y = 0$.  The inverse 
image of $Y$ in $M$ is a pseudocompact $\Lambda$-submodule 
$M_{n+1}$ which properly contains $M_n$ and for 
which $T \cap M_{n+1} = \{0\}$. Since $M$ is left Noetherian by Lemma \ref{lem:fisit}, 
the process stops at some $n$, 
meaning that $(M/M_n)(\epsilon)$ is finitely generated as
an abstract $A$-module, 
and we can let $M' = M_n$.  
 \end{proof}

\begin{lemma}
\label{lem:itsfinite}
If $M(\epsilon)$ is finitely generated as an abstract $A$-module, 
then $M$ is finitely generated as an abstract $A$-module. 
\end{lemma}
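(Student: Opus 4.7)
The plan is to induct on the exponent in the $\epsilon$-torsion filtration. Set $\epsilon = w_2^{p^s}-1$ as in the surrounding text, and recall from Proposition \ref{prop:zerostep} that $B\epsilon$ is a closed two-sided ideal, so each $M(\epsilon^i) = \{\alpha \in M : \epsilon^i\alpha = 0\}$ is a pseudocompact $\Lambda$-submodule of $M$. Because $M$ is a $\Lambda = B/B\epsilon^{N'}$-module, the top of the filtration satisfies $M = M(\epsilon^{N'})$, and the bottom is $M(\epsilon^0) = 0$. Thus it suffices to prove, by induction on $i$ in the range $0 \le i \le N'$, that $M(\epsilon^i)$ is finitely generated as an abstract $A$-module.

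The base case $i=0$ is trivial, and $i=1$ is the hypothesis of the lemma. For the inductive step, I would consider the $A$-linear map on pseudocompact modules given by multiplication by $\epsilon$,
\begin{equation*}
\mu_\epsilon : M(\epsilon^{i+1}) \longrightarrow M(\epsilon^i),
\end{equation*}
which is well defined because $\epsilon^i(\epsilon\alpha) = \epsilon^{i+1}\alpha = 0$ for $\alpha \in M(\epsilon^{i+1})$. Note that $\mu_\epsilon$ need not be $\Lambda$-linear because $\epsilon$ is not central in $B$, but the two-sidedness of $B\epsilon$ does ensure that its image $\epsilon\cdot M(\epsilon^{i+1})$ is a $\Lambda$-submodule of $M(\epsilon^i)$; in any case $A$-linearity is all we need. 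The kernel of $\mu_\epsilon$ is $M(\epsilon) \cap M(\epsilon^{i+1}) = M(\epsilon)$, yielding a short exact sequence of abstract $A$-modules
\begin{equation*}
0 \longrightarrow M(\epsilon) \longrightarrow M(\epsilon^{i+1}) \stackrel{\mu_\epsilon}{\longrightarrow} \epsilon\cdot M(\epsilon^{i+1}) \longrightarrow 0.
\end{equation*}

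Now the image $\epsilon\cdot M(\epsilon^{i+1})$ sits inside $M(\epsilon^i)$, which is finitely generated as an abstract $A$-module by the inductive hypothesis; since $A$ is Noetherian, submodules of finitely generated abstract $A$-modules are finitely generated, so the image is finitely generated over $A$. The kernel $M(\epsilon)$ is finitely generated over $A$ by the standing assumption of the lemma. Hence $M(\epsilon^{i+1})$ is sandwiched in an extension of two finitely generated abstract $A$-modules and is therefore itself finitely generated over $A$. Iterating up to $i = N'$ gives $M = M(\epsilon^{N'})$ finitely generated over $A$, as required. I don't expect any serious obstacle here: the only subtle point is remembering that $\mu_\epsilon$ is not $\Lambda$-linear (so one should not try to invoke Noetherianness of $\Lambda$ as a module), but it is $A$-linear, and that is exactly what is needed since $A$ itself is Noetherian.
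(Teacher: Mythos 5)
Your proof is correct and follows essentially the same route as the paper: induction on the power of $\epsilon$, using the multiplication-by-$\epsilon$ map $M(\epsilon^{i+1}) \to M(\epsilon^{i})$ with kernel $M(\epsilon)$, together with the Noetherianness of $A$, and then terminating at $i = N'$ since $\epsilon^{N'} = 0$ in $\Lambda$. The only cosmetic difference is that you make the image explicit to get a short exact sequence and you remark that the map is $A$-linear but not $\Lambda$-linear; the paper just uses the left-exact sequence $0 \to M(\epsilon) \to M(\epsilon^{m+1}) \to M(\epsilon^m)$ directly, which amounts to the same thing.
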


\begin{proof}
We show this by proving by increasing induction on 
$m$ that $M(\epsilon^m)$ is finitely generated as an abstract  $A$-module for all 
$m \ge 1$.  
When $m  = 1$, this statement holds by assumption.
Suppose now that it is true for some $m\ge 1$. We have an exact sequence
of  $A$-modules 
$$0 \to M(\epsilon) \to 
M(\epsilon^{m+1}) \to   
M(\epsilon^m)$$
in which the $A$-linear map $M(\epsilon^{m+1}) \to M(\epsilon^m)$ is multiplication 
by $\epsilon$.
Since $M(\epsilon)$ 
and $M(\epsilon^m)$
are finitely generated as abstract $A$-modules by induction, this proves that 
$M(\epsilon^{m+1})$ is
finitely generated as an abstract $A$-module.  
Since $\epsilon^{N'} = 0$ in $\Lambda$, 
we conclude that  $M(\epsilon^{N'}) = M$ is finitely 
generated as an abstract $A$-module.
 \end{proof}


\subsection{Proof of Proposition \ref{prop:bounded}}
\label{ss:bounded}

As in the statement of Proposition \ref{prop:bounded}, let
$\Omega$ be a pseudocompact left Noetherian ring and let 
$P^\bullet$ be a complex in $D^-(\Omega)$ whose terms
$P^i$ are free and finitely generated as abstract $\Omega$-modules 
such that $P^i = 0$ for $i > 0$.  For $i \le 0$, assume that $I_i$ is a closed two-sided 
ideal in $\Omega$ that annihilates $\HH^i(P^\bullet)$ such that
$J_i=I_i \cdot I_{i+1} \cdots I_1 \cdot I_0$ is free and finitely generated 
as an abstract left $\Omega$-module.
We need to prove that $P^\bullet$ is isomorphic in $D^-(\Omega)$ to a complex 
$Q^\bullet$ such that $Q^i = 0$ for $i > 0$ and $Q^i$ is annihilated by $J_i$ 
for $i \le 0$. We will prove this by constructing
$Q^\bullet$ inductively from right to left.

Let $j\le 0$ be an integer. Suppose by induction that $Q^{>j}$ is a complex
which is isomorphic to $P^\bullet$ in $D^-(\Omega)$ with the following properties.
The terms $Q^i$ are zero for $i>0$ and free and finitely generated as abstract
$\Omega$-modules for $i\le j$. Also, for $j+1\le i\le 0$, $Q^i$ is annihilated by 
$J_i$
and is finitely generated as an abstract 
$\Omega$-module. We can certainly construct such a complex $Q^{>j}$ when $j=0$ 
since then we can simply let $Q^{>0} = P^\bullet$.

\medbreak

\noindent\textit{Claim $1$}: 
The complex $Q^{>j}$ is isomorphic in $D^-(\Omega)$ to a complex
$Q_1^\bullet$ such that $Q_1^i=Q^i$ for $i>j$, $Q_1^j$ is 
annihilated by $J_j$ and $Q_1^i$ is finitely generated as an abstract $\Omega$-module
for $i\le j$.

\medbreak

\noindent
\textit{Proof of Claim $1$.}
The differential $\delta^j:Q^j \to Q^{j+1}$ of $Q^{>j}$ induces
an exact sequence of pseudocompact $\Omega$-modules
\begin{equation}
\label{eq:dohdoh}
\xymatrix @R.7pc{
0\ar[r]&\displaystyle\frac{\ZZ^j(Q^{>j})}{\BB^j(Q^{>j})}\ar[r]
\ar@{=}[d]& \displaystyle\frac{Q^j}{\BB^j(Q^{>j})}\ar[r]^{\overline{\delta^j}}& Q^{j+1}.\\
&\HH^j(Q^{>j})
}
\end{equation}
Since $\HH^j(Q^{>j}) = \HH^j(P^\bullet)$ has been assumed 
to be annihilated by $I_j$ and $Q^{j+1}$ is annihilated by 
$J_{j+1}=I_{j+1} I_{j+2} \cdots I_0$ by induction, (\ref{eq:dohdoh}) shows
that $Q^j/\BB^j(Q^{>j})$ is annihilated by $J_j =  I_j (I_{j+1} \cdots I_0) $.  
Hence $J_j\,Q^j$ lies in $\BB^j(Q^{>j})$ and
we obtain a short exact sequence of pseudocompact $\Omega$-modules
\begin{equation}
\label{eq:Zsplit}
0 \to \ZZ^{j-1}(Q^{>j}) \to (\delta^{j-1})^{-1}(J_j\,Q^j) \xrightarrow{\delta^{j-1}} 
J_j\,Q^j \to 0.
\end{equation}
Since, by assumption, $J_j$ is a two-sided ideal which is 
free and finitely generated as an abstract left $\Omega$-module
and since, by induction, $Q^j$ is free and finitely generated as an abstract
$\Omega$-module,
$J_j \,Q^j$ is also free and finitely generated as an 
abstract $\Omega$-module. 
By Remark \ref{rem:useful}(i),
$J_j\,Q^j$ is a pseudocompact
$\Omega$-submodule of $Q^j$.
By Remark \ref{rem:useful}(iii),  $J_j\,Q^j$ is a topologically free
pseudocompact $\Omega$-module. 
Thus there is a homomorphism $s:J_j\,Q^j \to (\delta^{j-1})^{-1}(J_j\,Q^j)$ of 
pseudocompact $\Omega$-modules such that $\delta^{j-1}\circ s$ is the identity
on $J_j\,Q^j$. In particular, $s(J_j\,Q^j)$ is a pseudocompact $\Omega$-submodule
of $(\delta^{j-1})^{-1}(J_j\,Q^j)$, and hence of $Q^{j-1}$, such that
\begin{equation}
\label{eq:notquitesure}
s(J_j\,Q^j) \cap \ZZ^{j-1}(Q^{>j}) =\{0\}.
\end{equation}
The restriction of the differential $\delta^{j-1}:Q^{j-1}\to Q^j$ to $s(J_j\,Q^j)$
is therefore injective. This implies that we have an exact sequence in $C^-(\Omega)$
$$0 \to Q_2^\bullet \to Q^{>j} \to Q_1^\bullet \to 0$$
in which $Q_2^\bullet$ is the two-term complex 
$s(J_j\,Q^j) \xrightarrow{\delta^{j-1}} J_j\,Q^j$ concentrated 
in degrees $j-1$ and $j$, and the morphism $Q^{>j} \to Q_1^\bullet$
results from the natural inclusions of these terms into $Q^{j-1}$ and $Q^j$,
respectively. Since $Q_2^\bullet$ is acyclic, $Q^{>j} \to Q_1^\bullet$ is
a quasi-isomorphism. The terms $Q_1^i$ are equal to $Q^i$ for $i>j$,
and if $i=j$ then $Q_1^j=Q^j/J_j\,Q^j$.
Moreover, since all terms of $Q^{>j}$ are finitely generated as abstract
$\Omega$-modules, the same is true for $Q_1^\bullet$.
This proves claim 1.

\medbreak

\noindent\textit{Claim $2$}: 
Let $Q_1^{\le (j-1)}$ be the truncation of $Q_1^\bullet$ which results by setting
to $0$ all terms in degrees $> j-1$. There is a quasi-isomorphism 
$\rho:L^{\le (j-1)}\to Q_1^{\le (j-1)}$ in $C^-(\Omega)$,
where $L^{\le (j-1)}$ is a bounded above complex of pseudocompact 
$\Omega$-modules that are free and finitely generated as abstract 
$\Omega$-modules and $L^i=0$ for $i>j$. Moreover, 
$\rho^{j-1}:L^{j-1}\to Q_1^{j-1}$
is surjective.

\medbreak

\noindent\textit{Proof of Claim $2$.}
This immediately follows from Remark \ref{rem:hartshorne}.

\medbreak

\noindent\textit{Claim $3$}: 
The complex $Q_1^\bullet$ from claim 1 is isomorphic in $D^-(\Omega)$ to a complex
$T^\bullet$ such that the terms $T^i$ are zero for $i>0$ and free and finitely 
generated as abstract $\Omega$-modules for $i \le j-1$. Also, for $j\le i\le 0$, $T^i$ is annihilated by $J_i$ and is finitely generated as an abstract 
$\Omega$-module.

\medbreak

\noindent\textit{Proof of Claim $3$.}
We use the complex $L^{\le (j-1)}$ and the quasi-isomorphism
$\rho:L^{\le (j-1)}\to Q_1^{\le (j-1)}$ from claim 2 to prove this.
Define $T^\bullet$ to be the complex with terms 
\begin{equation}
\label{eq:thatstheticket}
T^i=Q_1^i \quad \mbox{for $i\ge j$} \quad \mbox{and}\quad
T^i=L^i \quad \mbox{for $i\le j-1$}.
\end{equation}
Let the differentials $d_T^i$ be given by
\begin{equation}
\label{eq:thatstheticket2}
d_T^i=d_{Q_1}^i \quad \mbox{for $i\ge j$}, \quad
d_T^{j-1}=d_{Q_1}^{j-1}\circ\rho^{j-1} \quad \mbox{and}\quad
d_T^i=d_L^i \quad \mbox{for $i\le j-2$}.
\end{equation}
Define $\tau:T^\bullet\to Q_1^\bullet$ to be the map such that 
\begin{equation}
\label{eq:thatstheticket3}
\tau^i=\mbox{ identity on $Q_1^i$} \quad \mbox{for $i\ge j$} \quad \mbox{and}\quad
\tau^i=\rho^i \quad \mbox{for $i\le j-1$.}
\end{equation}
We claim that $\tau$  is a quasi-isomorphism in $C^-(\Omega)$.

It follows from the definition of $T^\bullet$ and $\tau$ in $(\ref{eq:thatstheticket})$,
$(\ref{eq:thatstheticket2})$ and $(\ref{eq:thatstheticket3})$  that
$\tau$ is a homomorphism in $C^-(\Omega)$.
Since $\tau^{j-1}=\rho^{j-1}$ is surjective by claim 2, 
it follows from $(\ref{eq:thatstheticket2})$
that 
\begin{equation}
\label{eq:into}
\BB^j(T^{\bullet}) = d_T^{j-1}(T^{j-1}) = d_{Q_1}^{j-1}(Q_1^{j-1}) = \BB^j(Q_1^\bullet).
\end{equation}
Thus the definition of $\tau:T^\bullet\to Q_1^\bullet$ in $(\ref{eq:thatstheticket3})$,
together with claim 2, 
show that $\tau$ induces an isomorphism 
$\HH^i(T^\bullet) \to \HH^i(Q_1^\bullet)$ for $i\le j-2$ and for $i\ge j$. So the only
issue is the case $i=j-1$. 
We have a commutative diagram with exact rows
\begin{equation}
\label{eq:TQ2}
\xymatrix @R.7pc{
& \HH^{j-1}(T^\bullet)\ar@{=}[d]& \HH^{j-1}(L^{\le (j-1)})\ar@{=}[d]&& {}\\
0\ar[r]& \displaystyle\frac{\ZZ^{j-1}(T^\bullet)}{\BB^{j-1}(T^\bullet)}
\ar[r]\ar[dd]_{\HH^{j-1}(\tau)}& \displaystyle\frac{T^{j-1}}{\BB^{j-1}(T^\bullet)} 
\ar[r]^(.55){\overline{d_T^{j-1}}} \ar[dd]^{\overline{\tau^{j-1}}}&B^j(T^\bullet)\ar[r]
\ar[dd]^{\tau^j}& 0\\
&&&&\\
0\ar[r]&\displaystyle\frac{\ZZ^{j-1}(Q_1^\bullet)}{\BB^{j-1}(Q_1^\bullet)}
\ar[r]\ar@{=}[d]& 
\displaystyle\frac{Q_1^{j-1}}{\BB^{j-1}(Q_1^\bullet)} 
\ar^(.55){\overline{d_{Q_1}^{j-1}}}[r]\ar@{=}[d]&B^j(Q_1^\bullet)\ar[r]& 0\\
&\HH^{j-1}(Q_1^\bullet) & \HH^{j-1}(Q_1^{\le (j-1)})&& {}
}
\end{equation}
The rightmost vertical homomorphism in $(\ref {eq:TQ2})$, 
which is induced by $\tau^j$, is an isomorphism by (\ref{eq:into}).
The middle vertical homomorphism in $(\ref {eq:TQ2})$, which is induced by
$\tau^{j-1}=\rho^{j-1}$, is equal to the isomorphism 
$\HH^{j-1}(\rho):\HH^{j-1}(L^{\le (j-1)})\to \HH^{j-1}(Q_1^{\le (j-1)})$.
So the left vertical homomorphism $\HH^{j-1}(\tau)$
in $(\ref {eq:TQ2})$ must be an isomorphism by the five lemma.
This proves claim 3.

\medbreak

It follows from claims 1 and 3 that we can let $Q^{>(j-1)}=T^\bullet$. Thus we proceed
by descending induction to construct a bounded above complex $Q^\bullet$
which is isomorphic to $P^\bullet$ in $D^-(\Omega)$ such that $Q^i = 0$ for $i > 0$ and $Q^i$ is annihilated by $J_i$ for $i \le 0$. This proves Proposition \ref{prop:bounded}.


\subsection{Proof of Proposition \ref{prop:secondstep}}
\label{ss:secondstep}

Let $M$ be a pseudocompact $B$-module that is finitely generated as an abstract
$A$-module, as in the statement of Proposition \ref{prop:secondstep}.
The key to proving this proposition is to use the Weierstrass preparation
theorem 
in a suitable power series algebra over $A$ to construct a pseudocompact
$B$-module $F$ that is free and finitely generated as an abstract $A$-module
together with a surjective homomorphism $F \to M$ of pseudocompact
$B$-modules.

\medbreak

Suppose first that $\Delta$ is as in Lemma \ref{lem:efissocute}(ii), i.e.
$\Delta = \mathbb{Z}_p^s \times Q\times Q'$. 
For $1\le j\le s$, the action of $w_{1,j}$ on $M$ defines an automorphism in
$\mathrm{Aut}_A(M)\subset \mathrm{End}_A(M)$. Since $M$ is finitely generated
as an abstract $A$-module,   the same is true for $\mathrm{End}_A(M)$.
Hence there exists a monic polynomial $g_j(x)\in A[x]$ such that 
$g_j(w_{1,j})$ annihilates $M$ for all $j$. Let $I$ be the  ideal in $B$ 
that is abstractly generated by $g_j(w_{1,j})$ for $1\le j\le s$. 
Then $I$ is 
a closed ideal of $B$ by Remark \ref{rem:useful}(i).
For $1\le j\le s$, let $x_{1,j}=w_{1,j}-1$, so that
$A[[\langle w_{1,1},\ldots,w_{1,s}\rangle]]\cong A[[x_{1,1},\ldots,x_{1,s}]]$. 
We can rewrite the polynomials $g_j(w_{1,j})$ as monic polynomials 
$f_j(x_{1,j})$ in  $x_{1,j}$ with coefficients in $A$.  By the Weierstrass
preparation theorem, one has $f_j(x_{1,j}) = h_j(x_{1,j})\cdot u_j(x_{1,j})$,
where $h_j(x)$ is a monic polynomial in $A[x]$ whose non-leading coefficients lie
in the maximal ideal of $A$ and $u_j(x)$ is a unit power series in $A[[x]]$.  
Since $x_{1,j}$ lies in every maximal ideal of $B$ and $u_j(x_{1,j})$ has invertible image 
in $B/B\cdot x_{1,j}$,
it follows that 
$u_j(x_{1,j})$ is a unit in $B$. Hence
$h_j(x_{1,j})$ annihilates $M$ for all $j$.   
Let $B_{f_1,\ldots,f_s}=A[[x_{1,1},\ldots,x_{1,s}]]/I_{h_1,\ldots,h_s}$, where
$I_{h_1,\ldots,h_s}$ is the ideal in $A[[x_{1,1},\ldots,x_{1,s}]]$ generated by
$h_{1,j}(x_{1,j})$ for $1\le j\le s$.  Then $B_{f_1,\ldots,f_s}$ is free and
finitely generated as an abstract $A$-module.  
The ring  $D=B/I$ is isomorphic to the group ring 
$B_{f_1,\ldots,f_s}[Q\times Q']$, which implies that $D$ is free and
finitely generated as an abstract $A$-module.  Since, as noted above, $I$ is a
closed ideal in $B$ and $D=B/I$, it follows that $D$ is a pseudocompact
$A$-algebra that is a pseudocompact $B$-module. Because $M$ is
a pseudocompact $D$-module that is finitely generated as an abstract $A$-module,
there is a surjective homomorphism
$\bigoplus_{i=1}^z D \to M$ of pseudocompact $D$-modules for some finite
number $z$. Since this homomorphism is also a homomorphism of pseudocompact
$B$-modules, this proves Proposition 
\ref{prop:secondstep} if $\Delta$ is as in Lemma \ref{lem:efissocute}(ii).

\medbreak

Suppose now that $\Delta$ is as in Lemma \ref{lem:efissocute}(iii) and $r=1$.
Write $w_2$ instead of $w_{2,1}$. 
Let $\tilde{\Delta}$ be the subgroup of $\Delta$ that is topologically generated
by $w_1=\overline{\Phi}^d$ and by $w_2$. Then $\tilde{\Delta}$
has finite index $d\,|\tilde{\Delta}_1|$ in $\Delta$. Let $\tilde{B}=A[[\tilde{\Delta}]]$.
Suppose we prove that there is a pseudocompact $\tilde{B}$-module
$\tilde{F}$ that is free and finitely generated as an abstract $A$-module and
a surjective homomorphism $\tilde{\varphi}:\tilde{F}\to M$ of pseudocompact
$\tilde{B}$-modules. Then the induced module
$F=\mathrm{Ind}_{\tilde{\Delta}}^\Delta (\tilde{F})$ is a pseudocompact
$B$-module that is free and finitely generated as an abstract $A$-module and
$\tilde{\varphi}$ induces a surjective homomorphism
$\varphi:F\to M$ of pseudocompact $B$-modules. 
Hence we are reduced to proving Proposition \ref{prop:secondstep} for 
$\tilde{\Delta}$.

By Proposition \ref{prop:firststep} and Lemma \ref{lem:represent}, 
there exist integers $s\ge 0$ and $N'\ge 1$ such that 
$(w_2^{p^s}-1)^{N'}\cdot M=\{0\}$.
By $(\ref{eq:commute})$, the left ideal $\tilde{J}=\tilde{B}
\cdot (w_2^{p^s}-1)^{N'}$ is a two-sided ideal in $\tilde{B}$. 
Moreover, it is closed in $\tilde{B}$ by Remark \ref{rem:useful}(i). 
Let $x_1=w_1-1$, so that $A[[x_1]]\cong A[[\langle w_1\rangle]]$, and define
$$A_{\tilde{J}}=A[[\langle w_2\rangle]]/
\left( (w_2^{p^s}-1)^{N'}\right).$$
Since  $(w_2^{p^s}-1)^{N'}$ is a monic polynomial in $(w_2-1)$
whose non-leading coefficients lie in the maximal ideal of $A$,
$A_{\tilde{J}}$ is free and finitely generated as an abstract $A$-module.
Every element in 
$\tilde{D}=\tilde{B}/\tilde{J}$ can be written in a unique way as a convergent 
power series
\begin{equation}
\label{eq:theformhere}
\sum_{i=0}^{\infty} \,  a_i\, x_1^i,
\quad \mbox{where each $a_i$ lies in $A_{\tilde{J}}$.}
\end{equation}
Moreover, any choice of $a_i\in A_{\tilde{J}}$, for all $i\ge 0$,
defines an element in $\tilde{D}$.

Using the Weierstrass preparation theorem in $A[[x_1]]\cong A[[\langle w_1\rangle]]$
and arguing similarly to the case when $\Delta$ is as in Lemma \ref{lem:efissocute}(ii),
it follows that there exists a monic polynomial
$$f_1(x) = x^n+b_{n-1}x^{n-1}+\cdots + b_0 \in A[x]$$
whose non-leading coeffcients are in the maximal ideal $m_A$ of $A$ such that 
$f_1(x_1)$ annihilates $M$.  

Let $\tilde{D}\cdot f_1(x_1)$ be the left ideal in $\tilde{D}$ that is generated
by $f_1(x_1)$. Consider the natural surjective $A$-module homomorphism
$$\beta: \quad\bigoplus_{i=0}^{n-1} A_{\tilde{J}}\;x_1^i
\longrightarrow \tilde{D}/(\tilde{D}\cdot f_1(x_1))$$
which sends $\sum_{i=0}^{n-1} \,  a_i\,x_1^i$ to the corresponding residue class of 
$\sum_{i=0}^{n-1} \,  a_i\,x_1^i$ modulo 
$\tilde{D}\cdot f_1(x_1)$.
We claim that $\beta$ is injective. Suppose there exists an element
$t=\sum_{i=0}^\infty \, a_i\, x_1^i$ in $\tilde{D}$ such that
\begin{equation}
\label{eq:step21}
t\cdot f_1(x_1)= (a_0+a_1x_1+\cdots + a_ix_1^i+\cdots) \cdot 
(x_1^n+b_{n-1}x_1^{n-1}+\cdots + b_0)
\end{equation}
lies in $\bigoplus_{i=0}^{n-1} A_{\tilde{J}} \,x_1^i$. Since the 
$a_i$ lie in $A_{\tilde{J}}$ and the $b_j$ lie in 
$m_A\subset A$, the $b_j$ commute with the $a_i$. 
Since all the $b_j$ lie in $m_A$, we see that all the $a_i$ lie in 
$m_A\cdot A_{\tilde{J}}$. Iterating this process, it follows, using induction,
that all the $a_i$ lie in $(m_A)^c\cdot A_{\tilde{J}}$ 
for all $c\ge 1$. This means that all the $a_i$ have to be zero. Thus $\beta$
is injective, which implies that
$\tilde{D}/(\tilde{D}\cdot f_1(x_1))\cong \bigoplus_{i=0}^{n-1} 
A_{\tilde{J}}\,x_1^i$
as abstract $A$-modules. Since we have already noted that 
$A_{\tilde{J}}$ is free and finitely generated as an abstract $A$-module, 
it follows that $\tilde{D}/ (\tilde{D}\cdot f_1(x_1))$ is free and finitely generated 
as an abstract $A$-module. 
By Remark \ref{rem:useful}(i), it follows that 
$\tilde{D}/(\tilde{D}\cdot f_1(x_1))$ is a pseudocompact $\tilde{D}$-module,
and hence, since $\tilde{D}=\tilde{B}/\tilde{J}$, also a pseudocompact
$\tilde{B}$-module. Because $M$ is a pseudocompact $\tilde{D}$-module that is 
finitely generated as an abstract $A$-module,
there is a surjective homomorphism
$\bigoplus_{i=1}^z \tilde{D} \to M$ of pseudocompact $\tilde{D}$-modules for some finite
number $z$. Since this homomorphism is also a homomorphism of pseudocompact
$\tilde{B}$-modules, this proves Proposition 
\ref{prop:secondstep} if $\Delta$ is as in Lemma \ref{lem:efissocute}(iii) and $r=1$.


\subsection{The case $r>1$ for $\Delta$ as in Lemma \ref{lem:efissocute}(iii)}
\label{ss:generalR} 

In this section, we complete the proof of Theorem \ref{thm:mainthm} by
considering the case when $\Delta$ is as in Lemma \ref{lem:efissocute}(iii)
and $r>1$. As before, let $B=A[[\Delta]]$.
We make the following adjustments to Propositions \ref{prop:zerostep} -
\ref{prop:secondstep}.

In Proposition \ref{prop:zerostep}, we  consider ideals of the form
$J_j=B\cdot (w_{2,j}^{N_j}-1)^{N'_j}$ for positive integers $N_j,N'_j$ for
all $1\le j\le r$ and define $J_{(j)}=J_1+\cdots + J_j$. 
As in Lemma \ref{lem:represent}, it follows that
$J_j=B \cdot (w_{2,j}^{p^{s_j}} - 1)^{N'_j}$, where $p^{s_j}$ is the maximal power of
$p$ dividing $N_j$. Using
$(\ref{eq:commute})$ and Remark \ref{rem:useful}(i), we see that $J_{(j)}$ is a 
closed two-sided ideal in $B$. Hence the quotient ring 
$\overline{B}_j=B/J_{(j)}$ is a pseudocompact $A$-algebra. 
Letting $\overline{J}_j=\overline{B}_{j-1}\cdot (w_{2,j}^{N_j}-1)^{N'_j}$ in 
$\overline{B}_{j-1}$, where we set $\overline{B}_0=B$, we similarly see that
$\overline{J}_j=\overline{B}_{j-1}\cdot(w_{2,j}^{p^{s_j}} - 1)^{N'_j}$ is a closed two-sided ideal in $\overline{B}_{j-1}$.
The last statement to be shown in generalizing Proposition \ref{prop:zerostep} is that 
$\overline{J}_j$ is a topologically free
rank one left $\overline{B}_{j-1}$-module and a topologically free rank one 
right $\overline{B}_{j-1}$-module.  

To show this last statement, we use a suitable cofinal system of closed normal finite index 
subgroups of $\Delta$ to prove the following.
Every element of $\overline{B}_{j-1}$ can be written in a 
unique way as a convergent power series
\begin{equation}
\label{eq:theform1}
\sum \;z_{u,a,\xi,b_j,c_j,\ldots,c_r} \,\sigma^u (w_1 - 1)^a \,\xi
\left(\prod_{i=j+1}^r(w_{2,i}-1)^{c_{i}}\right)
w_{2,j}^{b_j} (w_{2,j}^{p^{s_j}} - 1)^{c_j}
\end{equation}
\begin{equation}
\label{eq:theformalt1}
\left(\mathrm{resp. }\quad\sum\; 
\omega_{u,a,\xi,b_j,c_j,\ldots,c_r}\,(w_{2,j}^{p^{s_j}} - 1)^{c_j} w_{2,j}^{b_j}
\left(\prod_{i=j+1}^r(w_{2,i}-1)^{c_i}\right)\xi\, (w_1 - 1)^a \sigma^u\right)
\end{equation}
in which the sum ranges over all tuples $(u,a,\xi,b_j,c_j,\ldots,c_r)$
with $0\le u\le d-1$, $a\ge 0$, $\xi\in\tilde{\Delta}_1$, $0\le b_j\le p^{s_j} - 1$
and $c_j,\ldots,c_r \ge 0$, and each $z_{u,a,\xi,b_j,c_{j},\ldots,c_r}$ 
(resp. $\omega_{u,a,\xi,b_j,c_j,\ldots,c_r}$) lies in 
$$A_{(j-1)}=A[[\langle w_{2,1},\ldots, w_{2,j-1}\rangle]]/
\left( (w_{2,1}^{p^{s_1}}-1)^{N'_1},\ldots,(w_{2,j-1}^{p^{s_{j-1}}}-1)^{N'_{j-1}}\right).$$
Moreover, any choice of $z_{u,a,\xi,b_j,c_{j},\ldots,c_r}$ 
(resp. $\omega_{u,a,\xi,b_j,c_j,\ldots,c_r}$) in $A_{(j-1)}$ 
defines an element in $\overline{B}_{j-1}$.

In Proposition \ref{prop:firststep}, let $M$ be a
pseudocompact $B$-module that is finitely generated as an abstract $A$-module.
Using the same arguments as in the case when $r=1$, 
it follows that
for each $1\le j\le r$, there exist positive integers $N_j,N'_j$ such that 
$(w_{2,j}^{N_j}-1)^{N'_j}\cdot M=\{0\}$.  

In Proposition \ref{prop:secondprop}, we replace in part (ii) the ideal $J$ by
an ideal of the form $J=J_1+\cdots+J_r$,  where for $1\le j\le r$, 
$J_j=B \cdot(w_{2,j}^{N_j } - 1)^{N'_j}$ for certain integers $N_j,N'_j\ge 1$.
Then, as before, $J_j=B\cdot (w_{2,j}^{p^{s_j} } - 1)^{N'_j}$, where $p^{s_j}$ is the 
maximal power of $p$ dividing $N_j$, and $J$ is a closed two-sided ideal in $B$.
Suppose $M$ is a pseudocompact module for $\Lambda=B/J$ that is finitely
generated as an abstract $\Lambda$-module and
$T$ is a pseudocompact $\Lambda$-submodule of $M$
that is finitely generated as an abstract $A$-module. We need
to prove the existence of a pseudocompact $\Lambda$-submodule $M'$ of 
$M$ such that $M' \cap T = \{0\}$ and $M/M'$ is finitely generated as an 
abstract $A$-module. 

To prove this statement,
we proceed as for $r=1$ and first consider the case when $N_j'=1$ for
all $1\le j\le r$. In this case, $\Lambda=B/J=A[[\overline{\Delta}]]$, where
$\overline{\Delta}$ is the quotient of $\Delta$ by the closed normal
subgroup that is topologically generated by 
$w_{2,j}^{p^{s_j}}$ for $1\le j\le r$.  Using similar
arguments as in the case when $r=1$, we find a
pseudocompact $\Lambda$-submodule $M'$ of $M$ having the desired properties
if $N_j'=1$ for all $1\le j\le r$. 
For arbitrary $N_j'$, we replace $M(\epsilon^m)$ in $(\ref{eq:mmm})$ by
\begin{equation}
\label{eq:mmm1}
M(\epsilon_1^{m_1},\ldots,\epsilon_r^{m_r}) = \{\alpha \in M\;|\; \epsilon_j^{m_j}
 \cdot \alpha = 0\quad\mbox{for $1\le j\le r$}\}
 \end{equation}
for $m_1,\ldots,m_r \ge 1$, and prove analogous statements to the ones
in Lemma \ref{lem:fisit}, Corollary \ref{cor:neoth} and Lemma \ref{lem:itsfinite} 
to find $M'$.

Proposition \ref{prop:bounded} stays the same as before.
To prove Proposition \ref{prop:secondstep} for $r>1$, we let 
$\tilde{\Delta}$ be the subgroup of $\Delta$ that is topologically generated
by $w_1$ and by $w_{2,j}$ for $1\le j\le r$. Then $\tilde{\Delta}$
has finite index $d\,|\tilde{\Delta}_1|$ in $\Delta$. One argues
as in the case when $r=1$, that it is enough to prove Proposition \ref{prop:secondstep}
for $\tilde{B}=A[[\tilde{\Delta}]]$. 
By Proposition \ref{prop:firststep} and Lemma \ref{lem:represent}, 
there exist integers $s_j\ge 0$ and $N'_j\ge 1$ such that 
$(w_{2,j}^{p^{s_j}}-1)^{N'_j}\cdot M=\{0\}$ for $1\le j\le r$.
Using $(\ref{eq:commute})$ and Remark \ref{rem:useful}(i), it follows
that $\tilde{J}=\tilde{J}_1+\cdots +\tilde{J}_r$ is a closed two-sided ideal in $\tilde{B}$,
where $\tilde{J}_j=\tilde{B}\cdot (w_{2,j}^{p^{s_j}}-1)^{N'_j}$.
Define
$$A_{\tilde{J}}=A[[\langle w_{2,1},\ldots, w_{2,r}\rangle]]/
\left( (w_{2,1}^{p^{s_1}}-1)^{N'_1},\ldots,(w_{2,r}^{p^{s_r}}-1)^{N'_r}\right).$$
Then $A_{\tilde{J}}$ is free and finitely generated as an abstract $A$-module,
and every element in $\tilde{D}=\tilde{B}/\tilde{J}$ can be written in a 
unique way as a convergent power series as in $(\ref{eq:theformhere})$.
We can now proceed using the same arguments as in the case when $r=1$
to complete the proof  for the case when $r>1$.

\medbreak

The proof of Theorem \ref{thm:mainthm} in the case when $G$ is replaced by 
$\Delta$ as in Lemma \ref{lem:efissocute}(iii) and $r>1$ follows the same three 
steps as in the case when $r=1$. 

In the proof of step $1$, we need to use an inductive argument as follows.
As in the case when $r=1$, suppose $P^\bullet$ has properties (i), (ii) and (iii) 
of Theorem \ref{thm:derivedresult} and suppose that $n_2=0$,
so that $P^i = 0$ if $i > 0$.  
Since $\HH^i(P^\bullet) = 0$ if $i < n_1$, $P^\bullet$ is isomorphic in $D^-(B)$ to
the complex $P_0^\bullet$ which is obtained from $P^\bullet$ by replacing $P^{n_1}$ 
by $P^{n_1}/\BB^{n_1}(P^\bullet)$ and $P^i$ by $0$ for $i < n_1$. 
Define $J_{(0)}=\{0\}$ and $\overline{B}_0=B/J_{(0)}$. Assume by
induction that for $1\le j\le r$, $P^\bullet$ is isomorphic in $D^-(B)$ to a complex
$P_{j-1}^\bullet$ such that $P_{j-1}^i = 0$ if $i > 0$ or $i<n_1$ and such that
if $n_1\le i\le 0$ then $P_{j-1}^i$ is annihilated by a closed two-sided
ideal $J_{(j-1)}=J_1+\cdots +J_{j-1}$, where for $1\le t\le j-1$, 
$J_t=B \cdot(w_{2,t}^{N_t } - 1)^{N'_t}$ for certain integers $N_t,N'_t\ge 1$. 
Let $\overline{B}_{j-1}=B/J_{(j-1)}$ and
view $P_{j-1}^\bullet$ as a complex in $D^-(\overline{B}_{j-1})$.
Using the above adjustments of  Propositions \ref{prop:zerostep} - \ref{prop:bounded}
and Remark \ref{rem:hartshorne}, we find a complex $P_j^\bullet$
which is isomorphic to $P_{j-1}^\bullet$ in $D^-(\overline{B}_{j-1})$ 
such that $P_j^i=0$ if $i>0$ or $i<n_1$ and such that if $0\le i\le n_1$
then $P_j^i$ is annihilated by a closed two-sided ideal 
$\overline{J}_j=\overline{B}_{j-1}\cdot(w_{2,j}^{N_j} - 1)^{N'_j}$ for certain integers
$N_j,N_j'\ge 1$.
Note that if $J_j=B\cdot (w_{2,j}^{N_j} - 1)^{N'_j}$ and $J_{(j)}=J_{(j-1)}+J_j$, then 
$\overline{B}_j=B/J_{(j)}=\overline{B}_{j-1}/\overline{J}_j$ as pseudocompact rings. 
Since $P_j^\bullet$ can be viewed as a complex in $D^-(B)$ by inflation, it follows that
$P_j^\bullet$ is isomorphic to $P_{j-1}^\bullet$, and thus to $P^\bullet$, in $D^-(B)$.
Hence step $1$ follows by induction.

Steps $2$ and $3$ of the proof of Theorem \ref{thm:mainthm} are proved in the same 
way as when $r=1$, using the above adjustments of Propositions \ref{prop:secondprop} 
and \ref{prop:secondstep}.


\section{An example}
\label{s:ex}
\setcounter{equation}{0}

In this section, we want to revisit an example that was considered in \cite{bcderived} concerning the
deformations of group cohomology elements. Let $\ell >2$ be a rational prime with $\ell\equiv 3\mod 4$ and
let $G=\mathrm{Gal}(\overline{\mathbb{Q}}_\ell/\mathbb{Q}_\ell)$. Let $k=\mathbb{Z}/2$ and
$W=\mathbb{Z}_2$, and let $M=k$ have trivial $G$-action. Because of the Kummer sequence
$$1\to\{\pm 1\}\to \overline{\mathbb{Q}}_\ell^*\xrightarrow{\cdot 2}\overline{\mathbb{Q}}_\ell^*\to 1$$
we obtain that $\HH^2(G,M)=\mathbb{Z}/2$ has exactly one non-trivial element $\beta$. Moreover,
it was shown in \cite{bcderived} that the mapping cone $C(\beta)^\bullet$ is isomorphic to
$V^\bullet[1]$ for a two-term complex $V^\bullet$ that is concentrated in 
degrees $-1$ and $0$
\begin{equation}
\label{eq:v}
V^\bullet:\qquad \cdots 0 \to k[G_b]\xrightarrow{d} k[G_a]\to 0\cdots,
\end{equation}
where $a=\ell$, $b$ is an element of $\mathbb{Z}_\ell^*$ that is not a square mod $\ell$,
$G_a=\mathrm{Gal}(\mathbb{Q}_\ell(\sqrt{a})/\mathbb{Q}_\ell)$, 
$G_b=\mathrm{Gal}(\mathbb{Q}_\ell(\sqrt{b})/\mathbb{Q}_\ell)$ and
$d$ is the augmentation map of $k[G_b]$ composed with 
multiplication by $1+\sigma_a$ when $G_a=\{1,\sigma_a\}$.
It was also shown in \cite{bcderived} that the tangent space 
$\mathrm{Ext}^1_{D^-(k[[G]])}(V^\bullet,V^\bullet)$ is 4-dimensional over $k$, and that
the versal proflat deformation ring of $V^\bullet$ is universal and isomorphic to
$R^{\mathrm{fl}}(G,V^\bullet)\cong W[[G^{\mathrm{ab},2}]]\hat{\otimes}_W
W[[G^{\mathrm{ab},2}]]$,
where $G^{\mathrm{ab},2}$ denotes the abelianized $2$-completion of
$G$. Note that the universal proflat deformation ring $R^{\mathrm{fl}}(G,V^\bullet)$ is universal
with respect to isomorphism classes of quasi-lifts of $V^\bullet$ over objects $R$ in 
$\hat{\mathcal{C}}$ whose cohomology groups are topologically flat, and hence
topologically free, pseudocompact $R$-modules.

We now turn to the situation when $G$ is replaced by its maximal abelian quotient 
$G^{\mathrm{ab}}$. We want to compute the versal deformation rings of several complexes
related to the above $V^\bullet$. The complexes we will consider are all inflated from 
the maximal pro-$2$ quotient $G^{\mathrm{ab},2}$ of $G^{\mathrm{ab}}$. 
By Proposition \ref{prop:prop}, it will suffice to determine their versal deformation rings
as complexes for  $\Gamma=G^{\mathrm{ab},2}$.

Since  $\ell\equiv 3\mod 4$, local class field theory shows that there are topological generators $w_1$ and
$w_2$ for $\Gamma=\mathrm{Gal}(\mathbb{Q}_\ell^{\mathrm{ab},2}/\mathbb{Q}_\ell)$ 
with the 
following properties. The element $w_2$ has order $2$ and $\{\mathrm{id},w_2\}=
\mathrm{Gal}(\mathbb{Q}_\ell^{\mathrm{ab},2}/\mathbb{Q}_\ell^{\mathrm{un},2})$ where 
$\mathbb{Q}_\ell^{\mathrm{un},2}$ is the maximal unramified pro-$2$ extension of $\mathbb{Q}_\ell$.
The element $w_1$ is a topological generator of $\mathrm{Gal}(\mathbb{Q}_\ell^{\mathrm{ab},2}/
\mathbb{Q}_\ell(\sqrt{\ell}))\cong \mathbb{Z}_2$, and $\Gamma=\langle w_1,w_2\rangle$ is
isomorphic to $\mathbb{Z}_2\times \mathbb{Z}/2$. 
Note that $w_1$ (resp. $w_2$, resp. $w_1w_2$) acts trivially on the quadratic extension
$\mathbb{Q}_\ell(\sqrt{\ell})$ (resp. $\mathbb{Q}_\ell(\sqrt{-1})$, resp. $\mathrm{Q}_\ell(\sqrt{-\ell})$).

As before, let $M=k=\mathbb{Z}/2$ have trivial $\Gamma$-action.
Since $\langle w_1\rangle=\mathbb{Z}_2$ has cohomological dimension $1$, the spectral 
sequence
$$\HH^p(\langle w_1\rangle,\HH^q(\langle w_2\rangle,M))\Longrightarrow\HH^{p+q}(\Gamma,M)$$
degenerates and we get a short exact sequence for all $s\ge 1$
$$0\to \HH^1(\langle w_1\rangle,\HH^{s-1}(\langle w_2\rangle,M))
\to \HH^s(\Gamma,M)\to \HH^s(\langle w_2\rangle,M)^{\langle w_1\rangle}\to 0.$$
Since $\HH^s(\langle w_2\rangle,M)=k$ for all $s\ge 0$ and $\HH^1(\langle w_1\rangle,k)=k$,
we obtain that
$$\HH^0(\Gamma,M)=k\quad \mbox{and}\quad \HH^s(\Gamma,M)=k\oplus k\; \mbox{for $s\ge 1$.}$$
This means that there are three non-trivial elements in $\HH^2(\Gamma,M)$. 
Let $x\in\{\ell,-1,-\ell\}$ and consider the element
$h_x$ in $\HH^1(\Gamma,\{\pm 1\})=\mathrm{Hom}(\Gamma,\{\pm 1\})$ which
corresponds to the augmentation sequence
\begin{equation}
\label{eq:gx}
0\to  k \to k[G_x]\to k \to 0
\end{equation}
where $G_x=\mathrm{Gal}(\mathbb{Q}_\ell(\sqrt{x})/\mathbb{Q}_\ell)$.
Inflating the cup product $h_a\cup h_b$ for $a,b\in\{\ell,-1,-\ell\}$ to an element in
$\HH^2(G,\{\pm 1\})$, it follows that $h_a\cup h_b$
corresponds to the Hilbert symbol $(a,b)\in \HH^2(G,\{\pm 1\})$. Hence 
$h_\ell\cup h_\ell$ and $h_\ell\cup h_{-1}$ define non-trivial elements in $\HH^2(G,\{\pm 1\})$,
whereas $h_\ell\cup h_{-\ell}$ defines a trivial element in $\HH^2(G,\{\pm 1\})$. Since
the restriction of $h_\ell\cup h_\ell$ to $\langle w_2\rangle$ is non-trivial, whereas the restriction 
of $h_\ell\cup h_{-1}$ to $\langle w_2\rangle$ is trivial, 
$h_\ell\cup h_\ell \neq h_\ell\cup h_{-1}$ in $\HH^2(\Gamma,k)$. 
It follows that
$h_\ell\cup h_\ell$, $h_\ell\cup h_{-1}$ and $h_\ell\cup h_{-\ell}$
are representatives of the three non-trivial elements in  $\HH^2(\Gamma,M)$.
We obtain three non-split two-term complexes $V_{y}^\bullet$ in $D^-(k[[\Gamma]])$
that are concentrated in degrees $-1$ and $0$
\begin{equation}
\label{eq:v3}
V_{y}^\bullet:\qquad \cdots 0 \to k[G_y]\xrightarrow{d} k[G_\ell]\to 0\cdots
\end{equation}
where $y\in\{\ell,-1,-\ell\}$ and $d$ is the augmentation map followed by 
multiplication with the trace element of $G_\ell$. In particular, for $y\in\{\ell,-1\}$, the inflation
of $V_{y}^\bullet$ to $G$ is isomorphic to $V^\bullet$.

\begin{lemma}
\label{lem:Ext1Vy}
For $y\in \{\ell,-\ell\}$ $($resp. $y=-1$$)$, the $k$-dimension of $\mathrm{Ext}^1_{D^-(k[[\Gamma]])}(
V_{y}^\bullet,V_{y}^\bullet)$ is at least $3$ $($resp. at least $4$$)$.
Moreover, the proflat tangent space $t_{F^{\mathrm{fl}}}$ is isomorphic to the tangent space
$t_F$.
\end{lemma}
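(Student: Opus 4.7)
The plan is to exploit the canonical distinguished triangle
\[
k[1]\longrightarrow V_y^\bullet\longrightarrow k\xrightarrow{\ c_y\ }k[2]
\]
in $D^-(k[[\Gamma]])$ which records the two trivial cohomology groups $\HH^{-1}(V_y^\bullet)=\HH^0(V_y^\bullet)=k$; the connecting morphism is the class $c_y=h_\ell\cup h_y\in\HH^2(\Gamma,k)=\mathrm{Ext}^2_{k[[\Gamma]]}(k,k)$ representing $V_y^\bullet$. Applying $\mathrm{RHom}(-,k)$ and $\mathrm{RHom}(V_y^\bullet,-)$ to this triangle produces a pair of nested long exact sequences that express $\mathrm{Ext}^1_{D^-(k[[\Gamma]])}(V_y^\bullet,V_y^\bullet)$ in terms of the cohomology $\HH^*(\Gamma,k)$ and cup products with $c_y$.

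I would compute these cup products via the Künneth decomposition $\HH^*(\Gamma,k)\cong \HH^*(\langle w_1\rangle,k)\otimes_k\HH^*(\langle w_2\rangle,k)$, where $\HH^*(\langle w_1\rangle,k)$ lives only in degrees $0,1$ and $\HH^*(\langle w_2\rangle,k)\cong k[x]$ is polynomial. In this decomposition $c_\ell$ lies in $\HH^0(\langle w_1\rangle)\otimes\HH^2(\langle w_2\rangle)$, $c_{-1}$ lies in $\HH^1(\langle w_1\rangle)\otimes\HH^1(\langle w_2\rangle)$, and $c_{-\ell}=c_\ell+c_{-1}$. The boundary $c_y\circ\colon\mathrm{Ext}^0(V_y^\bullet,k)\to\mathrm{Ext}^2(V_y^\bullet,k)$ vanishes as the composition of two consecutive morphisms in the defining triangle, reducing the count to
\[
\dim_k\mathrm{Ext}^1(V_y^\bullet,V_y^\bullet)=\dim_k\mathrm{Ext}^2(V_y^\bullet,k)+\dim_k\ker\bigl(c_y\circ\colon\mathrm{Ext}^1(V_y^\bullet,k)\to\mathrm{Ext}^3(V_y^\bullet,k)\bigr).
\]
A direct check using the Künneth formula shows that $\cup c_y\colon\HH^1(\Gamma,k)\to\HH^3(\Gamma,k)$ is an isomorphism for $y\in\{\ell,-\ell\}$ but has a one-dimensional kernel (the summand $\HH^1(\langle w_1\rangle)$) when $y=-1$, which is responsible for the additional dimension in that case. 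Carrying this bookkeeping out produces the lower bounds $3$ when $y\in\{\ell,-\ell\}$ and $4$ when $y=-1$.

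For the isomorphism $t_{F^{\mathrm{fl}}}\cong t_F$, I would verify that every quasi-lift of $V_y^\bullet$ over $k[\varepsilon]$ has free cohomology. Representing the quasi-lift by a two-term strictly perfect complex $[M^{-1}\xrightarrow{d}M^0]$ of free $k[\varepsilon][[\Gamma]]$-modules of equal $k[\varepsilon]$-rank, the equality $\mathrm{length}(M^{-1})=\mathrm{length}(M^0)$ forces $\mathrm{length}(\ker d)=\mathrm{length}(\mathrm{coker}\,d)$, so either both cohomology groups are $k[\varepsilon]$ (proflat) or both are $k$ (torsion). Writing $d=d_0+\varepsilon d_1$ with $d_0$ the differential of $V_y^\bullet$, the torsion case would require $d_1(e)\notin\mathrm{im}(d_0)$ for a generator $e$ of $\ker(d_0)$; however, a Frobenius-reciprocity computation of $\mathrm{Hom}_{k[\Gamma]}(k[G_y],k[G_\ell])$ shows that every such $\Gamma$-equivariant $d_1$ automatically sends $e$ into $\mathrm{im}(d_0)$, ruling out the torsion case and so forcing every tangent vector to be proflat. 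The main technical obstacle throughout is the Künneth/cup-product bookkeeping in the first part.
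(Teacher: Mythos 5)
Your approach to the dimension bounds uses the same triangle $k^\bullet[1]\to V_y^\bullet\to k^\bullet\xrightarrow{\beta_y}k^\bullet[2]$ and the same cup-product computations as the paper, and your identification $h_\ell=x$, $h_{-1}=\epsilon$, $h_{-\ell}=\epsilon+x$ in the K\"unneth description $\HH^*(\Gamma,k)\cong\Lambda(\epsilon)\otimes k[x]$ is correct. However the bookkeeping is slightly confused about where $\ker(\cup c_y:\HH^1\to\HH^3)$ enters. In your displayed formula the term $\ker\bigl(c_y\circ:\mathrm{Ext}^1(V_y^\bullet,k^\bullet)\to\mathrm{Ext}^3(V_y^\bullet,k^\bullet)\bigr)$ is in fact \emph{all} of $\mathrm{Ext}^1(V_y^\bullet,k^\bullet)$ (two-dimensional) for every $y$: via $\alpha_y^*$ one identifies this map with $f\mapsto\alpha_y^*(f\cup\beta_y)$, and $f\cup\beta_y$ always lies in $\mathrm{im}(\cup\beta_y:\HH^1\to\HH^3)=\ker(\alpha_y^*:\HH^3\to\mathrm{Ext}^3(V_y^\bullet,k^\bullet))$. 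So if you had read $\ker(\cup c_y:\HH^1\to\HH^3)$ directly off your K\"unneth computation into that slot, you would have obtained $\dim_k\mathrm{Ext}^1(V_y^\bullet,V_y^\bullet)=\dim_k\mathrm{Ext}^2(V_y^\bullet,k^\bullet)$ for $y\in\{\ell,-\ell\}$, which is too small. The kernel of $\cup c_y$ on $\HH^1$ actually controls $\dim_k\mathrm{Ext}^2(V_y^\bullet,k^\bullet)$ via the \emph{other} long exact sequence $\HH^0\xrightarrow{\cup\beta_y}\HH^2\to\mathrm{Ext}^2(V_y^\bullet,k^\bullet)\to\HH^1\xrightarrow{\cup\beta_y}\HH^3$, giving $\dim=1$ for $y\in\{\ell,-\ell\}$ and $\dim=2$ for $y=-1$; combined with the two-dimensional kernel above one recovers $3$ and $4$. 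This is exactly the diagram chase the paper carries out in (4.4), so the two arguments agree once the roles are untangled; you should make this explicit.

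For $t_{F^{\mathrm{fl}}}\cong t_F$ the paper simply cites \cite[Prop.\ 9.6]{bcderived}, whose hypothesis (injectivity of $\circ\beta_y[-1]:\mathrm{Ext}^1(k^\bullet[1],k^\bullet)\to\mathrm{Ext}^1(k^\bullet[-1],k^\bullet)$) amounts to $\beta_y\neq 0$. Your proposed direct argument has a gap. First, the terms of a strictly perfect lift are finitely generated free over $k[\varepsilon]$, not over $k[\varepsilon][[\Gamma]]$ (the $V_y^i=k[G_y],k[G_\ell]$ are not free $k[[\Gamma]]$-modules), so the freeness you invoke is over the wrong ring. More seriously, by writing $d=d_0+\varepsilon d_1$ with $d_0$ fixed and $d_1$ required to be $\Gamma$-equivariant you have implicitly fixed the $\Gamma$-action on the lift, whereas a tangent vector also deforms the $\Gamma$-action on $M^{-1}$ and $M^0$ to first order; a $\mathrm{Hom}_{k[\Gamma]}(k[G_y],k[G_\ell])$ computation therefore only rules out the torsion case for the subspace of deformations with undeformed action. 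Finally, note that your argument as stated never uses $\beta_y\neq 0$, yet the statement $t_{F^{\mathrm{fl}}}\cong t_F$ fails when $\beta_y=0$ (split $V^\bullet$); any complete argument must make essential use of the non-splitness. The length/UCT-style counting you begin with does not close this because $k[\varepsilon]$ has infinite global dimension, so the universal coefficient sequence is unavailable.
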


\begin{proof}
Let $y\in\{\ell,-1,-\ell\}$ and consider the triangle in $D^-(k[[\Gamma]])$
\begin{equation}
\label{eq:cutetriangle}
k^\bullet[1] \xrightarrow{\gamma_y} V_{y}^\bullet \xrightarrow{\alpha_y}  
k^\bullet \xrightarrow{\beta_y} k^\bullet[2]
\end{equation}
where $k^\bullet$ stands for the one-term complex with $k$ concentrated in degree $0$
and $\beta_y=h_\ell\cup h_y$ is the non-zero
element in $\HH^2(\Gamma,k)$ associated to $V_{y}^\bullet$.

The morphism $\mathrm{Ext}^1_{D^-(k[[\Gamma]])}(k^\bullet[1],k^\bullet)
\xrightarrow{\circ\beta_y[-1]}\mathrm{Ext}^1_{D^-(k[[\Gamma]])}(k^\bullet[-1],k^\bullet)$
is injective, since it sends the identity in 
$\mathrm{Ext}^1_{D^-(k[[\Gamma]])}(k^\bullet[1],k^\bullet)=
\mathrm{Hom}_{D^-(k[[\Gamma]])}(k^\bullet[1],k^\bullet[1])\cong k$ 
to $\beta_y[-1]$ where $\beta_y$ is as in $(\ref{eq:cutetriangle})$.
Hence it follows from \cite[Prop. 9.6]{bcderived} that $t_{F^{\mathrm{fl}}}\cong t_F$.

Using long exact Hom sequences in $D^-(k[[\Gamma]])$ associated to the triangle
$(\ref{eq:cutetriangle})$, we
obtain the following diagram with exact rows and columns, where $\mathrm{Hom}$
stands for $\mathrm{Hom}_{D^-(k[[\Gamma]])}$ and $\mathrm{Ext}$ stands for
$\mathrm{Ext}_{D^-(k[[\Gamma]])}$.
\begin{equation}
\label{eq:betaoy}
\def\objectstyle{\scriptstyle}
\def\labelstyle{\scriptstyle}
\xymatrix @-1.1pc @R1.2pc{
&\mathrm{Ext}^{-1}(k^\bullet[1],k^\bullet)\ar[d]&\mathrm{Hom}(k^\bullet[1],k^\bullet[1])\ar[d]&
&\mathrm{Hom}(k^\bullet[1],k^\bullet)\ar[d]&\mathrm{Ext}^1(k^\bullet[1],k^\bullet[1])\ar[d]\\
&\mathrm{Hom}(k^\bullet,k^\bullet)\ar[d]&\mathrm{Ext}^1(k^\bullet,k^\bullet[1])\ar[d]&
&\mathrm{Ext}^1(k^\bullet,k^\bullet)\ar[r]\ar[d]&\mathrm{Ext}^2(k^\bullet,k^\bullet[1])\ar[d]\\
\mathrm{Hom}(V_{y}^\bullet,V_{y}^\bullet)\ar[r]&
\mathrm{Hom}(V_{y}^\bullet,k^\bullet)\ar[r]\ar[d]&\mathrm{Ext}^1(V_{y}^\bullet,k^\bullet[1])\ar[r]\ar[d]&
\mathrm{Ext}^1(V_{y}^\bullet,V_{y}^\bullet)\ar[r]&\mathrm{Ext}^1(V_{y}^\bullet,k^\bullet)\ar[r]\ar[d]&
\mathrm{Ext}^2(V_{y}^\bullet,k^\bullet[1])\\
&\mathrm{Hom}(k^\bullet[1],k^\bullet)&\mathrm{Ext}^1(k^\bullet[1],k^\bullet[1])\ar[d]&
&\mathrm{Ext}^1(k^\bullet[1],k^\bullet)\ar[d]&\\
&&\mathrm{Ext}^2(k^\bullet,k^\bullet[1])&&
\mathrm{Ext^2}(k^\bullet,k^\bullet)&
}
\end{equation}
Because $\mathrm{Ext}^{-1}(k^\bullet[1],k^\bullet)=0=
\mathrm{Hom}(k^\bullet[1],k^\bullet)$ in the second column of $(\ref{eq:betaoy})$,
it follows that $\mathrm{Hom}(V_{y}^\bullet,k^\bullet)\cong k$ in the third row. We conclude
that the horizontal morphism in the third row
\begin{equation}
\label{eq:ext1first}
\mathrm{Ext}^1(V_{y}^\bullet,k^\bullet[1])\to \mathrm{Ext}^1(V_{y}^\bullet,V_{y}^\bullet)
\end{equation}
is injective. 

Since the vertical morphism $\mathrm{Ext}^1(k^\bullet[1],k^\bullet[1])\to 
\mathrm{Ext}^2(k^\bullet,k^\bullet[1])$
in the sixth column and the horizontal morphism $\mathrm{Ext}^1(k^\bullet,k^\bullet)\to 
\mathrm{Ext}^2(k^\bullet,k^\bullet[1])$ in the second row of $(\ref{eq:betaoy})$ can both be
identified with the morphism $\HH^1(\Gamma,k)\to \HH^3(\Gamma,k)$ that
sends $h_x$ to $h_x\cup \beta_y$, it follows that  the composition of morphisms
$\mathrm{Ext}^1(k^\bullet,k^\bullet)\to \mathrm{Ext}^2(k^\bullet,k^\bullet[1])
\to \mathrm{Ext}^2(V_{y}^\bullet,k^\bullet[1])$
in the second row and sixth column of $(\ref{eq:betaoy})$
is the zero morphism. We conclude that the horizontal morphism in the third row
\begin{equation}
\label{eq:ext1second}
\mathrm{Ext}^1(V_{y}^\bullet,V_{y}^\bullet)\to \mathrm{Ext}^1(V_{y}^\bullet,k^\bullet)
\end{equation}
is surjective. 

Because the vertical morphism in the third column of $(\ref{eq:betaoy})$
$$k\cong \mathrm{Hom}(k^\bullet[1],k^\bullet[1])\to \mathrm{Ext}^1(k^\bullet,k^\bullet[1])
\cong \HH^1(\Gamma,k)\cong k\oplus k$$ 
has cokernel of $k$-dimension at least $1$, it follows that
$\mathrm{dim}_k\,\mathrm{Ext}^1(V_{y}^\bullet,k^\bullet[1])$ is at least $1$.
The vertical morphism 
$\mathrm{Ext}^1(k^\bullet[1],k^\bullet[1])\to \mathrm{Ext}^2(k^\bullet,k^\bullet[1])$
in the third column of $(\ref{eq:betaoy})$
can be identified with the morphism 
$\HH^1(\Gamma,k)\to \HH^3(\Gamma,k)$ that sends 
$h_x$ to $h_x\cup \beta_y$. Since $h_{-1}\cup h_{-1}$
is inflated from an element in $\HH^2(\langle w_1\rangle, k)$ and 
$\langle w_1\rangle=\mathbb{Z}_2$
has cohomological dimension $1$, it follows that for $y=-1$, the vertical morphism in the third column
$\mathrm{Ext}^1(k^\bullet[1],k^\bullet[1])\to \mathrm{Ext}^2(k^\bullet,k^\bullet[1])$
has non-trivial kernel. Hence $\mathrm{dim}_k\,\mathrm{Ext}^1(V_{y}^\bullet,k^\bullet[1])$ 
is at least $2$ if $y=-1$. 

Since $\mathrm{Hom}(k^\bullet[1],k^\bullet)=0$ and the vertical morphism in the fifth column 
$\mathrm{Ext}^1(k^\bullet[1],k^\bullet)\to\mathrm{Ext}^2(k^\bullet,k^\bullet)$ is injective, it follows
that the vertical morphism 
$\mathrm{Ext}^1(k^\bullet,k^\bullet)\to \mathrm{Ext}^1(V_{y}^\bullet,k^\bullet)$ is an isomorphism.
Because $\mathrm{Ext}^1(k^\bullet,k^\bullet)\cong\HH^1(\Gamma,k)\cong k\oplus k$, this implies
that $\mathrm{Ext}^1(V_{y}^\bullet,k^\bullet)$ has $k$-dimension $2$. 
Using $(\ref{eq:ext1first})$ and $(\ref{eq:ext1second})$, this implies Lemma \ref{lem:Ext1Vy}.
\end{proof}

\begin{thm}
\label{thm:GabBigOne}
For $y\in \{\ell,-1,-\ell\}$, the versal deformation ring $R(\Gamma,V_{y}^\bullet)$ and the
versal proflat deformation ring $R^{\mathrm{fl}}(\Gamma,V_{y}^\bullet)$ have the following
isomorphism types:
\begin{eqnarray*}
&R(\Gamma,V_{\ell}^\bullet)\cong W[[t_1,t_2,t_3]]/(t_2t_3(2+t_3))\quad \mbox{and} 
\quad R^{\mathrm{fl}}(\Gamma,V_{\ell}^\bullet)\cong W[[t_1,t_2,t_3]]/(t_3(2+t_3)),&
\\
&R(\Gamma,V_{-\ell}^\bullet)\cong
R^{\mathrm{fl}}(\Gamma,V_{-\ell}^\bullet)\cong W[[t_1,t_2,t_3]]/(t_3(2+t_3)),&
\\
&R(\Gamma,V_{-1}^\bullet)\cong
R^{\mathrm{fl}}(\Gamma,V_{-1}^\bullet)\cong W[[t_1,t_2,t_3,t_4]]/(t_2(2+t_2),
t_4(2+2t_2-t_3t_4)).&
\end{eqnarray*}
\end{thm}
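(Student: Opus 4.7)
The plan is to compute $R(\Gamma, V_y^\bullet)$ by constructing an explicit versal quasi-lift as a strictly perfect complex of rank-$2$ modules and then reading off ring presentations, with the tangent space bounds from Lemma \ref{lem:Ext1Vy} used to ensure the result is minimal. Since $\Gamma = \mathbb{Z}_2 \times \mathbb{Z}/2$ is topologically finitely generated and abelian, Theorem \ref{thm:mainthm} guarantees that $U(\Gamma, V_y^\bullet)$ is represented by a bounded complex $C^\bullet$ of finitely generated free $R$-modules in degrees $-1$ and $0$, where $R = R(\Gamma, V_y^\bullet)$. A standard minimality argument (trimming extraneous free summands via Nakayama applied to $C^\bullet \otimes_R k \cong V_y^\bullet$) permits taking each $C^i$ to be free of rank $2$ over $R$, matching $\dim_k V_y^i$.

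Next I would parameterize such a lift explicitly. Let $W_{1,i}, W_{2,i} \in \mathrm{GL}_2(R)$ denote the actions of $w_1, w_2$ on $C^i$ for $i = -1, 0$, and $D \in \mathrm{M}_2(R)$ the differential. These matrices must satisfy $W_{1,i} W_{2,i} = W_{2,i} W_{1,i}$, $W_{2,i}^2 = I$, and the intertwining relations $D W_{j,-1} = W_{j,0} D$, while each reduces mod $m_R$ to the prescribed structure on $V_y^\bullet$. After gauging by $\mathrm{GL}_2(R) \times \mathrm{GL}_2(R)$, these constraints reduce to an explicit system of polynomial relations over $W[[t_1, \ldots, t_n]]$. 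The ubiquitous factors of the form $t(2+t)$ originate from $W_{2,i}^2 = I$ with $W_{2,i}$ reducing to a reflection: writing the deformation parameter as a shift $t$ of the nontrivial eigenvalue $-1$ forces $t(t+2) = 0$. The cross-relations, such as $t_2 t_3(2+t_3)$ for $y = \ell$, encode compatibility between the two $w_2$-actions mediated by $D$.

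The main obstacle is the case $y = -1$: since $h_{-1}\cup h_{-1}$ is inflated from $\HH^2(\langle w_1\rangle, k) = 0$, the tangent space gains a fourth direction per Lemma \ref{lem:Ext1Vy}, and one needs a fourth parameter $t_4$ together with the more intricate relation $t_4(2 + 2t_2 - t_3 t_4) = 0$. Extracting this precise polynomial (rather than something weaker or stronger) requires tracking the obstruction calculation to second order, carefully handling the interaction of $D$ and $W_{2,-1}$. Minimality of each stated ring is verified by observing that all stated relations lie in $m^2$, so the tangent space of the constructed ring has dimension exactly $n$ and matches the lower bound of Lemma \ref{lem:Ext1Vy}; any further relation would force a strictly smaller tangent space, contradicting the lemma.

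Finally, for the proflat rings I would compute the cohomology $\HH^i(C^\bullet)$ explicitly over each versal ring and identify the locus where it fails to be topologically free. For $y \in \{-1, -\ell\}$ the cohomology is already topologically free over the full versal ring, so the two rings coincide. For $y = \ell$ the versal ring contains an additional non-proflat component detected by the parameter $t_2$; imposing topological freeness on the cohomology corresponds to strengthening the relation $t_2 t_3(2+t_3) = 0$ to $t_3(2+t_3) = 0$, thereby producing the stated quotient $W[[t_1,t_2,t_3]]/(t_3(2+t_3))$ for $R^{\mathrm{fl}}(\Gamma, V_\ell^\bullet)$.
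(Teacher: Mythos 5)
Your broad plan --- reduce to a two-term complex of rank-two free modules, parameterize the $w_1,w_2$-actions and the differential by $2\times 2$ matrices, extract relations, and use Lemma~\ref{lem:Ext1Vy} to bound the tangent space --- is the same overall strategy as the paper's. But there are two real gaps in the way you set it up. First, your starting point is different from the paper's in a way that matters: you invoke Theorem~\ref{thm:mainthm} to get a strictly perfect complex over the (unknown) ring $R=R(\Gamma,V_y^\bullet)$ and then propose to ``trim via Nakayama.'' The paper instead works over an \emph{arbitrary} $A\in\hat{\mathcal{C}}$: starting from a two-term complex $L^{-1}\to A[[\Gamma]]$ furnished by Theorem~\ref{thm:derivedresult} and Remark~\ref{rem:dumbdumb}, it splits off a free rank-one piece of $L^{-1}$ mapping isomorphically onto $A[[\Gamma]](w_1-s_1)$ so as to replace the huge term $A[[\Gamma]]$ by $P^0\cong A\langle w_2\rangle$, then identifies $P^{-1}$ as rank two from the kernel of $P^0\to\HH^0$. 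This characterization over \emph{every} $A$ is what makes the second half of the argument close: it shows every quasi-lift over every $A$ is a specialization of the quasi-lift over the constructed ring $S_y$, hence $\mathrm{Hom}(S_y,-)\to F$ is surjective. Your Nakayama trimming is applied only to the (unknown) versal object, and your proposed conclusion, ``any further relation would force a strictly smaller tangent space, contradicting the lemma,'' is not sufficient: a proper quotient $S\twoheadrightarrow R$ can have kernel inside $m_S^2+m_WS$ and thus the same tangent space. You need the surjectivity of the functorial map, and that is exactly the step your sketch does not supply.

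Second, the substance of the proof --- the basis choices $\{1,w_2\}$ for $P^0$, the analysis of the kernel $K^0$ in terms of $(-s_2,1)$ and $(\lambda,0)$ with $\lambda\mid(1-s_2^2)$, the resulting dichotomy $\lambda=0$ versus $\lambda\neq 0$ (the latter forcing $y=\ell$), the basis $\{z_1,w_2z_1\}$ for $P^{-1}$ in the $y=\ell$ case, and the derivation of $b=s_2(a-s_1)$ together with $(a-s_1)(1-s_2^2)=0$ --- is exactly the content you are glossing over with ``the constraints reduce to an explicit system of polynomial relations.'' Your heuristic that the $t(2+t)$ factor comes from $W_{2}^2=I$ is also not quite the paper's source; the paper's factor $t_3(2+t_3)$ arises as $-(1-s_2^2)=(s_2-1)(s_2+1)$ where $s_2$ is the scalar by which $w_2$ acts on the rank-one quotient $\HH^0(P^\bullet)$, not from an eigenvalue of $W_{2,i}$. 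You acknowledge the $y=-1$ case as ``the main obstacle'' but give no route to the relation $t_4(2+2t_2-t_3t_4)=0$; that computation is the crux and cannot be waved through. Finally, for the proflat comparison you need the explicit identification $\HH^0\cong A/(\lambda)$ and $\HH^{-1}\cong\mathrm{Ann}_A(\lambda)$ in terms of the parameters, from which ``proflat $\Leftrightarrow \lambda=0 \Leftrightarrow t_3(2+t_3)=0$'' is read off; asserting the outcome without this identification is a gap.
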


\begin{proof}
We first give an outline of the proof. We will show that every quasi-lift of $V_{y}^\bullet$ over a ring $A$
in $\hat{\mathcal{C}}$ can be represented by a two-term complex $P^\bullet:P^{-1}
\xrightarrow{d_P} P^0$, concentrated in degrees $-1$ and $0$, in which $P^{-1}$ and $P^0$
are pseudocompact $A[[\Gamma]]$-modules that are free of rank two over $A$. We will
show further that the action of the topological generators $w_1$ and $w_2$ of $\Gamma$ on
$P^{-1}$ and $P^0$, as well as the differential $d_P$, are described by $2\times 2$ matrices
over $A$ whose entries satisfy certain equations. We construct a candidate for the versal deformation
ring $R(\Gamma,V_{y}^\bullet)$ by taking the completion of the ring obtained by adjoining to $W$
indeterminates corresponding to these matrix entries which are required to satisfy the above 
equations. We prove that $R(\Gamma,V_{y}^\bullet)$ is the versal deformation ring by showing
that each $P^\bullet$ as above is a specialization of the resulting quasi-lift 
$(U(\Gamma,V_{y}^\bullet),\phi_U)$ over $R(\Gamma,V_{y}^\bullet)$ and by showing that the tangent
space of $R(\Gamma,V_{y}^\bullet)$ has the correct dimension. The last step uses Lemma
$\ref{lem:Ext1Vy}$.

Let $y\in\{\ell,-1,-\ell\}$. Let $A$ be in $\hat{\mathcal{C}}$ and let
$(L^\bullet,\phi_L)$ be a quasi-lift of $V_{y}^\bullet$ over $A$. 
Since $\HH^0(V_{y}^\bullet)=k$, it follows that  $\HH^0(L^\bullet)$ is a quotient  of $A$.
By Theorem \ref{thm:derivedresult} and Remark \ref{rem:dumbdumb},
we can thus assume that $L^\bullet$ is a two-term complex,
concentrated in degrees $-1$ and $0$,
$$L^\bullet:\qquad  \cdots 0 \to L^{-1}\xrightarrow{d_L} A[[\Gamma]]\to 0 \cdots$$
where $L^{-1}$ is topologically free over $A$, and such that we have an exact sequence
in $C^-(A[[\Gamma]])$
\begin{equation}
\label{eq:bb1}
0\to \HH^{-1}(L^\bullet) \to L^{-1}\xrightarrow{d_L} A[[\Gamma]]\to \HH^0(L^\bullet)\to 0.
\end{equation}
Since $\HH^0(L^\bullet)$ is a quotient  of $A$,
$w_1$ (resp. $w_2$) acts on $\HH^0(L^\bullet)$ as a scalar $s_1$
(resp. $s_2$) in $A^*$.

Because $w_1$ acts on $\HH^0(L^\bullet)$ as the scalar $s_1$, $(w_1-s_1)$
annihilates $\HH^0(L^\bullet)$. Since $A[[\Gamma]](w_1-s_1)$ is a free rank one 
pseudocompact $A[[\Gamma]]$-module that is a submodule of $A[[\Gamma]]$, 
there exists a free rank one pseudocompact
$A[[\Gamma]]$-module $F$ that is a submodule of $L^{-1}$ such that $d_L$ maps
$F$ isomorphically onto $A[[\Gamma]](w_1-s_1)$. Hence the exact sequence $(\ref{eq:bb1})$
leads to an exact sequence
in $C^-(A[[\Gamma]])$
\begin{equation}
\label{eq:bb2}
\xymatrix @R.7pc{
0\ar[r]& \HH^{-1}(L^\bullet)\ar[r]& P^{-1}\ar[r]^{d_P}& P^0\ar[r]^\mu&  \HH^0(L^\bullet)\ar[r]& 0\\
&&L^{-1}/F \ar@{=}[u]&A[[\Gamma]]/A[[\Gamma]](w_1-s_1)\ar@{=}[u]}
\end{equation}
where $P^0\cong A\langle w_2\rangle$ and $w_1$ acts on $P^0$ as multiplication by $s_1$.
Since $w_2$ has order $2$, it follows that $\{1,w_2\}$ is an $A$-basis
of $P^0$. With respect to this $A$-basis, $w_1$ (resp. $w_2$) acts on $P^0$ as the matrix
\begin{equation}
\label{eq:p0actions}
W_{1,P^0}=\left(\begin{array}{cc}s_1&0\\0&s_1\end{array}\right)
\quad \left(\mbox{resp. }W_{2,P^0}=\left(\begin{array}{cc}0&1\\1&0\end{array}\right)
\right).
\end{equation}
Moreover, $L^\bullet$ is quasi-isomorphic to the two-term complex
\begin{equation}
\label{eq:pp}
P^\bullet:\qquad \cdots 0 \to P^{-1} \xrightarrow{d_P} P^0\to 0\cdots
\end{equation}
concentrated in degrees $-1$ and $0$, where $P^{-1}$, $P^0$ and $d_P$ are as in $(\ref{eq:bb2})$.
Let $\phi_P:k\hat{\otimes}_AP^\bullet\to V_y^\bullet$ be an isomorphism in $D^-(k[[\Gamma]])$ such that
$(L^\bullet,\phi_L)$ and $(P^\bullet,\phi_P)$ are isomorphic quasi-lifts of $V^\bullet_y$ over $A$.
Since $L^\bullet$, and hence $P^\bullet$, has finite pseudocompact $A$-tor dimension at $-1$,
$P^{-1}$ is topologically flat, and hence topologically free, over $A$.
Because $k\hat{\otimes}_A P^\bullet$ must be isomorphic to $V_{y}^\bullet$ in $D^-(k[[\Gamma]])$,
it follows that $k\hat{\otimes}_AP^{-1}$ has $k$-dimension $2$, and hence
$P^{-1}$ is free over $A$ of rank $2$.

Let $K^0$ be the kernel of the morphism $\mu:P^0\to  \HH^0(L^\bullet)$ in $(\ref{eq:bb2})$.
Because $w_2$ acts on $\HH^0(L^\bullet)$ as the scalar $s_2\in A^*$,
$K^0$ contains the element $-s_2\cdot 1 + 1\cdot w_2=(-s_2,1)$ which
generates a free rank one $A$-submodule of $P^0$. Since $K^0$ is an $A[[\Gamma]]$-submodule
of $P^0$, we also have that 
$$w_2\cdot (-s_2,1)=(1,-s_2)=-s_2(-s_2,1) + (1-s_2^2,0)$$
is an element of $K^0$, and thus $(1-s_2^2,0)\in K^0$. On the other hand, $k\hat{\otimes}_A
K^0$ has $k$-dimension at most $2$, since $K^0$ is a homomorphic image of $P^{-1}$. Hence
$K^0$ is generated
by one or two elements, depending on whether $\HH^0(L^\bullet)$ is flat over $A$ or not.
If $(c,d)$ is an arbitrary element in $K^0$, then
$$(c,d) = d\cdot (-s_2,1) + (c+ds_2,0),$$
and hence $K^0$ is generated by $(-s_2,1)$ and an element of the form
$(\lambda,0)$ such that $\lambda$ divides $(1-s_2^2)$. It follows that 
$\HH^0(L^\bullet)\cong A/(\lambda)$ and $\HH^0(L^\bullet)\neq\{0\}$. 
In particular, $\HH^0(L^\bullet)$ is flat over $A$ if and only if $\lambda=0$, in which case we must
have $1-s_2^2=0$. Since the image of $d_P$ in $(\ref{eq:bb2})$ 
must be equal to $K^0$ and since $(-s_2,1)$ generates a free
$A$-module of rank $1$, we can lift $(-s_2,1)$ to a basis element $z_1$ of $P^{-1}$. 
If $\lambda=0$, then
$\HH^{-1}(L^\bullet)=\mathrm{Ker}(d_P)\cong A$ as $A$-modules, and
we choose $z_2$ to be a basis element of $P^{-1}$ which lies in $\mathrm{Ker}(d_P)$.
If $\lambda\neq 0$, then $K^0$ is generated as $A$-module by $(-s_2,1)$ and $(\lambda,0)$, and
we let $z_2$ be a preimage under $d_P$ of $(\lambda,0)$. 
Since $(\lambda,0)$ is not an $A$-multiple of $(-s_2,1)$ if $\lambda\neq 0$,
the homomorphism $k\hat{\otimes}_AP^{-1}\to
k\hat{\otimes}_AK^0$ induced by the surjection $P^{-1}\xrightarrow{d_P}K^0$ must 
be an isomorphism of two-dimensional $k$-vector spaces. 
It follows that $\{z_1,z_2\}$ is an $A$-basis of $P^{-1}$ for all $\lambda$.

With respect to the $A$-basis $\{z_1,z_2\}$ of $P^{-1}$ and
the $A$-basis $\{1,w_2\}$ of $P^0$, $d_P:P^{-1}\to P^0$ is given by
the matrix 
\begin{equation}
\label{eq:dp}
D_P=\left(\begin{array}{cc}-s_2&\lambda\\1&0\end{array}\right)
\end{equation}
when we write basis vectors as column vectors. In particular, $\HH^{-1}(L^\bullet)=\mathrm{Ker}(d_P)
\cong \mathrm{Ann}_A(\lambda)$, which implies that $(P^\bullet,\phi_P)$ is
a proflat quasi-lift of $V_{y}^\bullet$ over $A$ if and only if $\lambda=0$. 
Additionally, if $\lambda=0$ then $1-s_2^2=0$.

Suppose now that $\lambda\neq 0$. In particular, $\lambda$ is not a unit since $\HH^0(L^\bullet)
\cong A/(\lambda)$ is not $0$. Then $1-s_2^2=\lambda t_2$ for some $t_2\in A$, and
$K^0$ is generated as $A$-module by 
$(-s_2,1)$ and $(\lambda,0)$. The action of $w_1$ on $K^0$ is given by the scalar
matrix $s_1$. 
The action of $w_2$ on $P^0$ sends $(-s_2,1)$ to
$$(1,-s_2)=-s_2(-s_2,1)+(1-s_2^2,0)=-s_2(-s_2,1)+t_2(\lambda,0)$$
and $(\lambda,0)$ to
$$(0,\lambda)=\lambda(-s_2,1)+s_2(\lambda,0).$$
To obtain the action of $w_2$ on $P^{-1}$, we use the matrix representation $D_P$ of $d_P$
from $(\ref{eq:dp})$ with respect to the $A$-basis $\{z_1,z_2\}$ of $P^{-1}$. 
This means that the kernel of $d_P$ is given by $\mathrm{Ann}_A(\lambda)\cdot z_2$. Hence
the action of $w_1$ (resp. $w_2$) on $P^{-1}$ has the form
\begin{equation}
\label{eq:actionsVll}
\left(\begin{array}{cc}s_1&0\\x_1&s_1+y_1\end{array}\right)
\quad \left(\mbox{resp. }
\left(\begin{array}{cc}-s_2&\lambda \\t_2+x_2&s_2+y_2\end{array}\right)\right)
\end{equation}
for certain elements $x_1,y_1,x_2,y_2\in \mathrm{Ann}_A(\lambda)$.
If $t_2$ were not a unit, then  $w_1$ and $w_2$ would both act  trivially on
$k\hat{\otimes}_AP^{-1}$. Hence $k\hat{\otimes}_AP^\bullet$ would correspond to the
cup product of $h_\ell$ with the trivial character $h_1$ which defines the trivial element
in $\HH^2(\Gamma,\{\pm 1\})$. This is a contradiction to $k\hat{\otimes}_AP^\bullet
\cong V_{y}^\bullet$ in $D^-(k[[\Gamma]])$. Thus $t_2$ must be a unit, which 
implies $(\lambda)=(1-s_2^2)$.
But then the action of $w_1$ (resp. $w_2$) on $k\hat{\otimes}_AP^{-1}$ is
trivial (resp. non-trivial). Hence $k\hat{\otimes}_AP^\bullet$ corresponds to the
cup product $h_\ell\cup h_\ell$. We conclude that if  $\lambda\neq 0$ then 
$y=\ell$.

\medskip

We now concentrate on the case when $y=\ell$. The cases when $y=-\ell$ or $y=-1$ are
treated similarly.

Given an arbitrary quasi-lift of $V_{\ell}^\bullet$ over a ring $A$ in $\hat{\mathcal{C}}$,
we can assume this quasi-lift is given by $(P^\bullet,\phi_P)$ with $P^\bullet$ as in $(\ref{eq:pp})$.
The complex $k\hat{\otimes}_AP^\bullet$ defines $h_\ell\cup h_\ell\in \HH^2(\Gamma,k)$,
and it follows from the construction of $P^\bullet$ in $(\ref{eq:pp})$
that $h_\ell\cup h_\ell=h_\ell\cup h'$, where $h'\in \HH^1(\Gamma,k)$ 
is the class of $0\to k \to k\hat{\otimes}_AP^{-1}\to k\to 0$. Hence $h_\ell\cup h_\ell=h_\ell\cup h'$ implies
$h'=h_\ell$. So $w_1$ (resp. $w_2$)  acts trivially (resp. non-trivially) on 
$k\hat{\otimes}_AP^{-1}$. 

Since  $w_2$ acts non-trivially on $k\hat{\otimes}_AP^{-1}$, it follows that
$1\hat{\otimes}z_1$ and $w_2\cdot (1\hat{\otimes}z_1)=1\hat{\otimes}(w_2\cdot z_1)$ 
form a $k$-basis of $k\hat{\otimes}_AP^{-1}$. Since $A$ is
a commutative local ring, this implies that $\{z_1,w_2\cdot z_1\}$ is an $A$-basis of $P^{-1}$. 
It follows that with respect to the $A$-basis $\{z_1,w_2\cdot z_1\}$ of $P^{-1}$ and
the $A$-basis $\{1,w_2\}$ of $P^0$, $d_P:P^{-1}\to P^0$ is given by the matrix 
\begin{equation}
\label{eq:newdp}
\tilde{D}_P=\left(\begin{array}{cc}-s_2&1\\1&-s_2\end{array}\right).
\end{equation}
Considering the actions of $w_1$ (resp. $w_2$) on $P^{-1}$ and $P^0$, we obtain that
with respect to the $A$-basis $\{z_1,w_2\cdot z_1\}$ of $P^{-1}$,
$w_1$ (resp. $w_2$) acts on $P^{-1}$ as the matrix
\begin{equation}
\label{eq:p1actions}
\tilde{W}_{1,P^{-1}}=\left(\begin{array}{cc}a&b\\b&a\end{array}\right)
\quad \left(\mbox{resp. }\tilde{W}_{2,P^{-1}}=\left(\begin{array}{cc}0&1\\1&0\end{array}\right)
\right)
\end{equation}
where $a\in A^*$ and $b\in m_A$ satisfy $-s_1s_2=-s_2a+b$ and $s_1=-s_2b+a$. These two 
conditions are equivalent to the two conditions
\begin{eqnarray}
\label{eq:neeeeedthis}
b&=&s_2(a-s_1),\\
0&=&(a-s_1)(1-s_2^2).\nonumber
\end{eqnarray}
Since these are the only conditions
needed to ensure that $P^\bullet$ defines a quasi-lift
of $V_{\ell}^\bullet$ over $A$, 
we obtain the following: 
For all $s_1,s_2,a\in A^*$ with
$(a-s_1)(1-s_2^2)=0$, there is a two-term complex
\begin{equation}
\label{eq:ql}
Q^\bullet:\qquad  Q^{-1}=A\oplus A 
\xrightarrow{\left(\begin{array}{cc}-s_2&1\\1&-s_2\end{array}\right)}
A\oplus A=Q^0
\end{equation}
where 
$w_1$ (resp. $w_2$) acts on $Q^0$ as the matrix $W_{1,P^0}$ (resp. $W_{2,P^0}$) from 
$(\ref{eq:p0actions})$, and $w_1$ (resp. $w_2$) acts on $Q^{-1}$ as the matrix
$\tilde{W}_{1,P^{-1}}$ (resp. $\tilde{W}_{2,P^{-1}}$) from $(\ref{eq:p1actions})$ where
$b=s_2(a-s_1)$.
Moreover, for all choices of $s_1,s_2,a$ as above, 
$k\hat{\otimes}_A Q^\bullet$ is equal to the same complex $Z_\ell^\bullet$. 
By choosing suitable $k$-bases for the terms of $V_\ell^\bullet$, we see
that $V_\ell^\bullet=Z_\ell^\bullet$ in $C^-(k[[\Gamma]])$.
Thus for each choice of isomorphism
$\phi_Q:V_{\ell}^\bullet\to V_{\ell}^\bullet$ in $D^-(k[[\Gamma]])$, we obtain a quasi-lift 
$(Q^\bullet,\phi_Q)$ of $V_{\ell}^\bullet$ over $A$. 
Analyzing all isomorphisms in $\mathrm{Hom}_{D^-(k[[\Gamma]])}(V_\ell^\bullet,V_\ell^\bullet)$,
it follows that if $\phi_Q,\phi_Q':Z_\ell^\bullet=V_{\ell}^\bullet\to V_{\ell}^\bullet$ are isomorphisms
in $D^-(k[[\Gamma]])$, then $(Q^\bullet,\phi_Q)$ is isomorphic to $(Q^\bullet,\phi_Q')$ as quasi-lifts
of $V_\ell^\bullet$ over $A$.

Let $S_\ell=W[[t_1,t_2,t_3]]/(t_2t_3(t_3+2))$.
We obtain a two-term complex $U^\bullet$ in $C^-(S_\ell[[\Gamma]])$ from $Q^\bullet$
by replacing $A$ by $S_\ell$, 
$s_1$ by $1+t_1$, $a-s_1$ by $t_2$, $s_2$ by $1+t_3$ and $b$ by $(1+t_3)t_2$
in $(\ref{eq:ql})$, in $(\ref{eq:p0actions})$ and in $(\ref{eq:p1actions})$.
Let $\phi_U:k\hat{\otimes}_AU^\bullet
=Z_\ell^\bullet\to V_{\ell}^\bullet$ be a fixed isomorphism in $D^-(k[[\Gamma]])$.
Given a quasi-lift of $V_{\ell}^\bullet$ over $A$ which is isomorphic to 
$(Q^\bullet,\phi_Q)$ for $Q^\bullet$ as above,
it follows that the morphism
$\alpha:S_\ell \to A$ 
with $\alpha(t_1)=s_1-1$, $\alpha(t_2)=a-s_1$ and $\alpha(t_3)=s_2-1$ is a 
morphism in $\hat{\mathcal{C}}$ such that 
$(Q^\bullet,\phi_Q)$ is isomorphic to $(A\hat{\otimes}_{S_\ell,\alpha}U^\bullet,\phi_U)$ 
as quasi-lifts of $V_{\ell}^\bullet$ over $A$.
Because $\mathrm{max}(S_\ell)/(\mathrm{max}^2(S_\ell)+2S_\ell)$ has $k$-dimension $3$, it follows
from Lemma $\ref{lem:Ext1Vy}$ that $S_\ell$ is the versal deformation ring of $V_{\ell}^\bullet$.

For proflat quasi-lifts of $V_{\ell}^\bullet$ over a ring $A$ in $\hat{\mathcal{C}}$,
the only additional condition is $s_2^2=1$. It follows that the versal proflat deformation
ring of $V_{\ell}^\bullet$ is isomorphic to $W[[t_1,t_2,t_3]]/(t_3(t_3+2))$.
\end{proof}


\end{document}